\newtheorem{theorem}{Theorem}[section]
\newtheorem{lemma}[theorem]{Lemma}
\newtheorem{corollary}[theorem]{Corollary}
\newtheorem{proposition}[theorem]{Proposition}
\numberwithin{equation}{section}
\theoremstyle{definition}
\newtheorem{definition}[theorem]{Definition}
\newtheorem{remark}[theorem]{Remark}
\newcommand{\m}{\mathfrak{m}}
\newcommand{\bpm}{\begin{pmatrix}}
\newcommand{\epm}{\end{pmatrix}}
\newcommand{\fs}{\footnotesize}
\newcommand{\ZZ}{\mathbb{Z}}
\newcommand{\HH}{\mathbb{H}}
\newcommand{\CC}{\mathbb{C}}
\newcommand{\RR}{\mathbb{R}}
\newcommand{\NN}{\mathbb{N}}
\newcommand{\HHH}{\mathcal{H}}
\newcommand{\scs}{\scriptsize}
\newcommand{\sst}{\scriptstyle}
\newcommand{\G}[1]{\Gamma_6({#1})}
\newcommand{\PG}[1]{\mbox{P}\Gamma_6({#1})}
\newcommand{\emPG}[1]{\mbox{\em P}\Gamma_6({#1})}
\newcommand{\scsPG}[1]{\mbox{\scs P}\Gamma_6({#1})}
\newcommand{\Gammasechs}{\Gamma_6}
\newcommand{\PGammasechs}{\mbox{P}\Gamma_6}
\newcommand{\Gammazwei}{\Gamma_2}
\newcommand{\PGammazwei}{\mbox{P}\Gamma_2}
\newcommand{\Gaffzwei}{\mathcal{G}_2}
\newcommand{\Gaffzweihat}{\widehat{\mathcal{G}}_2}
\newcommand{\stab}{\mbox{Stab}}
\newcommand{\emstab}{\mbox{\em Stab}}
\newcommand{\Ezwei}{E[2]}
\newcommand{\emtrans}{\mbox{\em Trans}}
\newcommand{\slzwei}{\mbox{SL}_2}
\newcommand{\emslzwei}{\mbox{\em SL}_2}
\newcommand{\pslzwei}{\mbox{PSL}_2}
\newcommand{\empslzwei}{\mbox{\em PSL}_2}
\newcommand{\aff}{\mbox{Aff}}
\newcommand{\emaff}{\mbox{\em Aff}}
\newcommand{\A}{P}
\newcommand{\B}{Q}
\newcommand{\Cc}{R}
\newcommand{\D}{S}
\newcommand{\Cim}{\Cc^m_i}
\newcommand{\Aij}{\A^j_i}
\newcommand{\Bij}{\B^j_i}
\newcommand{\Cij}{\Cc^j_i}
\newcommand{\Dij}{\D^j_i}
\newcommand{\fhathat}{\hat{\hat{f}}}
\newcommand{\rmd}{\mbox{rm}}
\newcommand{\emrmd}{\mbox{\em rm}}
\newcommand{\univZstar}{\widetilde{Z^*}}
\newcommand{\deck}{\mbox{Deck}}
\newcommand{\emdeck}{\mbox{\em Deck}}
\newcommand{\cusps}{\mathcal{C}ps}
\newcommand{\thegroup}{\pm\Gamma_6(2k)}
\newcommand{\id}{\mbox{\footnotesize id}}
\newcommand{\proj}{\mathbb{P}^1}
\newcommand{\projppp}{{{\overbracket[0pt][-2pt]{\mathbb P}^{...}}}{} ^{1}}
\newcommand{\projpppp}{{{\overbracket[0pt][-2pt]{\mathbb P}^{....}}}{} ^{1}}
\newcommand{\into}{\hookrightarrow}
\newcommand{\PGamma}{\mbox{P}\Gamma}
\newcommand{\Qquer}{\overline{\QQ}}
\newcommand{\QQ}{\mathbb{Q}}
\begin{document}

\title{Explicit Teichm\"uller curves with complementary series}

\author{Carlos Matheus}
\address{Carlos Matheus: LAGA, Institut Galil\'ee, Universit\'e Paris 13, 99, av. Jean-Baptiste Cl\'ement, 93430 Villetaneuse, France}
\email{matheus@impa.br}
\urladdr{http://www.impa.br/~cmateus}
\thanks{C.M. was partially supported by Balzan research project of J. Palis.}
\author{Gabriela Weitze-Schmith\"usen}
\address{Gabriela Weitze-Schmith\"usen: Institute for Algebra and Geometry, Karlsruhe Institute of Technology, 
  76128 Karlsruhe, Germany}
\email{weitze-schmithuesen@kit.edu}
\urladdr{http://www.math.kit.edu/iag3/~schmithuesen/}
\date{\today}

\begin{abstract}
We construct an explicit family of arithmetic Teichm\"uller curves  $\mathcal{C}_{2k}$, $k\in\mathbb{N}$, supporting $\textrm{SL}(2,\mathbb{R})$-invariant probabilities $\mu_{2k}$ such that the associated $\textrm{SL}(2,\mathbb{R})$-representation on $L^2(\mathcal{C}_{2k}, \mu_{2k})$ has complementary series for every $k\geq 3$. Actually, the size of the spectral gap along this family goes to zero. In particular, the Teichm\"uller geodesic flow restricted to these explicit arithmetic Teichm\"uller curves $\mathcal{C}_{2k}$ has arbitrarily slow rate of exponential mixing. 
\end{abstract}
\maketitle

\section{Introduction}\label{intro}
Let $\mathcal{H}_g$ be the moduli space of unit area Abelian differentials on a genus $g\geq 1$ Riemann surface. This moduli space is naturally stratified by prescribing the list of orders of zeroes of Abelian differentials: more precisely, 
$$\mathcal{H}_g=\bigcup\limits_{\kappa=(k_1,\dots,k_\sigma)}\mathcal{H}(\kappa)$$
where $\mathcal{H}(\kappa)$ is the set of unit area Abelian differentials with zeroes of orders $k_1, \dots, k_{\sigma}$. Here, we have the constraint $\sum\limits_{j=1}^{\sigma}k_j=2g-2$ coming from Poincar\'e-Hopf formula. The terminology ``stratification'' here is justified by the fact that $\mathcal{H}(\kappa)$ is the subset of unit area Abelian differentials of the complex orbifold of complex dimension $2g+\sigma-1$ of Abelian differentials with list of orders of zeroes $\kappa$.  See~\cite{Masur1,Veech1,Veech2,Zorich} for further details. 

In general, the strata $\mathcal{H}(\kappa)$ are not connected but the complete classification of their connected components was performed by M.~Kontsevich and A.~Zorich~\cite{KZ}. As a by-product of~\cite{KZ}, we know that every stratum has 3 connected components at most (and they are distinguished by certain invariants).

The moduli space $\mathcal{H}_g$ is endowed with a natural $\textrm{SL}(2,\mathbb{R})$-action such that the action of the diagonal subgroup $g_t:=\textrm{diag}(e^t,e^{-t})$ corresponds to the so-called Teichm\"uller geodesic flow (see e.g. \cite{Zorich}). 

After the seminal works of H. Masur~\cite{Masur1} and W.~Veech~\cite{Veech1}, we know that any connected component $\mathcal{C}$ of a stratum $\mathcal{H}(\kappa)$ carries a unique $\textrm{SL}(2,\mathbb{R})$-invariant probability measure $\mu_{\mathcal{C}}$ which is absolutely continuous with respect to the Lebesgue measure in local (period) coordinates. Furthermore, this measure is ergodic and mixing with respect to the Teichm\"uller flow $g_t$. In the literature, this measure is sometimes called \emph{Masur-Veech measure}.

For Masur-Veech measures, A. Avila, S. Gou\"ezel and J.-C. Yoccoz~\cite{AGY} established that the Teichm\"uller flow is exponential mixing with respect to them. Also, using this exponential mixing result and M.~Ratner's work~\cite{Rt} on the relationship between rates of mixing of geodesic flows and \emph{spectral gap} property of $\textrm{SL}(2,\mathbb{R})$-representations, they were able to deduce that the $\textrm{SL}(2,\mathbb{R})$ representation $L^2(\mathcal{C}, \mu_{\mathcal{C}})$ has spectral gap. 

More recently, A.~Avila and S.~Gou\"ezel~\cite{AG} were able to extend the previous exponential mixing and spectral gap results to general \emph{affine} $\textrm{SL}(2,\mathbb{R})$-invariant measures.\footnote{By affine measure we mean that it is supported on a locally affine (on period coordinates) suborbifold and its density is locally constant in affine coordinates. Conjecturally, all $\textrm{SL}(2,\mathbb{R})$-invariant measures on $\mathcal{H}_g$ are affine, and, as it turns out, this conjecture was recently proved by A.~Eskin and M.~Mirzakhani~\cite{EsM}.} 

Once we know that there is spectral gap for these representations, a natural question concerns the existence of \emph{uniform} spectral gap. For Masur-Veech measures, this was informally conjectured by J.-C. Yoccoz (personal communication) in analogy with Selberg's conjecture~\cite{Sel}. On the other hand, as it was recently noticed by A. Avila, 
J.-C. Yoccoz and the first author during a conversation, one can use a recent work of J.~Ellenberg and D.~McReynolds~\cite{em} to produce $\textrm{SL}(2,\mathbb{R})$-invariant measures supported on the $\textrm{SL}(2,\mathbb{R})$-orbits of \emph{arithmetic Teichm\"uller curves} (i.e., \emph{square-tiled surfaces}) along the lines of Selberg's argument to construct non-congruence finite index subgroups $\Gamma$ of $\textrm{SL}(2,\mathbb{Z})$ with arbitrarily small spectral gap. We will outline this argument in Appendix~\ref{a.A}. 

However, it is not easy to use the previous argument to exhibit \emph{explicit} examples of arithmetic Teichm\"uller curves with arbitrarily small spectral gap. Indeed, as we're going to see in Appendix~\ref{a.A}, the basic idea to get arithmetic Teichm\"uller curves with arbitrarily small spectral gap is to appropriately choose a finite index subgroup $\Gamma_2(N)$ of the principal congruence subgroup $\Gamma_2$ of level 2 so that $\mathbb{H}/\Gamma_2(N)$ corresponds to $N$ copies of $\mathbb{H}/\Gamma_2$ arranged \emph{cyclically} in order to slow down the rate of mixing of the geodesic flow (since to go from the $0$th copy to the $[N/2]$th copy of $\mathbb{H}/\Gamma_2$ it takes a time $\sim N$). In this way, it is not hard to apply Ratner's work~\cite{Rt} to get a bound of the form $1\lesssim Ne^{-\sigma(N)\cdot N}$ where $\sigma(N)$ is the size of the spectral gap of $\mathbb{H}/\Gamma_2(N)$. Hence, we get that the size $\sigma(N)$ of the spectral gap decays as $\lesssim\ln(N)/N$ along the family $\mathbb{H}/\Gamma_2(N)$. Thus, we will be done once we can realize $\mathbb{H}/\Gamma_2(N)$ as an arithmetic Teichm\"uller curve, and, in fact, this is always the case by the work of J.~Ellenberg and D.~McReynolds~\cite{em}: the quotient $\mathbb{H}/\Gamma$ can be realized by an arithmetic Teichm\"uller curve whenever $\Gamma$ is a finite index subgroup of $\Gamma_2$ containing $\{\pm Id\}$ (such as $\Gamma_2(N)$). In principle, this could be made explicit, but one has to pay attention in two parts of the argument: firstly, one needs to derive explicit constants in Ratner's work (which is a tedious but straightforward work of bookkeeping constants); secondly, one needs rework J.~Ellenberg and D.~McReynolds article to the situation at hand (i.e., trying to realize $\mathbb{H}/\Gamma_2(N)$ as an arithmetic Teichm\"uller curve). In particular, since the spectral gap decays slowly ($\lesssim\ln(N)/N$) along the family $\mathbb{H}/\Gamma_2(N)$ and the Ellenberg-McReynolds construction involves taking several branched coverings, even exhibiting a single arithmetic Teichm\"uller curve with complementary series  demands a certain amount of effort. 

In this note, we propose an alternative way of constructing arithmetic Teichm\"uller curves with arbitrarily small spectral gap. Firstly, instead of getting a small spectral gap from slow of mixing of the geodesic flow, a sort of ``dynamical-geometrical'' estimate, we employ the so-called \emph{Buser inequality} to get small spectral gap from the Cheeger constant, a more ``geometrical'' constant measuring the ratio between the length of separating multicurves and the area of the regions bounded by these multicurves on the arithmetic Teichm\"uller curve. As a by-product of this procedure, we will have a family $\Gamma_6(2k)$ of finite index subgroups of $\Gamma_6$ (the level $6$ principal congruence subgroup) also obtained by a cyclic construction such that the size of the spectral gap of 
$\mathbb{H}/\Gamma_6(2k)$ decays as $\lesssim 1/k$ (where the implied constant can be computed effectively). Secondly, we combine some parts of Ellenberg-McReynolds methods~\cite{em} with the ones of the second author~\cite{Sc1} to explicitly describe a family of arithmetic Teich\-m\"uller curves birational
to a covering of $\mathbb{H}/\Gamma_6(2k)$ (that is, the \emph{Veech group} of the underlying square-tiled surface is a subgroup $\Gamma_6(2k)$). As a by-product of this discussion, we show the following result:

\begin{theorem}\label{t.A}
Suppose that $k\geq 3$.
\begin{enumerate}
\item[i)]
  For any origami whose Veech group $\Gamma$ is a subgroup of $\thegroup$, its
  Teichm\"uller curve exhibits complementary series and the spectral gap
  of the regular representation associated to $\HH/\Gamma$ is smaller than $1/k$.
\item[ii)]
  The Veech group of the origami $Z_k$ (defined in Definition~\ref{origamiz})
  of genus $48k+3$ and $192k$ squares 
  is contained in $\pm\Gamma_6(2k)$. In particular, its  Teichm\"uller curve
  $\mathcal{C}_{2k}$  exhibits complementary series and this family of origamis 
  gives an example that there is no uniform lower bound for the spectral gaps
  associated to Teichm\"uller curves. 
\end{enumerate} 
\end{theorem}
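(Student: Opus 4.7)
For part (i), the plan is to analyze the Cheeger constant of the cyclic tower $\HH/\G{2k} \to \HH/\Gammasechs$ geometrically. The subgroup $\G{2k} \leq \Gammasechs$ will be defined so that the quotient $\HH/\G{2k}$ consists of $2k$ copies of a fundamental region for $\Gammasechs$ arranged in a cyclic chain. Cutting this chain ``in the middle'' along a short separating curve $\gamma$, of hyperbolic length bounded uniformly in $k$, one obtains two subregions of area roughly $k\cdot\mathrm{area}(\HH/\Gammasechs)$, which produces a Cheeger constant bound $h(\HH/\G{2k}) \leq C/k$ with an effective constant $C$. Applying Buser's inequality in a form appropriate for finite-area hyperbolic surfaces yields an upper bound on $\lambda_1(\HH/\G{2k})$ decaying linearly in $1/k$; careful tracking of the constants should give both $\lambda_1(\HH/\G{2k}) < 1/k$ and $\lambda_1(\HH/\G{2k}) < 1/4$ for every $k \geq 3$, the latter being the condition for $L^2(\HH/\G{2k})$ to contain complementary series for the regular $\textrm{SL}(2,\mathbb{R})$-representation.

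To pass from $\HH/\G{2k}$ to an arbitrary origami $X$ whose Veech group $\Gamma$ is contained in $\thegroup$: the associated Teichm\"uller curve is, up to a finite map, the quotient $\HH/\Gamma$, and the covering $\HH/\Gamma \to \HH/\thegroup$ induces an isometric embedding of $L^2(\HH/\thegroup)$ into $L^2(\HH/\Gamma)$ as an $\textrm{SL}(2,\mathbb{R})$-invariant subrepresentation via pull-back of functions. Hence small Laplacian eigenvalues on the base lift to small eigenvalues on the cover, and both the complementary series and the $1/k$ spectral gap bound established for $\HH/\G{2k}$ transfer directly to the Teichm\"uller curve of $X$, completing part (i).

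For part (ii), the strategy is constructive: one writes down an explicit square-tiled surface $Z_k$, built as a tower of $2k$ ``blocks'' of a fixed base origami, whose combinatorics are rigged so that its Veech group is forced into $\thegroup$. The approach combines the techniques of the second author---controlling Veech groups of origamis via the action of the affine group on horizontal and vertical cylinder decompositions---with the Ellenberg--McReynolds method for realizing finite-index subgroups of $\textrm{SL}(2,\mathbb{Z})$ as Veech groups of coverings; the genus count $48k+3$ and square count $192k$ should then follow from an Euler characteristic computation on the resulting covering. I expect the main obstacle to lie precisely in certifying that the Veech group of $Z_k$ is contained in $\thegroup$ (and not merely in some proper overgroup inside $\Gammasechs$): this requires careful bookkeeping of how the generators of $\thegroup$ act on the cylinder data of $Z_k$, together with identification of combinatorial invariants that distinguish $Z_k$ from its translates under elements of $\Gammasechs \setminus \G{2k}$.
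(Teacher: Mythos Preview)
Your outline for part (i) is essentially the paper's argument. The paper cuts the cyclic cover $\HH/\PG{2k}$ along two copies of a fixed geodesic $c_1$ placed in the $0$th and $k$th sheets, computes $\ell(c_1)=2\,\textrm{arc cosh}(17)$ and $\textrm{area}(\HH/\PGammasechs)=24\pi$ explicitly, and feeds these into Buser's inequality to obtain $\lambda_1(\HH/\PG{2k})<1/(2k)<1/4$ for $k\geq3$; the transfer to finite-index subgroups is exactly the monotonicity of $\lambda_1$ under finite covers that you describe, and the spectral-gap bound follows from $\sigma=1-\sqrt{1-4\lambda_1}$.

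For part (ii) your architecture is right --- a tower of origamis assembled with Ellenberg--McReynolds ideas --- but the distinguishing invariant the paper uses is \emph{not} cylinder-decomposition data. The paper builds a tower $Z_k\xrightarrow{r}Y\xrightarrow{q}X\xrightarrow{p}E[2]$ where $X$ is a fixed degree-$12$ origami over $E[2]$, $Y$ is the $2k$-cyclic cover of $X$, and $Z_k$ is a further degree-$2$ cover of $Y$ ramified at exactly four hand-picked vertices. The criterion that forces the Veech group into $\thegroup$ (Proposition~\ref{mainprop}) is stated entirely in terms of \emph{ramification data}: one arranges $r$ so that the ramification multisets of $p\circ q\circ r$ over $\A,\B,\Cc,\D\in E[2]$, of $q\circ r$ over the $\A_i,\B_i\in X$, and of $r$ over the distinguished vertices $\A^j_1,\Cc^1_1\in Y$ are pairwise distinct in a prescribed pattern. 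Any affine homeomorphism of $Z_k$ preserves these multisets, hence descends through the tower and is forced to fix $\Cc^1_1$; the substantive computation (Lemmas~\ref{actionx} and~\ref{actiony}) identifies the action of a distinguished subgroup $\Gaffzweihat\subset\aff(Y)$ on the $\Cc$-fibre with the coset action of $\PGammazwei$ on $\PGammazwei/G_6(2k)$ (via an explicit automorphism $\gamma$ of $\PGammazwei$), so that the stabiliser of $\Cc^1_1$ has derivative exactly $\PG{2k}$. Cylinder decompositions appear only incidentally, to write down the affine maps $f_T,f_L$. So your proposal has the right shape but has not located the mechanism --- ramification data at marked vertices --- that actually pins down the Veech group; without it the ``combinatorial invariants'' step remains a hope rather than an argument.
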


We organise this note as follows. In Section~\ref{s.CB}, we present the cyclic construction 
leading to the family of finite index subgroups $\Gamma_6(2k)$, $k\in\mathbb{N}$, 
of $\Gamma_6$. We show that the size of the spectral gap of $\mathbb{H}/\Gamma_6(2k)$ 
decays as $\lesssim 1/k$. In a nutshell, we consider the genus 1 curve $\mathbb{H}/\Gamma_6$, 
cut along an appropriate closed geodesic, and glue cyclically $2k$ copies of $\mathbb{H}/\Gamma_6$. 
This will produce the desired family $\mathbb{H}/\Gamma_6(2k)$ because the multicurves consisting of the 
two copies of our closed geodesic at the $0$th and $k$th copies of $\mathbb{H}/\Gamma_6$ divide 
$\mathbb{H}/\Gamma_6(2k)$ into two parts of equal area, see Figure~\ref{f.1}. 
Thus, since the area of $\mathbb{H}/\Gamma_6(2k)$ grows linearly with $k$ while the length of the 
multicurves remains bounded, we'll see that the Cheeger constant decays as $\lesssim 1/k$, and, 
a fortiori, the size of the spectral gap decays as $\lesssim 1/k$ by Buser inequality. 
Proposition~\ref{p.A} and Remark~\ref{gap} then show i) of Theorem~\ref{t.A}.
In Section~\ref{s.EMcRSch}, we describe an explicit family of square-tiled surfaces whose Veech group is 
$\textrm{SL}(2,\mathbb{Z})$. Then, we give a condition that coverings of them have a Veech group
which is contained in $\thegroup$, i.e., its Teichm\"uller curve is birational to a 
covering of $\mathbb{H}/\Gamma_6(2k)$. These two sections can be read independently from each other. 
Finally, in the last section, we prove ii) of Theorem~\ref{t.A} by constructing explicit 
origamis which satisfy the given conditions. In particular, as our ``smallest'' example, we construct
an origami with 576 squares whose Veech group is a subgroup of $\Gamma_6(6)$ and, 
\emph{a fortiori}, whose Teichm\"uller curve exhibits complementary series (see Corollary~\ref{c.origamiZ3}).

\medskip
\noindent\textbf{A word on notation.} During our discussion, sometimes we will need to shift our considerations from $\textrm{SL}_2(\mathbb{Z})$ to $\pslzwei(\ZZ)=\textrm{SL}_2(\mathbb{Z})/\{\pm\textrm{Id}\}$ (and vice-versa). So, in order to avoid potential confusion, each time we have a subgroup $\Gamma$ of $\textrm{SL}_2(\mathbb{Z})$, we will denote by $\textrm{P}\Gamma$ the image of $\Gamma$ under the natural map $\textrm{SL}_2(\mathbb{Z})\to\pslzwei(\ZZ)$.

\medskip
\noindent\textbf{A word on background.} In the sequel, we will assume some familiarity with the notions of \emph{origamis} (i.e., \emph{square-tiled surfaces}), {\em Veech groups} and {\em affine diffeomorphisms}. The reader who wishes more information on these topics may consult e.g. the survey~\cite{HuSc} of P. Hubert and T. Schmidt for a nice account on the subject. Note in particular that in this article the Veech 
group is a subgroup of $\slzwei(\RR)$ and we call its image in $\pslzwei(\RR)$
the {\em projective Veech group}. We denote both, the derivative map from the affine group
to $\slzwei(\RR)$ and its composition with the projection to $\pslzwei(\RR)$, by D and call
also the latter map {\em derivative map}.

\section{Subgroups $\PGammasechs(2k)$ of $\PGammasechs$ with complementary series}\label{s.CB}

Recall that $\slzwei(\ZZ)$ is generated by $T$ and $L$ with 
\[T = \bpm 1 & 1 \\ 0&1 \epm \mbox{ and } L = \bpm 1 & 0\\ 1 & 1 \epm.\]
We will also denote their images in $\pslzwei(\ZZ)$ by $T$ and $L$.  
Recall furthermore that $\PGammazwei$, the image of the principal congruence\footnote{The \emph{principal congruence subgroup} 
$\Gamma_N\subset \slzwei(\ZZ)$ of \emph{level} $N\in\mathbb{N}$ is the kernel of the homomorphism $\slzwei(\ZZ)\to\slzwei(\ZZ/N\ZZ)$ given by reduction modulo $N$ of entries. See e.g. \cite{Bergeron} for more details on these important subgroups of $\slzwei(\ZZ)$.} 
group $\Gammazwei$ of level $2$ in $\pslzwei(\ZZ)$,
is generated by $x = T^2$ and $y = L^2$.
Figure $\ref{cayleygraph}$ below shows the Cayley graph of $\PGammazwei/\PGammasechs$ with
respect to their images, where $\PGammasechs$ is the image 
of the principal congruence group $\Gammasechs$ of level $6$ in $\pslzwei(\ZZ)$.
The Cayley graph is embedded into a genus 1 surface.
\begin{figure}[hb!]
\includegraphics[scale=0.5]{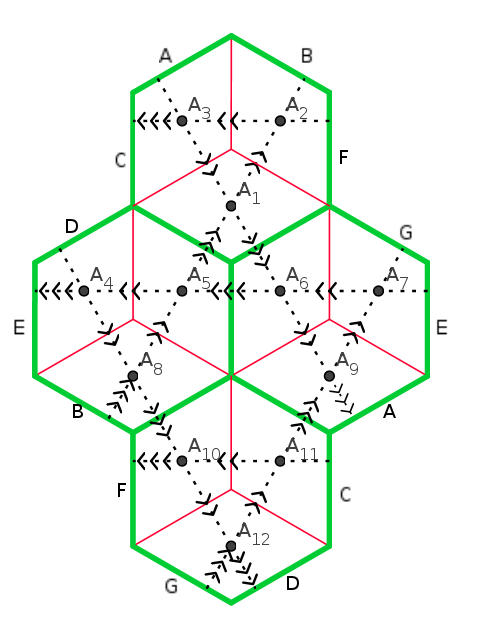}
\caption{ The Cayley graph of $\PGammazwei/\PGammasechs$ with respect to
the generators $T^2$ and $L^2$ drawn on the surface $\HH/\PGammazwei$. 
(Edges with same labels are glued.)\label{cayleygraph} 
Arrows with 
double marking denote multiplication with $x = T^2$,
arrows with triple marking multiplication with
$y = L^2$. The matrices $A_1$, \ldots, $A_{12}$
are given in Remark~\ref{matrices}.}
\end{figure}

\subsection{The group $\PG{2k}$}\label{pg}

In this subsection we present one of our main actors, the group $\PG{2k}$,
and another group (namely, $G_6(2k)$) closely related to it that will be crucial 
for our subsequent constructions.
For this we give a closer look at the group $\PGammasechs$.
As we mentioned above, the two elements
\begin{equation}\label{xy}
  x = \bpm 1 & 2\\0&1\epm \mbox{ and } y = \bpm 1 & 0\\ 2&1 \epm
\end{equation}
generate $\PGammazwei$. They even freely generate it, i.e. 
$\PGammazwei \cong F(x,y)$, the free group in the two generators $x$ and $y$. 
Therefore the action of $\PGammazwei$ on the upper half plane
as Fuchsian group is free and we may consider $\PGammazwei$
as well as the fundamental group of $\HH/\PGammazwei$.
Next, we note that the exact sequence:
\[
  1 \to \PGammasechs \to \pslzwei(\ZZ) \to \pslzwei(\ZZ/6\ZZ) 
  \cong \pslzwei(\ZZ/2\ZZ) \times \pslzwei(\ZZ/3\ZZ)  \to 1
\]
restricts to the exact sequence
\begin{equation}\label{quotient}
  1 \to  \PGammasechs \to \PGammazwei \to {1} \times \pslzwei(\ZZ/3\ZZ) \to 1
\end{equation}
Thus $\PGammasechs$ is normal in $\PGammazwei$ of index 12 and the quotient is isomorphic to 
$\pslzwei(\ZZ/3\ZZ)$. 
\begin{remark}\label{matrices}
The following matrices are a system of 
coset representatives of $\PGammasechs$ in $\PGammazwei$ 
(compare Figure~\ref{cayleygraph}).
\[\begin{array}{l}
A_1 = \mbox{id},\;\;  
A_2  = x \equiv \begin{pmatrix}1&2\\0&1\end{pmatrix},\;
A_3 = x^2 \equiv \begin{pmatrix}1&4\\0&1\end{pmatrix},\\[3mm]
A_4 = y^{-1}x \equiv \begin{pmatrix}1&2\\4&3\end{pmatrix},\;\;
A_5 = y^{-1} \equiv \begin{pmatrix}1&0\\4&1\end{pmatrix} ,\;\;
A_6 = y \equiv \begin{pmatrix}1&0\\2&1\end{pmatrix},\\[2mm]
A_7 = yx^{-1} \equiv  \begin{pmatrix}1&4\\2&3\end{pmatrix},\;\;
A_8 = y^{-1}x^{-1} \cong \begin{pmatrix}1&4\\4&5\end{pmatrix},\\[3mm]
A_9 = yx \cong \begin{pmatrix}1&2\\2&5\end{pmatrix},\;\;
A_{10} = y^{-1}x^{-1}y \cong \begin{pmatrix}3&2\\4&1\end{pmatrix},\\[3mm]
A_{11} = yxy^{-1} \cong \begin{pmatrix}3&2\\4&5\end{pmatrix},\;\;
A_{12} = yxy^{-1}x^{-1 }\equiv  \begin{pmatrix}3&2\\4&3\end{pmatrix} 
\end{array}
\]
\end{remark}

Recall that $\HH/\PGammazwei$ has genus 0 and three cusps, and that one can choose
a fundamental domain which is a hyperbolic geodesic quadrilateral with all four
vertices on the boundary of $\HH$ and such that the edges are paired into neighboured
edges which are identified (such a fundamental domain appears under the name $\mathcal{F}_2$ in Appendix~\ref{a.A}). Since $\PGammasechs$ is a subgroup of $\PGammazwei$
and $\PGammazwei$ acts freely, we obtain an unramified covering 
$\HH/\PGammasechs \to \HH/\PGammazwei$ of degree 12. We have indicated
this covering in Figure~\ref{cayleygraph}. The whole surface is $\HH/\PGammasechs$.
Edges labelled by the same letter are identified by $\PGammasechs$.
All vertices are cusps (also the ones inside the polygon!). Altogether
the surface has 12 cusps and its genus is 1. 
The tessellation into quadrilaterals shows the covering map onto 
$\HH/\PGammazwei$. The thicker edges are all mapped to the same edge
on  $\HH/\PGammazwei$ and the same holds for the thinner edges 

Observe that the fundamental group of $\HH/\PGammasechs$ 
is generated by the directed closed paths indicated in Figure~\ref{generators} below, 
one for each
edge (we call the closed path as well as the corresponding element in
$\PGammazwei$  by the letter that is labelling the edge) and by
positively oriented simple loops around the cusps, one for
each vertex inside the polygon (which we denote by the same letter as the vertex).
Altogether $\PGammasechs$ is generated by the elements $A$, \ldots, $G$
and the loops $L_1$, \ldots, $L_{6}$ (see Lemma~\ref{change} for the exact
definition of the loops)
and it is isomorphic to the
free group $F_{13}$.

\begin{figure}[hb!]
  \includegraphics[scale=0.51]{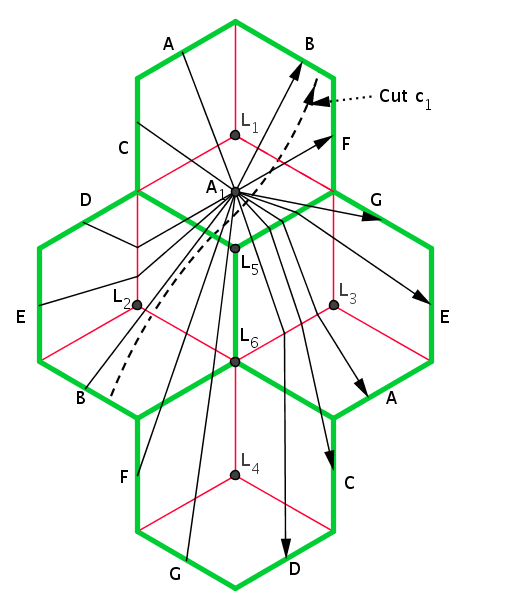}
  \caption
      {
	The generators $A$, \ldots, $G$ of $\pi_1(\HH/\Gammasechs)$. 
	Crossing a thin edge
	counter clockwise is multiplication by $x$;  crossing
	a thick edge clockwise is multiplication by $y$ 
	(compare explanation on p.~\pageref{orientation}).
      }
  \label{generators}
\end{figure}

Furthermore, notice that one nicely obtains
the Cayley graph of $\PGammazwei/\PGammasechs$ embedded
on $\HH/\PGammazwei$, see Figure~\ref{cayleygraph}.
It
is the dual graph to the tessellated surface: each quadrilateral
represents one vertex, i.e. one coset. Two vertices are
connected by an edge if and only if they share a common edge.
For simpler notations
let us choose an orientation on the edges: each edge connects
a vertex with all emanating edges of the same thickness with 
a vertex that is adjacent to edges of different thickness. Choose 
the orientation of an edge from the uni-thickness vertex to
the mixed vertex.
Crossing a thin edge from right to the left then corresponds to 
multiplication by $x$. Crossing a thick edge from left to the
right corresponds to multiplication by $y$, compare 
Figure~\ref{cayleygraph}.\label{orientation}

We now consider a cyclic cover of degree $2k$
of $\HH/\PGammasechs$:
we cut $\HH/\PGammasechs$ along the simple closed path $c_1$ 
crossing the edge $B$ indicated
in Figure~\ref{generators}, take $2k$ copies of this
slitted surface 
and reglue them in a cyclic order.\footnote{Here, we started with $\HH/\PGammasechs$ instead of $\HH/\PGammazwei$ because an efficient usage of Buser inequality (to detect complementary series for cyclic covers) depends on the fact that we can select a non-separating loop $c_1$ on our initial surface. Of course, such a choice of $c_1$ is possible on the genus $1$ surface $\HH/\PGammasechs$ but not on the genus $0$ surface $\HH/\PGammazwei$.} See Figure~\ref{f.1} below for a schematic picture. The fundamental 
group of this covering will be the group $\PG{2k}$.
We give a definition of this group using the monodromy of the covering.

\begin{figure}[h!]
\includegraphics[scale=0.6]{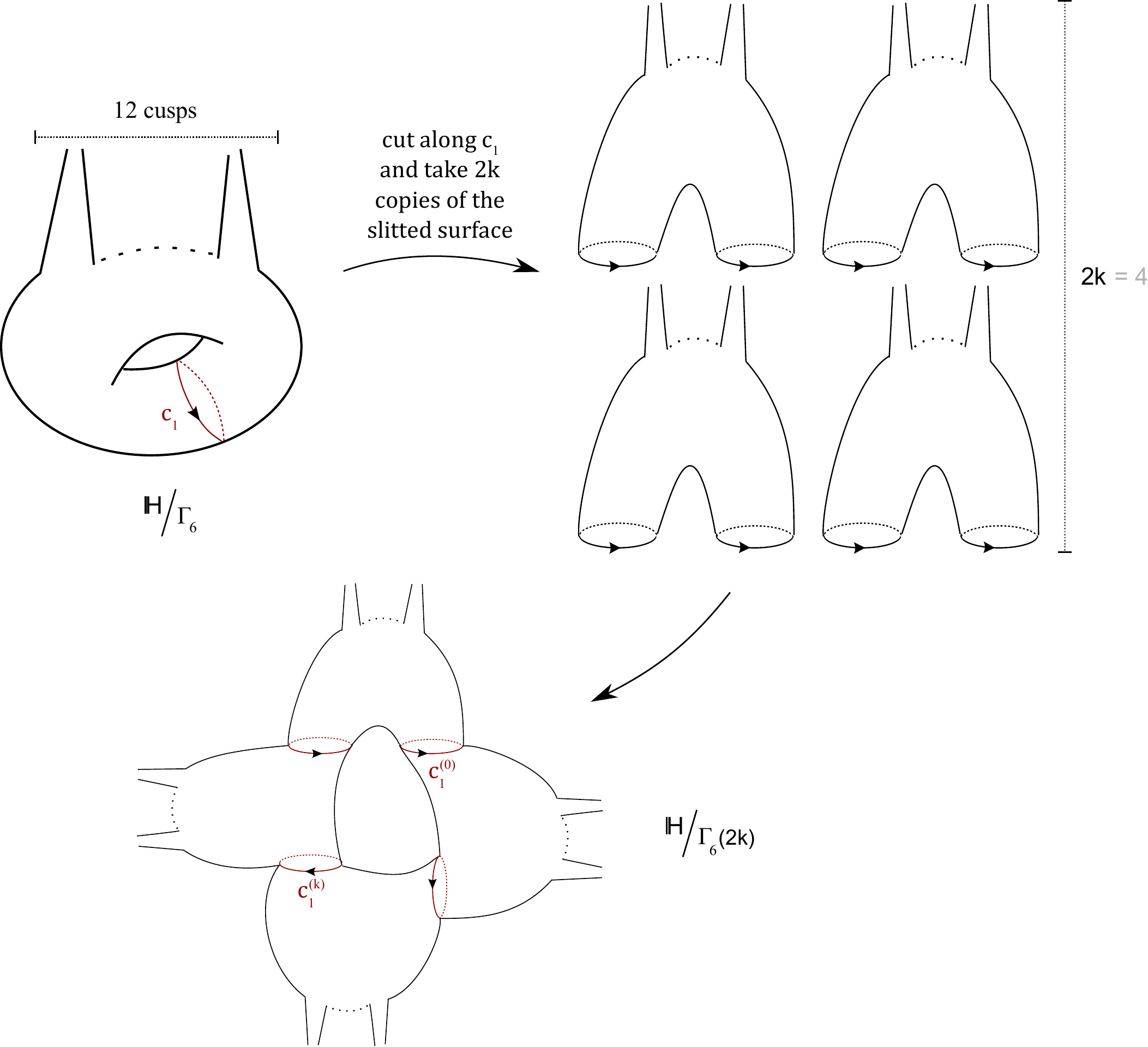}
\caption{Schematic description of $\mathbb{H}/\PG{2k}$}\label{f.1}
\end{figure}

\begin{definition}\label{defpg}
Let $m: \PGammasechs \to \ZZ/(2k\ZZ)$ be the group homomorphism
defined by:
\[
  \begin{array}{l}
    A \mapsto 1,\ B \mapsto 0,\ C \mapsto 1, 
    D \mapsto 1,\ E \mapsto 1,\  F \mapsto 0,\ 
    G \mapsto 0,\\
    L_i \mapsto 0 \mbox{ for } i \in \{1, \ldots, 6\}
  \end{array}
\]
Define $\PG{2k}$ to be the kernel of $m$.
In particular $\PG{2k}$ is a normal subgroup
of index $2k$ in $\PGammasechs$ and the quotient is 
the cyclic group $\ZZ/(2k\ZZ)$. Denote its
preimage in $\slzwei(\ZZ)$ by $\pm \Gammasechs(2k)$.
\end{definition}

Observe that the map $m$ defined in Definition~\ref{defpg}
assigns each element in the fundamental group of  $\HH/\PGammasechs$
the oriented intersection number with the curve $c_1$ modulo $2k$.\\

One of the main goals of this note is to obtain origamis whose Veech groups
are contained in $\PG{2k}$. However for the proof we will for technical reasons
mainly work
with another subgroup $G_6(2k)$ of $\PGammasechs$ (which turns out 
to be the image of $\PG{2k}$ under an isomorphism $\gamma$ of $\PGammasechs$) defined as follows.

\begin{definition}\label{defpg-1}
Let $m_2: \PGammasechs \to \ZZ/(2k\ZZ)$ be the group homomorphism
defined by: 
\[
  \begin{array}{l}
    A \mapsto 1,\ B \mapsto -1,\ C \mapsto 1,\ 
    D \mapsto 1,\ E \mapsto 0,\  F \mapsto -1,\ 
    G \mapsto -1,\\ 
    L_i \mapsto 0\ \mbox{ for } i \in \{1, \ldots, 6\}
  \end{array}
\]
Define $G_6(2k)$ to be the kernel of $m_2$.
In particular $G_6(2k)$ is again a normal subgroup
of index $2k$ in $\PGammasechs$ and the quotient is 
the cyclic group $\ZZ/(2k\ZZ)$. 
\end{definition}

One directly observes from Figure~\ref{cuttwo} below 
that $m_2$ gives the oriented intersection number
modulo $2k$
with the simple closed curve $c_2$ crossing the
edges $D$ and $G$ shown in 
the figure.

\begin{lemma}\label{change}
Let $\gamma$ be the automorphism of $\PGammazwei \cong F_2$ defined by
\[\gamma: x \mapsto y,\ y \mapsto x^{-1}y\]
Then $G_6(2k) = \gamma(\PG{2k})$.
\end{lemma}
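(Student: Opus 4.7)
The plan is to recast the lemma as an identity of two homomorphisms $\PGammasechs\to\ZZ/(2k\ZZ)$ and verify it on a generating set. As a preliminary I would check the (unstated) fact that $\gamma$ restricts to an automorphism of $\PGammasechs$. The inverse of $\gamma$ in $\mathrm{Aut}(F(x,y))=\mathrm{Aut}(\PGammazwei)$ is evidently $\gamma^{-1}\colon x\mapsto xy^{-1},\ y\mapsto x$. To see $\gamma(\PGammasechs)=\PGammasechs$, I would use the exact sequence \pref{quotient}: it suffices to show that $\gamma$ descends to a well-defined automorphism of $\PGammazwei/\PGammasechs\cong\pslzwei(\ZZ/3\ZZ)\cong A_4$. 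A short matrix calculation modulo $3$ confirms this: both $\bar y$ and $\bar x^{-1}\bar y$ have order $3$, their product has order $2$, and they do not commute (so they generate $A_4$ rather than a cyclic subgroup). Since $A_4$ admits the presentation $\langle a,b\mid a^3,b^3,(ab)^2\rangle$, the assignment $\bar x\mapsto\bar y,\ \bar y\mapsto\bar x^{-1}\bar y$ defines an automorphism of $A_4$.

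Once $\gamma$ restricts to an automorphism of $\PGammasechs$, the equality $\gamma(\PG{2k})=G_6(2k)$ becomes the identity $m\circ\gamma^{-1}=m_2$ of homomorphisms $\PGammasechs\to\ZZ/(2k\ZZ)$, which I would verify on the $13$ free generators $A,B,C,D,E,F,G,L_1,\ldots,L_6$ of $\PGammasechs$ supplied by Figure~\ref{generators}. The six cusp loops $L_i$ are handled en bloc: a brief calculation shows that $\gamma$ cyclically permutes the three peripheral conjugacy classes $[x],[y],[xy^{-1}]$ of $\PGammazwei$ (indeed $\gamma(x)=y$, $\gamma(y)=x^{-1}y$, $\gamma(xy^{-1})=x$, and $x(x^{-1}y)x^{-1}=yx^{-1}=(xy^{-1})^{-1}$ shows $[x^{-1}y]=[xy^{-1}]$). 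So $\gamma$ is induced by a homeomorphism of the thrice-punctured sphere $\HH/\PGammazwei$ which, preserving $\PGammasechs$, lifts to a homeomorphism $f$ of $\HH/\PGammasechs$ permuting its $12$ cusps. Since $m$ and $m_2$ are oriented intersection numbers with simple closed curves that may be taken disjoint from every cusp neighbourhood, both vanish on all parabolics; in particular $m(\gamma^{-1}(L_i))=0=m_2(L_i)$. For each of the seven edge-generators $\alpha\in\{A,\ldots,G\}$ one then reads off its expression as a word in $x,y$ from Figure~\ref{generators}, applies $\gamma^{-1}$, rewrites the result in the thirteen generators of $\PGammasechs$, and compares $m$ of that expression with the value $m_2(\alpha)$ prescribed in Definition~\ref{defpg-1}.

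The main obstacle lies in this last bookkeeping step: the generators $A,\ldots,G$ are defined pictorially via edges of the $12$-fold cover $\HH/\PGammasechs\to\HH/\PGammazwei$, so re-expressing a free-group word $\gamma^{-1}(\alpha)$ in them requires carefully tracing paths through the Cayley graph of $\PGammazwei/\PGammasechs$ using the orientation conventions on p.~\pageref{orientation}. A conceptually cleaner alternative, worth attempting first, is to strengthen the homeomorphism $f$ of the previous paragraph so that it additionally carries the curve $c_1$ to a curve isotopic to $c_2$; by the remark just after Definitions~\ref{defpg} and~\ref{defpg-1} that $m$ and $m_2$ are precisely the oriented intersection numbers with $c_1$ and $c_2$, this would give $m\circ\gamma^{-1}=m_2$ at once from the topological invariance of intersection number, bypassing all explicit manipulation of words.
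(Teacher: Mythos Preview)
Your approach is essentially the paper's: apply $\gamma^{-1}$ to the thirteen free generators of $\PGammasechs$, re-express each image in those same generators, and compare the two homomorphisms to $\ZZ/(2k\ZZ)$. The paper does all thirteen computations by hand; your en-bloc treatment of the $L_i$ via parabolics and your $A_4$-presentation argument for $\gamma(\PGammasechs)=\PGammasechs$ are cleaner shortcuts, but the endgame on $A,\ldots,G$ is the same explicit bookkeeping through the Cayley graph.

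One caveat: the identity you will actually obtain is $m\circ\gamma^{-1}=-m_2$, not $m\circ\gamma^{-1}=m_2$. This is harmless since $\ker(-m_2)=\ker(m_2)$, and the paper concludes in exactly that way, but you should anticipate the sign so the seven checks do not look wrong when you carry them out. Your proposed alternative---showing that the homeomorphism induced by $\gamma$ carries $c_1$ to a curve isotopic to $c_2$ and invoking invariance of intersection number---is precisely the argument the paper sketches in the Remark immediately following its proof.
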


\begin{proof}
First, observe that $\gamma$ (considered as an automorphism of $\PGammazwei$)
restricts to an automorphism of $\PGammasechs$. To see this, we write
the generators of $\Gammasechs$ in terms of $x$ and $y$. This
can again be read off from Figure~\ref{generators} (we do some choices
for the loops $L_1$, \ldots, $L_6$ here):
\begin{equation}
  \begin{array}{l}
    A = yxyx,\ B = xyxy,\ C = yxy^{-2}x,\ 
    D = yxy^{-1}x^{-1}yx^{-1}y,\\[1mm]
    E = yx^{-1}y^{-1}x^{-1}y,\ 
    F = xy^{-2}xy, G = yx^{-1}yx^{-1}y^{-1}xy\\[1mm]
    L_1 = x^3,\ L_2 = y^{-1}x^{3}y,\ L_3 = yx^3y^{-1},\ 
    L_4 = y^{-1}x^{-1}yx^3y^{-1}xy,\\[1mm]
    L_5 = y^{-3},\ 
    L_6 = y^{-1}(x^{-1}y)^3y 
  \end{array}
\end{equation}
Now we apply $\gamma^{-1}:  x \mapsto xy^{-1}, y \mapsto x$ to them and obtain:
\[
   \begin{array}{l}
     \gamma^{-1}(A) = x^2y^{-1}x^2y^{-1} = L_1A^{-1}L_3,\\[1mm]
     \gamma^{-1}(B) = xy^{-1}x^2y^{-1}x = FL_4L_6L_5C,\\[1mm]
     \gamma^{-1}(C) = x^2y^{-1}x^{-1}y^{-1} = L_1A^{-1},  \quad\\[1mm]
     \gamma^{-1}(D) =  x^2y^{-1}x^{-1}y^2 = L_1A^{-1}L_5^{-1},\\[1mm]
     \gamma^{-1}(E) = xyx^{-2}y = BL_2^{-1}, \quad
     \gamma^{-1}(F) = xy^{-1}x^{-1}y^{-1}x = FL_6L_5C,\\[1mm]
     \gamma^{-1}(G) = xy^2x^{-1}y^{-1}x = BL_6L_5C,  \quad
     \gamma^{-1}(L_1) = (xy^{-1})^3 = FG^{-1},\\[1mm]
     \gamma^{-1}(L_2) = (y^{-1}x)^3 = D^{-1}C,\\[1mm]
     \gamma^{-1}(L_3) = x(xy^{-1})^3x^{-1} = L_1A^{-1}L_3EL_2B^{-1}\\[1mm]
     \gamma^{-1}(L_4) = x^{-1}y(xy^{-1})^3y^{-1}x = C^{-1}L_5^{-1}L_6^{-1}L_5C,  \quad
     \gamma^{-1}(L_5) = x^{-3} = L_1^{-1}\\[1mm]
     \gamma^{-1}(L_6) = x^{-1}y^3x = C^{-1}A \\
   \end{array} 
\]
Observe now that for each generator $X$ we have $ m(\gamma^{-1}(X)) = -m_2(X)$.
We thus obtain:
\begin{eqnarray*}
G_6(2k) &=& \mbox{kernel}(m_2) = \mbox{kernel}( -m_2) =\mbox{kernel}(m \circ \gamma^{-1})
  \\ &=& \gamma(\mbox{kernel}(m)) = \gamma(\PG{2k}).
\end{eqnarray*}
\end{proof}

\begin{remark} Alternatively, for the proof of Lemma \ref{change} one could
  check that $\gamma$ preserves loops, 
  therefore induces a homeomorphism of the surface by the
  Dehn-Nielsen Theorem for punctured surfaces and that this one
  maps $c_1$ to $c_2$.
\end{remark}

For later usage, we want to further describe
the action of $\PGammazwei$ on the cosets
of $G_6(2k)$ in $\PGammazwei$. Denote in the following
by $\mathcal{C}(H:U)$ the set of cosets $U\cdot h$ ($h \in H$)
of a subgroup $U$ in a group $H$.
Firstly, we use the system of coset representatives $A_1$, \ldots, $A_{12}$
of $\PGammasechs$ in $\PGammazwei$ defined in Remark~\ref{matrices}.
Secondly, for each $A_i$ we define a drift $j_i$
as follows:
\[
    \begin{array}{l}
      j_1 = 0, j_2 = 0, j_3 = 0, j_4 = 0, j_5 = 0, 
      j_6 = 0,\\[1mm] 
      j_7 = 1, j_8 = 1, j_9 = 1, j_{10} = 1,
      j_{11} = 1, j_{12} = 1 
    \end{array}
\]

\begin{remark}\label{identify}
We fix the following identification 
between $\mathcal{C}(\PGammazwei:G_6(2k))$ and
$\mathcal{C}(\PGammazwei:\PGammasechs) \times \ZZ/(2k\ZZ)$:
For $A \in \PGammazwei$,
write $A = A'\cdot A_i$ with $A_i$ the coset representative
of $A$ from  Remark~\ref{matrices}, $A' \in \PGammasechs$ 
and define $\delta_{A} = m_2(A') + j_i$.  Then we define the bijection:
\[
   \mathcal{C}(\PGammazwei:G_6(2k)) \to 
   \mathcal{C}(\PGammazwei:\PGammasechs) \times \ZZ/(2k\ZZ),\;\; 
   G_6(2k)\cdot A \mapsto (\PGammasechs\cdot A, \delta_{A})  
\]
For easier notation, we will label the elements in $\mathcal{C}(\PGammazwei:G_6(2k))$
by the corresponding pair in $\{1, \ldots, 12\} \times \ZZ/(2k\ZZ)$:
\[
  G_6(2k)\cdot A \mapsto (i,\delta_A) \mbox{ with $i$ the $i$ from the $A_i$ above 
  and $\delta_{A}$ as above.}
\]
\end{remark}

Observe that the choice of the coset representatives $A_1$, \ldots, $A_{12}$
above corresponds to the choice of fixed paths on $\HH/\PGammasechs$
between the midpoint of the quadrilateral corresponding to $A_1$
and the midpoint  of the quadrilateral corresponding to the respective
$A_i$, see Figure~\ref{cuttwo}.
The drift $\delta_A$ for some $A = A'\cdot A_i$ in $\PGammazwei$
is then the intersection number of
the path corresponding to $A'\cdot A_i$ with $c_2$.
Crossing  $c_2$ from left to right leads one copy higher in our $2k$ copies.

\begin{figure}[h1]
  \includegraphics[scale = .5]{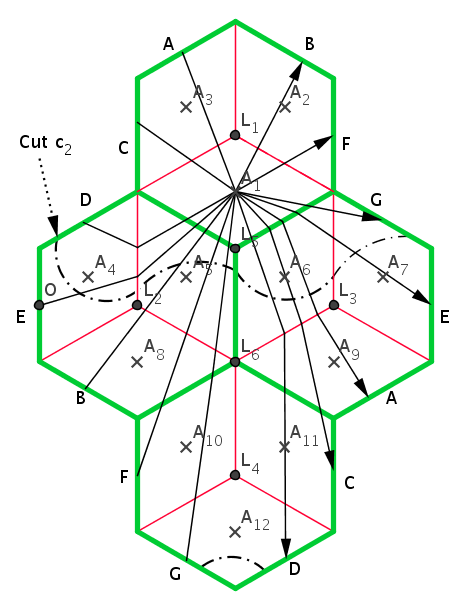}
  \caption{The second cut on the surface $\HH/\PGammasechs$. 
  Preimages of the base point of 
    $\HH/\PGammazwei$ are indicated by little crosses.
    	Crossing a thin edge
	counter clockwise is multiplication by $x$;  crossing
	a thick edge clockwise is multiplication by $y$ 
	(compare explanation on p.~\pageref{orientation}).
    \label{cuttwo}
  }
\end{figure}

In particular, we get the following proposition:
\begin{proposition}\label{actiononcosets}
Identify by Remark~\ref{identify} the set of cosets $\mathcal{C}(\PGammazwei:G_6(2k))$ 
with $\{1, \ldots, 12\}\times \ZZ/(2k\ZZ)$.
The action of $\PGammazwei$ on $\mathcal{C}(\PGammazwei:G_6(2k))$ 
by right multiplication
is then given by the permutations:
\[
   \begin{array}{ll}
     x:& ((1,j)\ (2,j)\ (3,j))\circ ((4,j)\ (8,j+1)\ (5,j+1)) \circ\\ 
       &  ((6,j)\ (9,j+1)\ (7,j+1))\circ ((10,j)\ (12,j)\ (11,j))\\
     y:&  ((1,j)\ (6,j)\ (5,j+1)) \circ ((2,j)\ (8,j)\ (10,j)) \circ \\
       &  ((3,j)\ (11,j)\ (9,j)) \circ ((4,j)\ (7,j+1)\ (12,j))
   \end{array}  
\]
\end{proposition}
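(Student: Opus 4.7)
The plan is a direct coset-by-coset computation, reducing the claim to checking $24$ small identities --- one per coset representative $A_i$ ($i\in\{1,\ldots,12\}$) and per generator $g\in\{x,y\}$. Under the identification from Remark~\ref{identify}, a coset $G_6(2k)\cdot A$ with $A=A'\cdot A_i$ and $A'\in\PGammasechs$ is labelled $(i,\delta_A)$ with $\delta_A=m_2(A')+j_i$. Factor uniquely
\[
   A_i\cdot g \;=\; B_i^g\cdot A_{\sigma_g(i)}, \qquad B_i^g\in\PGammasechs,
\]
where $\sigma_g$ is the permutation of $\{1,\ldots,12\}$ induced by $g$ on $\mathcal{C}(\PGammazwei:\PGammasechs)$. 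Since $\PGammasechs$ is normal in $\PGammazwei$, one then computes $G_6(2k)\cdot A\cdot g = G_6(2k)\cdot (A'B_i^g)\cdot A_{\sigma_g(i)}$, whose label under Remark~\ref{identify} is
\[
   \bigl(\sigma_g(i),\; \delta_A+\Delta_i^g\bigr),\qquad
   \Delta_i^g := m_2(B_i^g) + j_{\sigma_g(i)} - j_i.
\]
It therefore suffices to verify, for each of the $24$ pairs $(i,g)$, that $\sigma_g(i)$ and $\Delta_i^g$ match the values prescribed by the cycle notation of the statement.

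The permutations $\sigma_g$ are read directly from the Cayley graph of $\PGammazwei/\PGammasechs$ in Figure~\ref{cayleygraph}, producing $\sigma_x=(1\,2\,3)(4\,8\,5)(6\,9\,7)(10\,12\,11)$ and $\sigma_y=(1\,6\,5)(2\,8\,10)(3\,11\,9)(4\,7\,12)$, which match the cycle structures in the proposition. Each $\Delta_i^g$ can then be obtained in either of two equivalent ways. Algebraically, express $B_i^g = A_i\,g\,A_{\sigma_g(i)}^{-1}$ as a word in $x,y$ via the explicit matrices of Remark~\ref{matrices}, rewrite it in terms of the generators $A,B,\ldots,G,L_1,\ldots,L_6$ of $\PGammasechs$ using the identities collected at the beginning of the proof of Lemma~\ref{change}, and apply $m_2$ from Definition~\ref{defpg-1} (noting that $m_2(L_\ell)=0$ for all $\ell$). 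Geometrically, $\Delta_i^g$ is the oriented intersection number with $c_2$ of the lift to $\HH/\PGammasechs$ of the $g$-edge of the Cayley graph running from the quadrilateral $A_i$ to the quadrilateral $A_{\sigma_g(i)}$; this can be read off Figure~\ref{cuttwo} with the convention (stated after Remark~\ref{identify}) that a left-to-right crossing of $c_2$ contributes $+1$.

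The main obstacle is the length of the case analysis rather than any conceptual difficulty, together with careful tracking of the signs of the $c_2$-crossings. A useful built-in consistency check is that the $\Delta_i^g$'s around any three-cycle of $\sigma_x$ or $\sigma_y$ must sum to $0$ modulo $2k$: indeed $g^3\in\PGammasechs$ for $g\in\{x,y\}$ is, up to conjugation by the appropriate coset representative, a product of cusp loops $L_\ell$ (for instance $x^3=L_1$ directly), and all $L_\ell$ lie in $\ker(m_2)$ by Definition~\ref{defpg-1}, so the $m_2$-contributions telescope with the $j_i$-terms. Running through the $24$ pairs $(i,g)$ --- either algebraically via the word calculations above, or simply by reading the intersections with $c_2$ off Figure~\ref{cuttwo} --- reproduces the two permutations of $\{1,\ldots,12\}\times\ZZ/(2k\ZZ)$ announced in the proposition.
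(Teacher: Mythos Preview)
Your proposal is correct and, at its core, is the same argument as the paper's: the paper simply says the permutations ``can be read off easily from Figure~\ref{cuttwo}'' using the interpretation of $\delta_A$ as the intersection number with $c_2$ (the paragraph preceding the proposition), which is precisely your geometric route for computing the $\Delta_i^g$. You have additionally spelled out the algebraic formalism $(\sigma_g(i),\delta_A+\Delta_i^g)$ and an algebraic alternative via $m_2$ and the words in Lemma~\ref{change}, plus the three-cycle consistency check; these are helpful elaborations but not a different method.
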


\begin{proof}
Using the comment in the paragraph before the proposition,
this can be read off easily from Figure~\ref{cuttwo}.
\end{proof}

\subsection{Size of the spectral gap of \boldmath{$\mathbb{H}/\PG{2k}$}}

In the sequel, we want to estimate $\lambda_{2k}:=\lambda_1(\mathbb{H}/\PG{2k})$ the first eigenvalue of the Laplacian on the hyperbolic surface $\mathbb{H}/\PG{2k}$. In order to do so, we recall the Buser inequality (see Buser~\cite{B} and Lubotzky~\cite[p. 44]{L}):
\begin{equation}\label{e.Buser-Cheeger}
\sqrt{10\lambda_{2k}+1}\leq 10 h_{2k}+1
\end{equation}
where 
$$h_{2k}:=\min\limits_{\substack{\gamma \textrm{ multicurve of }\mathbb{H}/\scsPG{2k} \\ \textrm{ separating it into} \\ \textrm{ two connected components }A \textrm{ and }B}}  
   \frac{\ell(\gamma)}{\min\{\textrm{area}(A),\textrm{area}(B)\}}.
$$

For the case at hand, we apply Buser's inequality with the multicurve obtained by the disjoint union of two copies $c_1^{(0)}$ and $c_1^{(k)}$ of $c_1$ on the $0$th and $k$th copies of $\mathbb{H}/\Gamma_6$ inside $\mathbb{H}/\PG{2k}$, see Figure~\ref{f.1}. Here, we recall that $c_1$ is the simple closed geodesic of $\mathbb{H}/\Gamma_6$ in Figure~\ref{generators} 
connecting the ``B-sides'' indicated in the picture, that is, $c_1$ is the simple closed geodesic along which we cut $\mathbb{H}/\Gamma_6$, take several copies of the outcome of this, and reglue them cyclically (see Definition~\ref{defpg}). In this way, by definition, we have that 
\begin{equation}\label{e.Cheeger-constant}h_{2k}\leq \frac{2\ell(c_1)}{k\cdot\textrm{area}(\mathbb{H}/\Gamma_6)}
\end{equation}
On the other hand, one can check that the hyperbolic matrix in $\textrm{P}\Gamma_6$ associated to $c_1$ is $\rho(c_1)=\left(\begin{array}{cc}29 & 12 \\ 12 & 5\end{array}\right)$, so that $\ell(c_1)=2\textrm{ arc cosh}(\frac{|\textrm{tr}\rho(c_1)|}{2})=2\textrm{ arc cosh}(17)$. Combining this with the fact $\textrm{area}(\mathbb{H}/\Gamma_6) = 24\pi$, we can conclude from \eqref{e.Buser-Cheeger} and \eqref{e.Cheeger-constant} that 
$$\sqrt{10\lambda_{2k}+1}\leq\frac{10\cdot 2\cdot 2\textrm{ arc cosh(17)}}{k\cdot 24\pi}+1,$$
i.e.,
$$\lambda_{2k}\leq\frac{10\cdot\textrm{ arc cosh}(17)^2}{9\pi^2(2k)^2}+ \frac{2\cdot\textrm{ arc cosh(17)}}{3\pi (2k)}.$$
Since $\textrm{ arc cosh}(17)<3.5255$, we get, for every $k\geq 3$, 
\begin{equation}\label{e.l2k}
\lambda_{2k}\leq \left(\frac{10\cdot(3.5255)^2}{9\pi^2(2k)}+\frac{2\cdot(3.5255)}{3\pi}\right)\cdot\frac{1}{2k} < \frac{1}{2k}\leq \frac{1}{6}<\frac{1}{4}
\end{equation} 

Next, denoting by $\lambda(\Gamma)$ the first eigenvalue of the Laplacian on a finite area hyperbolic surface $\mathbb{H}/\Gamma$, we note that $\lambda(\Gamma)\leq \lambda(\Gamma')$ whenever $\Gamma$ is a finite index subgroup of $\Gamma'$, that is, the first eigenvalue of the Laplacian of a finite area hyperbolic surface can't increase under finite covers. 

Finally, we recall that the presence of complementary series on the regular representation of $SL(2,\mathbb{R})$ on $L^2(\mathbb{H}/\Gamma)$ is equivalent to the fact that $\lambda(\Gamma)<1/4$ (see~\cite{Rt} and references therein for more details). 

In other words, by putting these facts together, we proved the following result:
\begin{proposition}\label{p.A}Let $\Gamma$ a finite index subgroup of $\emPG{2k}$. 
If $k\geq 2$, then the regular representation of $\textrm{SL}(2,\mathbb{R})$ on $L^2(\mathbb{H}/\Gamma)$ exhibits complementary series.
\end{proposition}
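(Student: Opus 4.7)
The plan is to combine three essentially independent ingredients already assembled in this section: Buser's inequality (which controls $\lambda_1$ by the Cheeger constant), the monotonicity $\lambda_1(\mathbb{H}/\Gamma)\leq\lambda_1(\mathbb{H}/\Gamma')$ under finite covers $\Gamma\subset\Gamma'$, and Ratner's characterization of complementary series as the condition $\lambda_1<1/4$.

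The first move is to reduce to the case $\Gamma=\textrm{P}\Gamma_6(2k)$: any finite index subgroup $\Gamma\subset\textrm{P}\Gamma_6(2k)$ yields a finite unramified cover $\mathbb{H}/\Gamma\to\mathbb{H}/\textrm{P}\Gamma_6(2k)$, and the variational principle forces $\lambda_1(\mathbb{H}/\Gamma)\leq\lambda_{2k}$. Combined with Ratner's theorem, the proposition reduces to establishing $\lambda_{2k}<1/4$ for $k\geq 3$. The second move is Buser's estimate applied to the explicit test multicurve $c_1^{(0)}\sqcup c_1^{(k)}$ consisting of the two copies of $c_1$ sitting in the $0$th and $k$th sheets of the $2k$-sheeted cyclic cover. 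The defining monodromy of the cover makes this multicurve separate $\mathbb{H}/\textrm{P}\Gamma_6(2k)$ into two congruent halves of area $k\cdot\textrm{area}(\mathbb{H}/\Gamma_6)$, yielding $h_{2k}\leq 2\ell(c_1)/\bigl(k\cdot\textrm{area}(\mathbb{H}/\Gamma_6)\bigr)$.

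The third move is to make this bound numerically effective. The hyperbolic area $\textrm{area}(\mathbb{H}/\Gamma_6)=24\pi$ follows from the index $[\textrm{PSL}_2(\mathbb{Z}):\textrm{P}\Gamma_6]=12$ together with the standard $\textrm{area}(\mathbb{H}/\textrm{PSL}_2(\mathbb{Z}))=\pi/3$. For the length I would identify $c_1$ with the hyperbolic matrix $\rho(c_1)\in\textrm{P}\Gamma_6$ so that $\ell(c_1)=2\,\textrm{arc cosh}(|\textrm{tr}\rho(c_1)|/2)$; a direct trace computation yields $\ell(c_1)=2\,\textrm{arc cosh}(17)<2(3.5255)$. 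Plugging these numbers into Buser's inequality and squaring gives $\lambda_{2k}<1/(2k)\leq 1/6<1/4$ as soon as $k\geq 3$, completing the argument.

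The main technical point requiring care is the identification of $\rho(c_1)$ explicitly. Concretely, one must express the free homotopy class of $c_1$ as a word in the generators $A,\ldots,G,L_1,\ldots,L_6$ of $\pi_1(\mathbb{H}/\Gamma_6)$ from Figure~\ref{generators}, rewrite this as a word in $x,y$, and multiply the resulting $2\times 2$ matrices out to verify that the trace has absolute value $34$. A secondary subtlety is checking that $c_1^{(0)}\sqcup c_1^{(k)}$ really is a separating multicurve of the symmetric type claimed above; this rests on $c_1$ being non-separating on the genus-one surface $\mathbb{H}/\Gamma_6$, which is exactly the point of initiating the construction from $\Gamma_6$ rather than from $\Gamma_2$.
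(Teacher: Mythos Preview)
Your proposal follows the paper's argument essentially step for step: Buser's inequality applied to the test multicurve $c_1^{(0)}\sqcup c_1^{(k)}$, the explicit values $\textrm{area}(\mathbb{H}/\Gamma_6)=24\pi$ and $\ell(c_1)=2\,\textrm{arccosh}(17)$, the numerical conclusion $\lambda_{2k}<1/(2k)\leq 1/6<1/4$ for $k\geq 3$, monotonicity under finite covers, and Ratner's criterion. One small slip: the index $[\textrm{PSL}_2(\mathbb{Z}):\textrm{P}\Gamma_6]$ is $72$, not $12$ (it is $[\textrm{P}\Gamma_2:\textrm{P}\Gamma_6]$ that equals $12$, and $[\textrm{PSL}_2(\mathbb{Z}):\textrm{P}\Gamma_2]=6$); with index $12$ the area would come out to $4\pi$, not $24\pi$, so your stated justification does not actually yield the number you then use. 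Also note that, like the paper's own computation, your numerics only cover $k\geq 3$, whereas the proposition is stated for $k\geq 2$.
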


\begin{remark}\label{gap}
  In fact, since the size of the spectral gap $\sigma(\Gamma)$ of the regular representation 
  associated to $\mathbb{H}/\Gamma$ relates to the first eigenvalue $\lambda(\Gamma)$ through the equation
  $$\sigma(\Gamma)=1-\sqrt{1-4\lambda(\Gamma)}$$ 
  whenever $\Gamma$ has complementary series (see~\cite{Rt}), we see that the spectral gap 
  $\sigma(\Gamma)$ for $\Gamma$ a finite index subgroup of $\PG{2k}$ obeys the following inequality
  $$\sigma(\Gamma)\leq \sigma(\PG{2k})\leq \frac{2\lambda_{2k}}{\sqrt{1-4\lambda_{2k}}}\leq 
  2\sqrt{3}\lambda_{2k}<\frac{\sqrt{3}}{k}$$
  for all $k\geq 3$. Here, we used that, from~\eqref{e.l2k}, one has $1/\sqrt{1-4\lambda_{2k}}\leq \sqrt{3}$ and $\lambda_{2k}\leq 1/(2k)$ when $k\geq 3$.
\end{remark} 

\begin{remark}\label{noncongruence}
It follows from Selberg's $3/16$ Theorem~\cite{Sel} and the estimate~\eqref{e.l2k} above that, for each $k\geq 3$, $\PG{2k}$ is \emph{not} a congruence subgroup of $SL(2,\mathbb{Z})$, i.e., there is no $N\in\mathbb{N}$ such that $\PG{2k}$ contains the principal congruence group $\Gamma_N$ of level $N$ (consisting of all matrices of $SL(2,\mathbb{Z})$ which are the identity modulo $N$). Indeed, if $\Gamma_N\subset \PG{2k}$ for some $N$, we would have $\lambda(\Gamma_N)\leq \lambda_{2k}\leq 1/6$, a contradiction with Selberg's $3/16$ Theorem (saying that $\lambda(\Gamma_N)\geq 3/16$ for all $N\in\mathbb{N}$).
\end{remark}

\section{Square-tiled surfaces with Veech group inside $\pm\Gamma_6(2k)$}\label{s.EMcRSch}

In this section we describe a construction to obtain 
translation surfaces 
whose Veech groups are subgroups of $\pm\G{2k}$ ($k \in \NN$), see Definition~\ref{defpg}.
We will use very special translation surfaces, called 
{\em origamis} or {\em square-tiled surfaces}.
Recall that an origami is a finite covering $p:X \to E$
from a closed surface $X$ to the torus $E = \CC/(\ZZ \oplus \ZZ i)$
which is ramified at most over the point $\infty = (0,0)$ on $E$, see e.g.  \cite{Sc2}.
We may pull back the natural Euclidean translation
structure on $E = \CC/(\ZZ\oplus\ZZ i)$ to $X$ and obtain a  translation surface
with cone points singularities
which is tiled by $d$ squares, where $d$ is the degree of the covering $p:X \to E$.
Origamis can equivalently be defined to be translation surfaces
obtained from gluing finitely many copies of the Euclidean unit square
along their edges by translations. 
We can present an origami by a pair of 
transitive permutations $\sigma_a$ (horizontal
gluings) and $\sigma_b$ (vertical gluings) in the symmetric group $S_d$
well defined up to simultaneous conjugation.
Figure~\ref{Ezwei} shows e.g. the origami $E[2]$ which is an 
origami of degree 4. Indeed,  as a translation surface
it is isomorphic to $\CC/(2\ZZ \oplus 2\ZZ i)$ and the multiplication by 2
is the corresponding covering $[\cdot 2]: E[2] \to E$ of degree 4
to the torus. It can be described by the pair of permutations $\sigma_a = (1,2)(3,4)$
and $\sigma_b = (1,3)(2,4)$. 

\begin{figure}[ht!]
\begin{center}
\setlength{\unitlength}{.75cm}
\begin{picture}(2,2)
\put(0,0){\framebox(1,1){}}
\put(1,0){\framebox(1,1){}}
\put(0,1){\framebox(1,1){}}
\put(1,1){\framebox(1,1){}}
\put(-.5,-.5){$\A$}
\put(.5,-.5){$\B$}
\put(-.5,.52){$\Cc$}
\put(.5,.52){$\D$}
\put(0,0){\circle*{.2}}
\put(1,0){\circle*{.2}}
\put(0,1){\circle*{.2}}
\put(1,1){\circle*{.2}}
\end{picture}
\end{center}
\medskip
\caption{
The trivial $2 \times 2$ - origami $\Ezwei$:
opposite edges are glued by translations.
\label{Ezwei}
}
\end{figure}
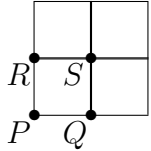

For an origami $p:X\to E$ we will
always consider $X$ endowed 
with the lifted translation structure. 
We denote by $\aff(X)$ the group of orientation preserving 
affine homeomorphisms of $X$ and by $\Gamma(X)$ the 
Veech group of $X$, i.e. the image of the derivative map 
$D: \aff(X) \to \slzwei(\RR)$.
Observe that the {\em Veech group of an origami} as considered in 
\cite{Sc2} consists only of the derivatives of those affine homeomorphisms that preserve 
the fibre of the point $\infty$. However, if the derived vectors of the saddle 
connections on $X$
span the lattice $\ZZ \oplus \ZZ i$, this is indeed the full Veech group, see
e.g. \cite[Lemma 2.3]{HL}.
We will furthermore frequently use that in this case each affine homeomorphism
of $X$ descends via the covering map $p$ to $E$. If $p$ factors through
$[\cdot 2]$, i.e. there is a covering $p': O \to E[2]$ such that
$p = [\cdot 2] \circ p'$, then the same holds for $p'$ and $E[2]$, see e.g.  
\cite[Prop. 2.6]{Sc2}.\\
Following the notation in \cite{GJ} we denote 
for a covering $h:X \to Y$ of translation surfaces  
\[
  \begin{array}{lcl}
    \aff_h(X) &=& 
       \{f \in \aff(X)| f \mbox{ descends via $h$ to } Y\} \mbox{ and }\\
    \aff^h(Y) &=& 
       \{f \in \aff(Y)| f \mbox{ lifts via $h$ to some homeomorphism of } X\}. 
  \end{array}
\]
Recall finally that the Veech group of the torus 
$E = \CC/(\ZZ \oplus \ZZ i)$ itself
is the group $\slzwei(\ZZ)$: An affine homeomorphism 
\[\CC \to \CC,\;\; {x\choose y} \mapsto A \cdot {x\choose y} + b\]
 of the universal
covering $\CC$ descends to an affine homeomorphism $f$ of $E$, 
if and only if $A$ is in $\slzwei(\ZZ)$.
If we require in addition that $f$ fixes the point $\infty = (0,0)$
on $E$, then we obtain for each $A\in \slzwei(\ZZ)$
a unique affine homeomorphism on $E$
with derivative $A$. Thus $D: \aff(E) \to \slzwei(\RR)$ restricts to an isomorphism
\[
  \begin{array}{l}
    \aff_0(E) = 
        \{f \in \aff(E)| f(0,0) = (0,0)\} \; \cong \; \slzwei(\ZZ).
  \end{array}
\]

In the following construction we 
start with the torus $E[2] = \CC/(2\ZZ \oplus 2\ZZ i)$ 
(see Figure~\ref{Ezwei}).
We fix the four points
$P = \infty = (0,0)$, $Q = (1,0)$, $S = (0,1)$ and $R = (1,1)$
on $E[2]$. Then the derivative map
$D: \aff(E[2]) \to \slzwei(\RR)$ further restricts to an isomorphism
\[
  \begin{array}{lcl}
    \aff_1(E[2]) 
    &=& \{f \in \aff(E[2])| f(P) = P, f(Q) = Q \mbox{ and } f(R) = R\}\\
    &=& \{f \in \aff(E[2])| f(P) = P, f(Q) = Q \mbox{ and } f(S) = S\}\\
	&\cong& \Gammazwei.
  \end{array}
\]

We will now construct origamis with Veech 
group $\Gamma = \pm\G{2k}$ proceeding in
the following two main steps.
In the first step, following the ideas from part of 
\cite{em}, we construct a covering
$\tilde{q}: Y \to E[2]$ with the following properties:
\begin{itemize}
\item[A)] 
  $\tilde{q}$ is ramified precisely over $P = (0,0)$, $Q = (1,0)$ and $S = (1,1)$.
\item[B)]
  The degree of the covering is equal to the index $[\Gammazwei:\Gamma]$.
\item[C)]
  All affine homeomorphisms in $\aff_1(E[2])$ have some lift in 
  $\aff(Y)$ via $\tilde{q}$.
  Denote by $({\aff_1})_{\tilde{q}}(Y)$ the group of those lifts. In
  particular, $D(({\aff_1})_{\tilde{q}}(Y))$ is equal to $\Gammazwei$.
\item[D)]
  There is a bijection $\theta$ between the fibre $ \tilde{q}^{-1}(R)$
  of $R$ and the set of left cosets $\Gammazwei/\Gamma$.
  Furthermore $\theta: \tilde{q}^{-1}(R) \to \Gammazwei/\Gamma$ 
  is equivariant
  with respect to $D:({\aff_1})_{\tilde{q}}(Y) \to \Gammazwei$.
  Here $({\aff_1})_{\tilde{q}}(Y)$ acts naturally on the fibre $\tilde{q}^{-1}(R)$
  and $\Gammazwei$ acts on the set of left cosets $\Gammazwei/\Gamma$
  by multiplication from the left. I.e.
  \[\forall\, f \in ({\aff_1})_{\tilde{q}}(Y): 
  (\cdot D(f)) \circ \theta = \theta \circ f,\] where $(\cdot D(f))$  denotes
  the multiplication with $D(f)$ from the left.\\
  Let $R_{\id}$ be the point in the fibre of $R$ with $\theta(R_{\id}) = \id \cdot \Gamma$.
\end{itemize}

In the second step\label{secondmainstep}, we obtain the desired origami
following the construction in \cite[Chap. 5]{Sc1} by choosing
a covering $r:Z \to Y$ with a suitable ramification
behaviour. For example, we choose the covering
in such a way that $\tilde{q}\circ r$
is ramified differently
above $P$, $Q$ and $S$. Furthermore
$r$  is ramified  above $R_{\id}$ 
differently than above all other points 
in $\tilde{q}^{-1}(R)$.
Thus if $f \in \aff(Z)$ descends
via $r$ to $\bar{f} \in \aff(Y)$,
then $\bar{f}$ is in $({\aff_1})_{\tilde{q}}(Y)$ and must fix $R_{\id}$. 
By the property D)  that we required for $\tilde{q}$, 
the second condition means 
that
left multiplication with $D(\bar{f})$ fixes the coset of the 
identity. Thus $D(f) = D(\bar{f})$ is in $\Gamma$.\\
It follows from the last paragraph that 
if each affine homeo\-morphism of $Z$
descends to $Y$, then we are done. To achieve this is a
technical difficulty, we have to take extra care of.
This is where we need that our group $\Gamma$
is the group  $\pm\G{2k}$. We study the action of 
$\aff(Y)$ on $\tilde{q}^{\,-1}(\{P,Q,R,S\}) \subset Y$. 
The Veech group of $Y$ turns out
to be the full group $\slzwei(\ZZ)$ (see Proposition~\ref{vgY}).
We find
a partition of $\tilde{q}^{-1}(\{P,Q,R,S\})$, 
such that if $r$ is ramified differently above different classes in the partition,
then all affine homeomorphisms descend  and we are done.
Proposition~\ref{mainprop} does this job.

\subsection{The Ellenberg/McReynolds construction}

In this paragraph we explain how one obtains 
a covering $\tilde{q}$ with the properties 
A) - D) as above. For this part
we just need that the group $\Gamma$ is a subgroup of $\Gammazwei$
containing $-I$.
As stated above we follow part of the proofs in \cite{em}, which 
gives us this beautiful construction.
We briefly describe the geometric interpretation behind it
and how this leads to what we want.
In Section~\ref{sectionX} and 
Section~\ref{step2}, we will explicitly define the origami
$Y$ (see Definition~\ref{defy}) and show that it has all properties 
which we need.
Hence the following
paragraph is just for giving a motivation how we obtained the surfaces
$Y$, but not necessary for the logic of our proofs. \\
It seems natural to describe the construction in terms of {\em fibre products} also called
{\em pullbacks}. 
They are more commonly used in the context of Algebraic geometry.
But recall that we may also take fibre products of topological spaces 
(see e.g. \cite[11.8, 11.12(2)]{AHS}). Our topological spaces here will be punctured
closed Riemann surfaces, i.e. closed Riemann surfaces with finitely many points removed,
thus we notate them in the following 
by $B^*$, $X^*$, ${X_1}^*$ and ${X_2}^*$.
For two continuous maps  
$p_1:{X_1}^* \to B^*$ and $p_2:{X_2}^* \to B^*$ the fibre product 
is the topological space 
\[X^* = \{(a,b) \in {X_1}^* \times {X_2}^*|\; p_1(a) = p_2(b)\}\]
endowed with the subspace topology 
together with the two maps
$q_1: X^* \to {X_1}^*$ and $q_2:X^* \to {X_2}^*$ which are just the projections.
Thus we have a commutative diagram:
\[\xymatrix{
  \ar @{} [drr] |{\square}
   X^* \ar[rr]^{q_1}\ar[d]_{q_2} &&
       {X_1}^* \ar[d]^{p_1} \\ 
  {X_2}^*  \ar[rr]^{p_2} &&  B^* 
}\]
The square in the middle denotes that this is a fibre product.
The fibre product has a nice universal property (which is actually used 
to define it for general categories, see e.g. \cite[11.8]{AHS}), but we do not further need it here.
We will just need the following properties:
\begin{itemize}
\item[F1)] 
  If $p_2$ is an embedding, then $q_1$ is one as well, and
  $X^{*}$ is the preimage of 
  ${X_2}^*$ via $p_1$.
\item[F2)]
  If $p_1$ and $p_2$ are unramified coverings of punctured closed 
  Riemann surfaces, then $q_1$ and $q_2$
  are also unramified coverings of punctured closed Riemann surfaces. $X^*$
  does not have to be connected, even if ${X_1}^*$ and ${X_2}^*$ are.
  See e.g. \cite[1.2]{FlorianDiss} for an explicit definition
  of the fibre product in the case of unramified (not necessarily connected) 
  coverings and an explicit calculation of the monodromies of 
  the coverings $q_1$ and $q_2$ in terms of the monodromies of $p_1$ and $p_2$.
\item[F3)]
  If,  in addition to the conditions in F2), 
  $X^*$ is connected, then we can as well describe
  the fibre product $X^*$ by its fundamental group. 
  More precisely, the fundamental
  groups of ${X_1}^*$, ${X_2}^*$ and $X^*$ then embed via the coverings
  into $\pi_1(B^*)$ and $\pi_1(X^*) = \pi_1({X_1}^*) \cap \pi_1({X_2}^*)$. This
  can be seen e.g. from Theorem~B in \cite{FlorianDiss} (in this
  case $\m_f \times m_g$ acts transitively).
\item[F4)]
  Suppose now that $p_1$ is a covering as before and 
  $p_2$ is an embedding of punctured
  closed surfaces. Then the map $(p_2)_{*}: \pi_1({X_2}^*) \to \pi_1(B^*)$ 
  induced by $p_2$ between fundamental groups is a surjection,
  whereas $ \pi_1({X_1}^*)$ embeds into $\pi_1(B^*)$ via $({p_1})_{*}$.
  The map $q_2$ is an unramified covering and
  we have:
  \[\pi_1(X^*) = (p_2)_{*}^{-1}(\pi_1({X_1}^*)) \subseteq \pi_1({X_2}^*)\]

\end{itemize}

In the following we use the degree 2 covering $h:E[2] \to \proj(\CC)$
defined as the quotient by the affine involution $\iota$ on $E[2]$ with derivative
$-I$ and fixed points $P$, $Q$, $R$ and $S$. The covering is
ramified at $P$, $Q$, $R$ and $S$ and we may choose the isomorphism 
$E[2]/\iota \cong \proj(\CC)$ in such a way that their images are $0$, $1$, $\infty$
and $\lambda = -1$. This leads to an unramified covering
\[h: E[2]^* = E[2]\backslash\{P,Q,R,S\} \to \projpppp = \proj\backslash\{0,1,\infty,\lambda\}.\]
We furthermore use the utile coincidence that we have:
\[\projppp = \proj(\CC)\backslash\{0,1,\infty\} \cong \HH/\PGamma(2) \cong \HH/\Gammazwei.\]
Thus the embedding  $\Gamma \into \Gammazwei$ defines an unramified
covering \[\beta:\hat{Y}^* = \HH/\Gamma \to \projppp.\]
We now remove the additional point $\lambda = -1$ on $\projppp$, consider
the embedding $i:\projpppp \into \projppp$ and define
$\hat{Y}^{**} = \hat{Y}^{*}\backslash \beta^{-1}(\lambda) = \beta^{-1}(\projpppp)$. 
We then 
take the fibre product $Y^*$ of $\beta: \hat{Y}^{**} \to \projpppp$ and $h: E[2]^* \to \projpppp$
and define $\tilde{q}$ to be the projection $Y^* \to E[2]^*$.
Hence we have the following commutative diagram:
\begin{equation}\label{lm-construction}
  \xymatrix{
    \ar @{} [drr] |{\square}
     Y^* \ar[rr] \ar[d]^{\tilde{q}}&&
      \ar @{} [drr] |{\square}
     \hat{Y}^{**} \ar@{^{(}->}[rr]\ar[d]^{\beta} &&
    \hat{Y}^* \ar[d]^{\beta} && 
   \\
   E[2]^* \ar[rr]^{h} &&
   \projpppp \ar@{^{(}->}[rr]^{i} &&
    \projppp 
  }
\end{equation}
The construction in (\ref{lm-construction}) was
already used in \cite{Moeller} in order to construct origamis from dessins d'enfants.
With the help of this it was deduced that the absolute Galois group Gal$(\Qquer/\QQ)$
acts transitively on origami curves from the fact that it does it on dessins d'enfants.
In \cite[Theorem 3]{FlorianDiss} the monodromies of the coverings are explicitly calculated.
In particular, one directly sees from the monodromy
of $\tilde{q}$ in the proof of Theorem 3 on p.52 ($\tilde{q}$ is called $\pi$ there, 
its monodromy is $m_{\pi}$,
the pair $(\sigma_x,\sigma_y)$
describes the monodromy of $\beta$, $d$ is the degree of $\beta$) 
that the monodromy of $\tilde{q}$ acts transitively
on $\{1, \ldots, d\}$, since $\hat{Y}^*$ is connected. Thus also $Y^*$ is connected.\\
We now obtain the fundamental group of $Y^*$ as follows: Following the notations
in \cite[Lemma 3.2 and Prop. 3.1]{em} denote 
\[\Delta = \Gamma = \pi_1(\projppp).\]  
The last equality means
that we have identified these two groups by a fixed isomorphism.
From F4) and F3) we see:
\[ 
\begin{array}{lcl} 
  \pi_1(\hat{Y}^{**}) &=& (i)_{*}^{-1}(\pi_1(\hat{Y}^{*})) =: G(\Delta) \mbox{ and }\\
  \pi_1(Y^*)         &=&  G(\Delta) \cap \pi_1(E[2]^*) =: G(\Delta)_0.
\end{array}
\]
Lemma~3.2  and the discussion in Section 2 in \cite{em} shows that
any element $\overline{A} \in \PGamma(2)$ defines  a homeomorphism
on $\projpppp$ via the pushing map which lifts to $\hat{Y}^{**}$ and acts on the 
punctures in the fibre of
$\lambda$ in the desired way. Furthermore, each homeomorphism on $\projpppp$
has two lifts on $E[2]^{*}$ which fix the four punctures of $E[2]^{*}$ pointwise. 
We can choose affine representatives of those in their homotopy class. They
will then have derivative $A$ and $-A$ (for an appropriate choice of the isomorphism
$\HH/\Gammazwei \cong \projppp$ in the beginning), see \cite[Section 2]{em}. 
It follows e.g. from the proof of Proposition~3.1 in  \cite{em} that they lift to 
$Y^*$. The lifts then act on the fibre of $S = (1,1)$ in the desired way.\\
The fact  we have just seen, namely that in the construction in (\ref{lm-construction}) the group
$\Gammazwei$ is always a subgroup
of the Veech group $\Gamma(Y^*)$
was already in \cite{Moeller} a crucial 
part in the proof for the faithfulness of the action of Gal$(\Qquer/\QQ)$
on Teichm\"uller curves.
How one explicitly obtains the Veech group in terms of the monodromy $\beta$
is given in \cite[Theorem 4]{FlorianDiss}.\\
Filling in the punctures into $Y^*$ and $E[2]^*$ and extending
$\tilde{q}$ to a ramified covering of the closed Riemann surfaces finally gives
the desired map $\tilde{q}: Y \to E[2]$.\\ 

In our special case, where $\Gamma = \pm\G{2k}$,
we have the intermediate cover 
\[\HH/\PG{2k} \stackrel{\beta_1}{\longrightarrow} \HH/\PGammasechs 
                \stackrel{\beta_2}{\longrightarrow} \HH/\PGammazwei.\]
Recall that we have studied the coverings $\beta_1$ and $\beta_2$ in  
Section~\ref{pg}. We can now determine the fibre product in two steps. First, $X^*$ is
the fibre product of the degree 12 cover 
$\beta_2: \HH/\PGammasechs \to \HH/\PGammazwei = \projppp$ 
and the map $i\circ h: E[2]^* \to \projppp$  having the two projections
$X^* \to \HH/\PGammasechs$ and $p: X^* \to E[2]^*$. Secondly, 
$Y^*$ is the fibre product of $X^* \to \HH/\PGammasechs$ and the 
cyclic covering $\beta_1: \HH/\PG{2k} \to \HH/\PGammasechs$ having 
the projections $Y^* \to \HH/\PG{2k}$ and $q: Y^* \to X^*$. The 
desired covering $\tilde{q}$ is then $\tilde{q} = p\circ q$. We explicitly
give the translation surface $X^*$ in Section~\ref{sectionX} and $Y^*$ 
in Section~\ref{step2} and show that they have the properties we need.
In Section~\ref{step3} we make explicit which criterions coverings 
$r: Z^* \to Y^*$ must fulfill that we can use them in the second step
described on page~\pageref{secondmainstep}. 
We finally prove Theorem~\ref{t.A} in Section~\ref{proofoftheorem}.\\

For the coverings $p$ and $q$ we more precisely show:

\begin{itemize}
\item 
  The covering $p: X\to \Ezwei$ is normal, of degree 12 and its
  Galois group is $\PGammazwei/\PGammasechs  \cong 
  \pslzwei(\ZZ/3\ZZ)$ (see (\ref{quotient})). 
  The Veech group of the origami $X$ is $\slzwei(\ZZ)$. Its affine group  
  contains a subgroup $\Gaffzwei$ isomorphic to 
  $\Gammazwei$, see Proposition~\ref{propx} and Lemma~\ref{actionx}.\\  
  Furthermore, the cover $p$ is unramified over the point $\Cc$.
  We give an explicit identification of the 12 
  preimages $\Cc_1$, \ldots, $\Cc_{12}$ with the cosets in 
  $\PGammazwei/\PGammasechs$
  in such a way that the action of $\Gaffzwei$ on them is equal to the
  action of $\PGammazwei$ on the cosets  up to the 
  automorphism $\gamma$, see Lemma~\ref{actionx}.
\item
  The covering $q: Y \to X$ is normal and of degree $2k$ with 
  Galois group $\ZZ/(2k\ZZ)$. Similarly as 
  before we obtain a suitable identification of the fibre of 
  $p \circ q: Y \to \Ezwei$  over $\Cc$
  with $\PGammazwei/\PG{2k}$, see Section~\ref{step2}.
 \item
    $\PG{2k}$ is 
   the stabiliser in the Veech group $\Gamma(Y)$ 
   of a certain partition  of $\mathcal{S}$,
   the set of punctures on $Y$, see Section~\ref{step2}. 
   Coverings  $Z$ which ``ramify with respect to this partition'' have a 
   Veech group whose image in $\pslzwei(\ZZ)$ is contained in 
   $\PG{2k}$, see Section~\ref{step3}. 
\item
  In Section~\ref{proofoftheorem}  
   we give explicit examples of such origamis.
\end{itemize}

\subsection{The origami {\boldmath $X$}}
\label{sectionX}

We define the origami $X$ as follows: It consists of $4\times 12 = 48$ squares
labelled by $(A,h)$ with $A \in \PGammasechs\backslash\PGammazwei$ and 
$h \in \{1,2,3,4\}$. The horizontal gluing rules are:

\begin{equation}\label{gluexhorizontal}
\begin{array}{ll}
  (A,1) \mapsto (A,2),  & (A,2) \mapsto (A\cdot L^{-2}, 1),\\
  (A,3) \mapsto (A,4),  & (A,4) \mapsto (A\cdot L^2,3)
\end{array}
\end{equation}
The vertical gluings are:
\begin{equation}\label{gluexvertical}
\begin{array}{ll}
  (A,1) \mapsto (A\cdot L^2,3)  &  (A,2) \mapsto (A\cdot L^{-2}, 4)\\
  (A,3) \mapsto (A\cdot T^{-2},1) &  (A,4) \mapsto (A\cdot T^2, 2)
\end{array}
\end{equation}
You can easily read up from the Cayley graph of $\PGammazwei/\PGammasechs$ 
in Figure~\ref{cayleygraph} (recall that $\PGammasechs$ is normal in $\PGammazwei$) 
that Figure~\ref{xhorizontal} below shows the origami $X$.

\newcommand{\aaa}{(A_6,2)}\newcommand{\bbb}{(A_1,1)}\newcommand{\ccc}{(A_1,2)}
\newcommand{\ddd}{(A_5,1)}\newcommand{\eee}{(A_5,2)}\newcommand{\fff}{(A_6,1)}
\newcommand{\gggg}{(A_6,4)}\newcommand{\hhh}{(A_5,3)}\newcommand{\iii}{(A_5,4)} 
\newcommand{\jjj}{(A_1,3)}\newcommand{\kkkk}{(A_1,4)}\newcommand{\llll}{(A_6,3)}
\newcommand{\mmm}{(A_9,2)}\newcommand{\nnn}{(A_{11},1)}\newcommand{\ooo}{(A_{11},2)} 
\newcommand{\ppp}{(A_3,1)}\newcommand{\qqq}{(A_3,2)}\newcommand{\rrr}{(A_9,1)}
\newcommand{\sss}{(A_9,4)}\newcommand{\ttt}{(A_3,3)}\newcommand{\uuu}{(A_3,4)} 
\newcommand{\vvv}{(A_{11},3)}\newcommand{\www}{(A_{11},4)}\newcommand{\xxx}{(A_9,3)}
\newcommand{\bluea}{(A_2,2)} \newcommand{\blueb}{(A_{10},1)}\newcommand{\bluec}{(A_{10},2)}
\newcommand{\blued}{(A_8,1)}\newcommand{\bluef}{(A_8,2)}\newcommand{\blueg}{(A_2,1)}
\newcommand{\blueh}{(A_2,4)}\newcommand{\bluei}{(A_8,3)}\newcommand{\bluej}{(A_8,4)}
\newcommand{\bluek}{(A_{10},3)}\newcommand{\bluel}{(A_{10},4)}\newcommand{\bluem}{(A_2,3)}
\newcommand{\reda}{(A_4,2)}\newcommand{\redb}{(A_{12},1)}\newcommand{\redc}{(A_{12},2)}
\newcommand{\rede}{(A_7,1)}\newcommand{\redf}{(A_7,2)}\newcommand{\redg}{(A_4,1)}
\newcommand{\redh}{(A_4,4)}\newcommand{\redi}{(A_7,3)}\newcommand{\redj}{(A_7,4)}
\newcommand{\redk}{(A_{12},3)}\newcommand{\redl}{(A_{12},4)}\newcommand{\redm}{(A_4,3)}
\newcommand{\rh}{\sst (A_8,2)}\newcommand{\ri}{\sst(A_9,1)}\newcommand{\rj}{\sst(A_6,2)}
\newcommand{\rk}{\sst(A_{10},1)}\newcommand{\rl}{\sst(A_{11},2)}\newcommand{\rrm}{\sst(A_5,1)}
\newcommand{\ra}{\sst(A_5,4)}\newcommand{\rb}{\sst(A_{11},3)}\newcommand{\rc}{\sst(A_{10},4)}
\newcommand{\re}{\sst(A_6,3)}\newcommand{\rf}{\sst(A_9,4)}\newcommand{\rg}{\sst(A_8,3)}
\newcommand{\ba}{\sst(A_1,4)}\newcommand{\bb}{\sst(A_{12},3)}\newcommand{\bc}{\sst(A_{11}4,)}
\newcommand{\bd}{\sst(A_5,3)}\newcommand{\bbf}{\sst(A_4,4)}\newcommand{\bg}{\sst(A_3,3)}
\newcommand{\bh}{\sst(A_3,2)}\newcommand{\bi}{\sst(A_4,1)}\newcommand{\bj}{\sst(A_5,2)}
\newcommand{\bk}{\sst(A_{11},1)}\newcommand{\bl}{\sst(A_{12},2)}\newcommand{\bm}{\sst(A_1,1)}
\newcommand{\va}{\sst(A_6,4)}\newcommand{\vb}{\sst(A_{10},3)}\newcommand{\vc}{\sst(A_{12},4)}
\newcommand{\ve}{\sst(A_1,3)}\newcommand{\vf}{\sst(A_2,4)}\newcommand{\vg}{\sst(A_7,3)}
\newcommand{\vh}{\sst(A_7,2)}\newcommand{\vi}{\sst(A_2,1)}\newcommand{\vj}{\sst(A_1,2)}
\newcommand{\vk}{\sst(A_{12},1)}\newcommand{\vl}{\sst(A_{10},2)}\newcommand{\vm}{\sst(A_6,1)}
\newcommand{\ya}{\sst(A_7,4)}\newcommand{\yb}{\sst(A_2,3)}\newcommand{\yc}{\sst(A_3,4)}
\newcommand{\ye}{\sst(A_4,3)}\newcommand{\yf}{\sst(A_8,4)}\newcommand{\yg}{\sst(A_9,3)}
\newcommand{\yh}{\sst(A_9,2)}\newcommand{\yi}{\sst(A_8,1)}\newcommand{\yj}{\sst(A_4,2)}
\newcommand{\yk}{\sst(A_3,1)}\newcommand{\yl}{\sst(A_2,2)}\newcommand{\ym}{\sst(A_7,1)}

\newcommand{\Aone}{\A_1}\newcommand{\Atwo}{\A_2}\newcommand{\Athree}{\A_3}\newcommand{\Afour}{\A_4}
\newcommand{\Bone}{\B_1}\newcommand{\Btwo}{\B_2}\newcommand{\Bthree}{\B_3}\newcommand{\Bfour}{\B_4}
\newcommand{\Cone}{\Cc_1}\newcommand{\Ctwo}{\Cc_2}\newcommand{\Cthree}{\Cc_3}\newcommand{\Cfour}{\Cc_4}
\newcommand{\Done}{\D_1}\newcommand{\Dtwo}{\D_2}\newcommand{\Dthree}{\D_3}\newcommand{\Dfour}{\D_4}
\newcommand{\Cfive}{\Cc_5}\newcommand{\Csix}{\Cc_6}\newcommand{\Cseven}{\Cc_7}\newcommand{\Ceight}{\Cc_8}
\newcommand{\Cnine}{\Cc_9}\newcommand{\Cten}{\Cc_{10}}\newcommand{\Celeven}{\Cc_{11}}\newcommand{\Ctwelve}{\Cc_{12}}

\begin{figure}[htb!]
\scalebox{.65}{\input{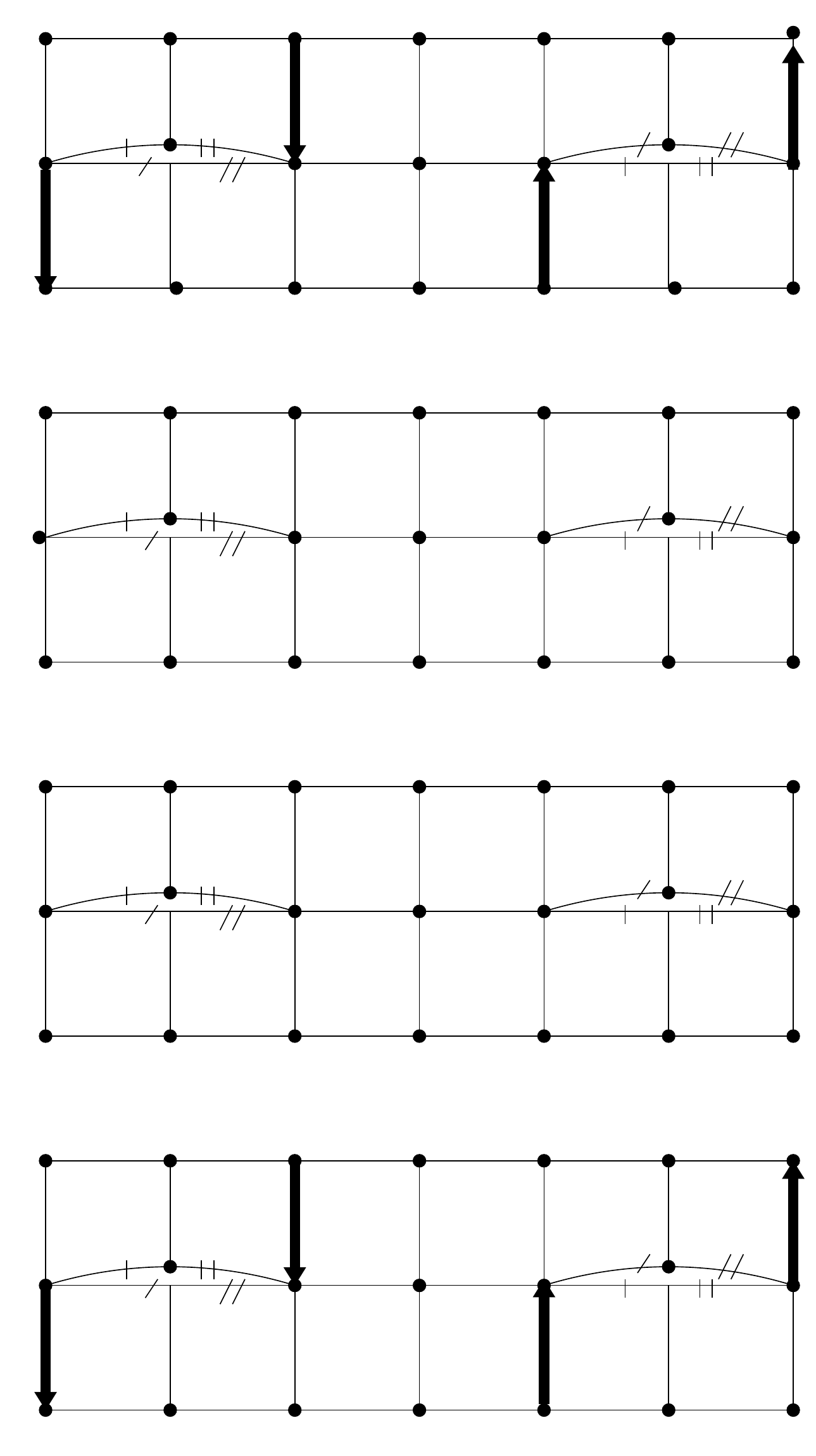_t}}
\caption{
  The horizontal cylinder-decomposition of the origami $X$. An edge labelled
  by a pair $(A_i,h)$ is glued to the suitable edge of Square $(A_i,h)$. 
  Slits with same labels within a double cylinder
  are glued. Opposite vertical unlabelled edges are glued. The vertical black arrows
  indicate slits for a later  construction in Section~\ref{step2}.
  \label{xhorizontal}
}
\end{figure}

Figure~\ref{xvertical}
below shows the same origami with its vertical cylinder decomposition.
\newcommand{\vfa}{(A_{12},3)}
\newcommand{\vfb}{(A_4,1)}
\newcommand{\vfc}{(A_5,3)}
\newcommand{\vfd}{(A_1,1)}
\newcommand{\vfe}{(A_3,3)}
\newcommand{\vff}{(A_{11},1)}
\newcommand{\vfg}{(A_{11},4)}
\newcommand{\vfh}{(A_3,2)}
\newcommand{\vfi}{(A_1,4)}
\newcommand{\vfj}{(A_5,2)}
\newcommand{\vfk}{(A_4,4)}
\newcommand{\vfl}{(A_{12},2)}
\newcommand{\vsa}{(A_{11},3)}
\newcommand{\vsb}{(A_9,1)}
\newcommand{\vsc}{(A_6,3)}
\newcommand{\vsd}{(A_5,1)}
\newcommand{\vse}{(A_8,3)}
\newcommand{\vsf}{(A_{10},1)}
\newcommand{\vsg}{(A_{10},4)}
\newcommand{\vsh}{(A_8,2)}
\newcommand{\vsi}{(A_5,4)}
\newcommand{\vsj}{(A_6,2)}
\newcommand{\vsk}{(A_9,4)}
\newcommand{\vsl}{(A_{11},2)}
\newcommand{\vta}{(A_{10},3)}
\newcommand{\vtb}{(A_2,1)}
\newcommand{\vtc}{(A_1,3)}
\newcommand{\vtd}{(A_6,1)}
\newcommand{\vte}{(A_7,3)}
\newcommand{\vtf}{(A_{12},1)}
\newcommand{\vtg}{(A_{12},4)}
\newcommand{\vth}{(A_7,2)}
\newcommand{\vti}{(A_6,4)}
\newcommand{\vtj}{(A_1,2)}
\newcommand{\vtk}{(A_2,4)}
\newcommand{\vtl}{(A_{10},2)}
\newcommand{\vva}{(A_4,3)}
\newcommand{\vvb}{(A_7,1)}
\newcommand{\vvc}{(A_9,3)}
\newcommand{\vvd}{(A_3,1)}
\newcommand{\vve}{(A_2,3)}
\newcommand{\vvf}{(A_8,1)}
\newcommand{\vvg}{(A_8,4)}
\newcommand{\vvh}{(A_2,2)}
\newcommand{\vvi}{(A_3,4)}
\newcommand{\vvj}{(A_9,2)}
\newcommand{\vvk}{(A_7,4)}
\newcommand{\vvl}{(A_4,2)}

\begin{figure}[hb!]
  \scalebox{.55}{\input{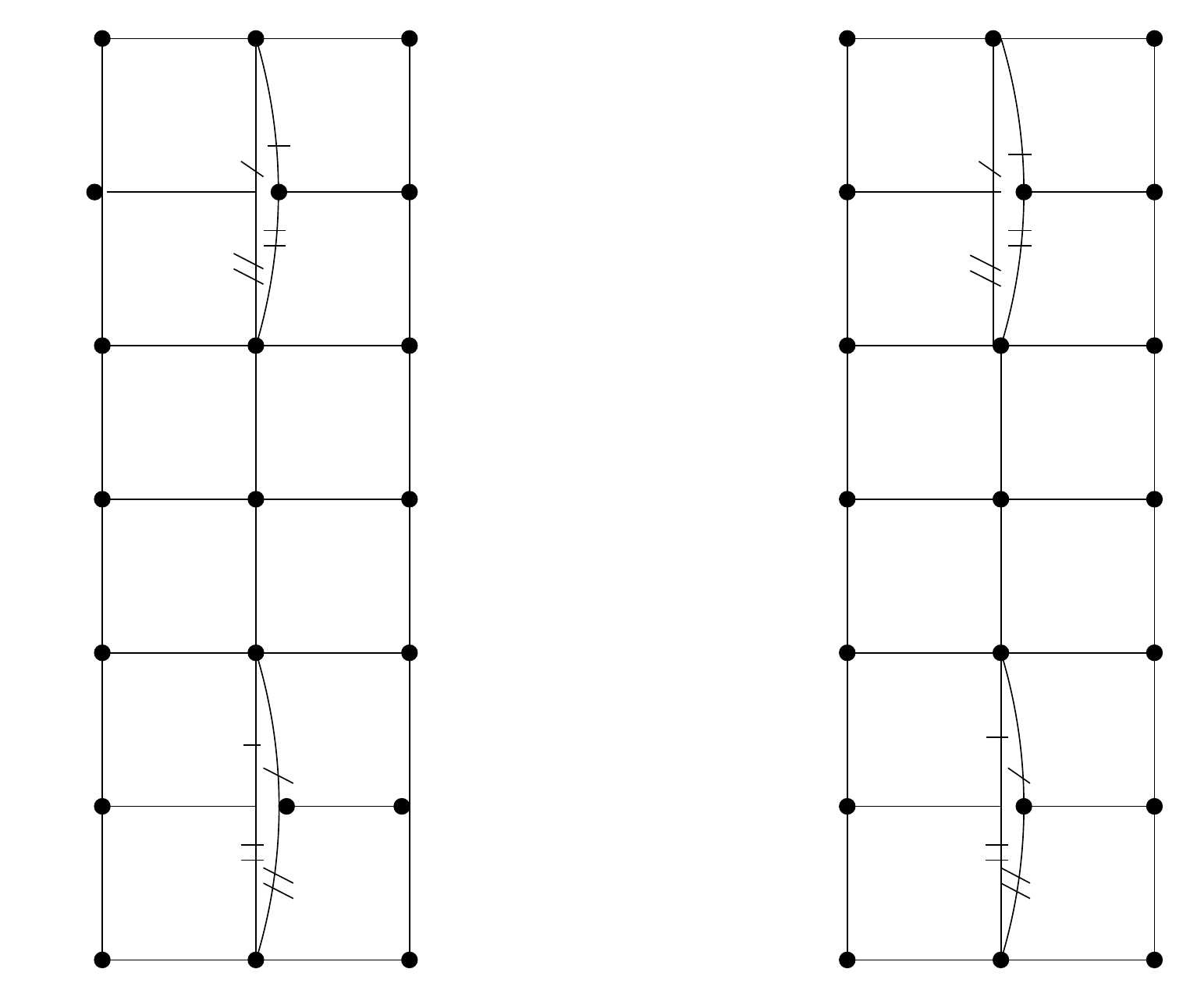_t}}\\[5mm]   
  \scalebox{.55}{\input{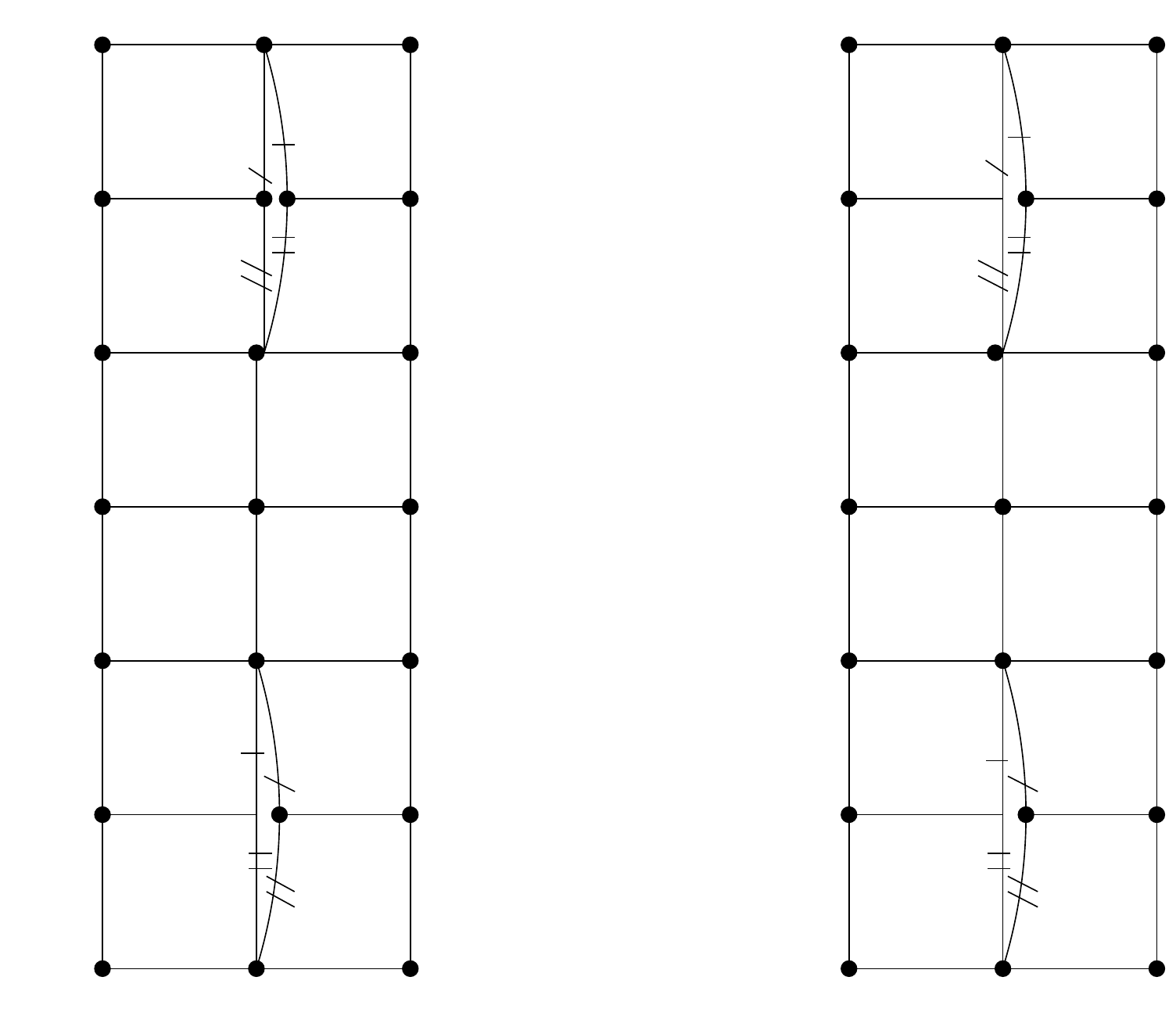_t}}\\[5mm]
\caption{
  The vertical cylinder-decomposition of the origami $X$. An edge labelled
  by a pair $(A_i,h)$ is glued to the suitable edge of Square $(A_i,h)$. 
  Slits with same labels within a double cylinder
  are glued. Opposite horizontal  unlabelled edges are glued.
  \label{xvertical}
}
\end{figure}

Observe from Figure~\ref{xhorizontal} that $X$ decomposes into eight 
horizontal cylinders 
which we have arranged in the figure into four {\em double cylinders}.
The vertices glue to the points:
$\A_1$, \ldots, $\A_4$, $\B_1$, \ldots, $\B_4$, $\D_1$, \ldots, $\D_4$ each of cone 
angle $6\pi$
and $\Cc_1$, \ldots, $\Cc_{12}$ having cone angle $2\pi$, respectively.  
In particular
we have twelve singularities of cone angle $6\pi$. Thus the origami
is in the stratum $\mathcal{H}(2^{[12]})$ and its genus is $13$.
The map 
$p: X \to \Ezwei$ which maps the square $(A,i)$ to the square $i$ on $\Ezwei$
defines a well-defined covering of degree 12.
Furthermore, the development vectors of the saddle connections
on $X$ span the lattice $\ZZ^2$. Hence the Veech group is contained in 
$\slzwei(\ZZ)$ and affine homeomorphisms preserve the vertices
of the squares that form $X$.
Finally, if  $B \in \PGammazwei/\PGammasechs$ is
the coset of $\hat{B}$, then we can directly see from the gluing rules 
in (\ref{gluexhorizontal}) and (\ref{gluexvertical})
and from the fact that $\PGammasechs$ is normal in $\PGammazwei$   that the 
map that sends the square $(A,i)$ to $(\hat{B}\cdot A,i)$ 
is well-defined, depends only on $B$  and respects the gluing rules.
Thus it induces
a well-defined translation $\tau_{B}$ of $X$. Since a translation has to 
preserve cone angles and the vertices of the squares, 
it permutes the $\Cc_i$'s. Furthermore
a translation is determined by the image of one of the $\Cc_i$'s,
since they are regular points. Therefore $X$ has 
not more than these 12  translations.
We summarise the properties of the origami $X$ in the following proposition.

\begin{proposition}\label{propx}
The origami $X$ defined above has the following properties:
\begin{enumerate}
\item[i)]
  Its genus is $13$ and it has twelve singularities 
  $\A_i$, $\B_i$, $\D_i$ ($i \in \{1, \ldots, 4\}$) of cone angle $6\pi$. Its group
  of translations $\emtrans(X)$  is 
  \[
  \emtrans(X) = \{\tau_B| B \in \PGammazwei/\PGammasechs\}
  \cong \PGammazwei/\PGammasechs
  \]
  (with $\tau_B$ defined as above), and $X/\emtrans(X) \cong \Ezwei$. In particular the covering 
  $p: X \to \Ezwei$ is normal
  with $\PGammazwei/\PGammasechs$ as group of deck transformations.
\item[ii)]
  The Veech group of $X$ is $\emslzwei(\ZZ)$.
\end{enumerate}
\end{proposition}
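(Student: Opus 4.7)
For part (i), my plan is to apply Euler's formula after reading the vertex identifications off the gluing rules \eqref{gluexhorizontal}--\eqref{gluexvertical}. With $F=48$ faces and $E=48\cdot 4/2=96$ edges, direct cycle-tracing on the gluing permutations shows that each of the twelve cone points $\A_1,\ldots,\A_4,\B_1,\ldots,\B_4,\D_1,\ldots,\D_4$ collects $12$ square corners (hence cone angle $6\pi$), while each of the twelve $\Cc_j$ collects $4$ corners (hence cone angle $2\pi$); the counts balance as $12\cdot 12+12\cdot 4=192=48\cdot 4$. This gives $V=24$, $\chi=-24$, and $g=13$. For the translation group, the normality of $\PGammasechs$ in $\PGammazwei$ from \eqref{quotient} makes left multiplication by any $\hat B\in\PGammazwei$ a well-defined bijection of the cosets of $\PGammasechs$; the map $(A,i)\mapsto(\hat B\cdot A,i)$ respects \eqref{gluexhorizontal}--\eqref{gluexvertical} since those rules act on the coset label by \emph{right} multiplication (by $T^{\pm 2}$, $L^{\pm 2}$), which commutes with the left action $\tau_B$. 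This produces $12$ translations $\tau_B$. Conversely, any translation of $X$ preserves cone angles, hence permutes the twelve regular vertices $\Cc_j$, and is determined by its value at any one such point, so $|\emtrans(X)|\le 12$, matching the lower bound. Since each $\tau_B$ lies in the deck group of $p:X\to\Ezwei$ and that deck group has order at most $\deg(p)=12$, we conclude $\emtrans(X)\cong\PGammazwei/\PGammasechs$ is the full deck group; in particular $p$ is normal and $X/\emtrans(X)\cong\Ezwei$.

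For part (ii), the inclusion $\Gamma(X)\subseteq\slzwei(\ZZ)$ is standard: the development vectors of the horizontal and vertical saddle connections of $X$ span $\ZZ\oplus\ZZ i$, so every affine homeomorphism of $X$ descends via $p$ to an affine homeomorphism of $\Ezwei$ (and hence of $E$), whose derivative must lie in $\slzwei(\ZZ)$. For the reverse inclusion, my plan is to construct explicit affine lifts of the two generators $T$ and $L$ of $\slzwei(\ZZ)$. Each $M\in\{T,L\}$ gives an affine homeomorphism $f_M$ of $E$ which lifts to $\Ezwei$ via $[\cdot 2]$; for $f_M$ to admit a further lift to $X$, the monodromy $\pi_1(\Ezwei^*)\to S_{12}$ of $p$ must be $(f_M)_*$-equivariant up to an inner automorphism of $S_{12}$. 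This reduces to the algebraic statement that the surjection $\PGammazwei\to\PGammazwei/\PGammasechs\cong\pslzwei(\ZZ/3\ZZ)$ through which the monodromy factors is preserved by the $\slzwei(\ZZ)$-action on $\PGammazwei$ (up to conjugation), which in turn follows from the characteristic description of $\PGammasechs$ as the preimage of $\{1\}\times\pslzwei(\ZZ/3\ZZ)$ under the reduction map $\pslzwei(\ZZ)\to\pslzwei(\ZZ/6\ZZ)$.

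The main obstacle is to turn this lifting criterion into actual affine maps. In practice, I would, for each $M\in\{T,L\}$, write down a permutation of the $48$ squares $(A,i)$ of $X$ whose linear part is $M$ and verify that it transforms \eqref{gluexhorizontal}--\eqref{gluexvertical} into their $M$-transforms (up to postcomposition with a translation $\tau_B$). The permutation acts on the $i$-index by the standard $\slzwei(\ZZ)$-action on the four squares of $\Ezwei$, and on the coset label $A$ by the induced action on $\PGammazwei/\PGammasechs$; both are directly readable from Figure~\ref{cayleygraph} and the table in Remark~\ref{matrices}. Once the lifts of $T$ and $L$ are exhibited, these two elements generate $\slzwei(\ZZ)\subseteq\Gamma(X)$, which combined with the first inclusion yields $\Gamma(X)=\slzwei(\ZZ)$.
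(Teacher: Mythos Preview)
Your treatment of part~(i) is correct and essentially identical to the paper's: the paper also reads off the vertex identifications and cone angles from the gluing rules, counts $V=24$, $E=96$, $F=48$ to get $g=13$, exhibits the twelve translations $\tau_B$ via left multiplication on cosets (commuting with the right multiplications in \eqref{gluexhorizontal}--\eqref{gluexvertical}), and bounds $|\emtrans(X)|\le 12$ by noting that a translation is determined by where it sends one regular point $\Cc_j$.

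For part~(ii) the paper takes a much more concrete route than you do. It does not invoke any lifting criterion or characteristic--subgroup argument; instead it uses the horizontal and vertical double--cylinder decompositions of $X$ (Figures~\ref{xhorizontal} and~\ref{xvertical}) and \emph{defines} $f_T$ (resp.\ $f_L$) directly as the affine shear that shifts the boundary edges of each horizontal (resp.\ vertical) double cylinder by~$3$. Consistency with the gluings is then checked by inspection of the figures.

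Your abstract argument has a real gap. You assert that the monodromy $\pi_1(\Ezwei^*)\to S_{12}$ ``factors through'' the surjection $\PGammazwei\to\PGammazwei/\PGammasechs$ and that the $\slzwei(\ZZ)$--action on $\pi_1(\Ezwei^*)$ induced by $f_M$ is compatible with conjugation on $\PGammazwei$. But $\pi_1(\Ezwei^*)\cong F_5$, not $\PGammazwei\cong F_2$; the relevant surjection $\pi_1(\Ezwei^*)\twoheadrightarrow\PGammazwei$ comes from the specific chain $\Ezwei^*\to\projpppp\hookrightarrow\projppp$ in~\eqref{lm-construction} and is \emph{not} $(f_M)_*$--equivariant in any obvious way, since $f_T$ and $f_L$ permute the four punctures $P,Q,R,S$ of $\Ezwei^*$ nontrivially (so they do not even fix the kernel of that surjection). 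The normality of $\PGammasechs$ in $\pslzwei(\ZZ)$ is not by itself enough here; you would still need to identify the monodromy with a map that manifestly carries an $\slzwei(\ZZ)$--action, and you have not done this.

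Your fallback paragraph also needs rethinking: an affine map with derivative $T$ is a shear, so it does not induce a permutation of the $48$ squares, and in particular the product formula $(A,i)\mapsto(\phi(A),\sigma(i))$ you suggest cannot describe it. What one \emph{can} describe combinatorially is the induced permutation of horizontal edges (equivalently, the shift on cylinder boundaries), and that is exactly what the paper writes down. If you want to salvage your approach, replace the ``permutation of squares'' by an explicit description of how $f_T$ acts on the bottom/top edges of each horizontal double cylinder; the paper's answer---shift by~$3$ on the boundaries---is the concrete object you are looking for.
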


\begin{proof}
{\bf i)} was shown in the paragraph before Proposition~\ref{propx}
and follows from the definition of $X$. To see {\bf ii)} we
define the affine homeomorphisms $f_T$ and $f_L$ as follows. $f_T$ has 
derivative $T$ and acts on the edges on the boundaries of the four 
horizontal double 
cylinders as a shift of 3. Hence e.g. the lower edge of the square $(A_1,1)$
is mapped to the lower edge of the square $(A_5,2)$.
The upper edge of $(A_1,1)$ is mapped to the upper edge of $(A_6,1)$,
sine the derivative is $T$.
I.e. the lower edge of $(A_6,3)$ is mapped to the lower edge of $(A_5,3)$.
The upper edge of $(A_6,3)$ is then mapped to the upper edge of $(A_5, 4)$.
You can read off from Figure~\ref{xhorizontal} that this 
is consistent with the gluings and thus gives a well-defined affine 
homeomorphism. 
On the middle lines of the double cylinders, it acts
on the upper edges of the lower cylinder as shift by 2 to the left;
on the lower edges of the upper cylinder it acts as shift by 2
to the right.
Similarly define $f_L$ to have derivative $L$ and
act on the edges of the boundaries of the four vertical double cylinders 
as shift of 3 (see Figure~\ref{xvertical}). The matrices $T$ and $L$ generate 
$\slzwei(\ZZ)$, thus the Veech group equals $\slzwei(\ZZ)$.
\end{proof}

In order to obtain the desired origami $Z$ in the end, we need 
how $\Gammazwei$ or more precisely  
\[
  \Gaffzwei = <f_{T^2}, f_{L^2}, f_{-I}> 
\]
acts on the set of vertices of $X$. Here we have by definition 
$f_{T^2}=f_T^2$, $f_{L^2}=f_L^2$ 
and $f_T, f_L, f_{-I}$ are defined as in the proof of Proposition~\ref{propx} ii). 
Observe that the image of $\Gaffzwei$ 
in $\slzwei(\ZZ)$ by the derivative map
is $\Gammazwei$. We study this action in the two following lemmas.
\begin{lemma}\label{actiononvertices}
The two affine homeomorphisms $f_T$ and $f_L$ 
act on the set $\mathcal{S}$ of vertices of $X$ as the following permutations:\\[1mm]
\[
\begin{array}{ll}
  f_T:& 
   (\Cc_1\; \Cc_6\; \Cc_5)(\Cc_{11}\; \Cc_9\; \Cc_3)(\Cc_{10}\; \Cc_2\; \Cc_8)(\Cc_{12}\; \Cc_{4}\; \Cc_{7})\\
      &\quad\quad \circ \; (\A_1\; \B_2)(\A_2\; \B_3)(\A_3\; \B_1)(\A_4\; \B_4).\\[1mm]
  f_L:&
   (\Cc_1\; \Cc_{11}\; \Cc_4)(\Cc_5\; \Cc_{10}\; \Cc_9)(\Cc_{6}\; \Cc_{12}\; \Cc_2)(\Cc_3\; \Cc_{8}\; \Cc_7)\\
      &\quad\quad \circ \; (\B_1 \; \D_4)(\B_2\; \D_2)(\B_3\; \D_3)(\B_4\; \D_1)
\end{array}
\]
${f_T}^2$ and ${f_L}^2$ fix the boundary of 
the horizontal, respectively
of the vertical double cylinders pointwise.
Furthermore $f_{-I} = (f_T \circ {f_L}^{-1})^3$ acts trivially 
on $\mathcal{S}$ and has derivative 
  $-I$.
\end{lemma}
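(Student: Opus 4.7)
The plan is to establish all three assertions by direct combinatorial tracking, exploiting the cylinder structures recorded in Figures~\ref{xhorizontal} and~\ref{xvertical}. By construction in Proposition~\ref{propx}, $f_T$ preserves each of the four horizontal double cylinders setwise and acts on each as follows: the top and bottom bordering edges of the double cylinder are shifted by three squares, and the two middle bordering edges are shifted by two squares (in opposite directions on the two sides). Since each horizontal cylinder has circumference six squares, a shift of three squares is exactly a half-period, so in particular $f_T^2$ acts as a shift of six squares, i.e.\ as the identity, on the outer boundaries of the double cylinders. The analogous statement for $f_L$ and the vertical cylinders follows from Figure~\ref{xvertical} in the same way, which already proves the second assertion of the lemma.

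To compute the explicit permutations, I would fix for each of the twenty-four vertex classes in $\mathcal{S}$ one representative corner of a labelled square in Figure~\ref{xhorizontal} (respectively Figure~\ref{xvertical}), apply the shift described above, and identify the image using the square labels together with the gluing rules (\ref{gluexhorizontal})--(\ref{gluexvertical}). For $f_T$, each horizontal double cylinder contains three $\Cc$-vertices on each outer boundary, which explains the four $3$-cycles on $\{\Cc_1,\ldots,\Cc_{12}\}$, and three pairs of corners of the form $(\A_i,\B_j)$ that get swapped by the half-period shift, which explains the four transpositions. The vertices $\D_1,\ldots,\D_4$ sit on the middle boundaries, and a case-by-case inspection shows that the shift by two there leaves each of them fixed. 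The derivation for $f_L$ is entirely parallel, using the vertical decomposition in Figure~\ref{xvertical} in place of the horizontal one.

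Finally, for the product identity $f_{-I}=(f_T\circ f_L^{-1})^3$, the derivative statement reduces to the matrix computation
\[
(T\cdot L^{-1})^3 \;=\; \bpm 0 & 1 \\ -1 & 1 \epm^{3} \;=\; -I,
\]
which is immediate. The fact that $f_{-I}$ acts trivially on $\mathcal{S}$ is then obtained by composing the six explicit permutations from the previous step and checking that the product is the identity on each of the $24$ vertex classes. The main obstacle throughout is purely bookkeeping: one must carefully cross-reference two distinct cylinder presentations of the same surface $X$ and keep track of which corner of which square represents each identified vertex class in $\mathcal{S}$; the figures have been designed precisely to make this tractable, and no conceptual difficulty arises beyond the careful tracing.
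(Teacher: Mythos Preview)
Your overall approach matches the paper's: the proof there simply says the permutations can be read directly off Figures~\ref{xhorizontal} and~\ref{xvertical}, together with the matrix identity $(T\cdot L^{-1})^3=-I$. Your elaboration of why $f_T^2$ and $f_L^2$ fix the outer boundaries (shift by $3$ twice on circumference $6$) is exactly the right mechanism, and your matrix calculation is correct.

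However, you have the geometry of the vertex classes backwards in one place. In the horizontal decomposition (Figure~\ref{xhorizontal}), the \emph{outer} boundaries of each double cylinder carry only the $\A_i$'s and $\B_i$'s (alternating), while the $\Cc_i$'s and the $\D_i$'s both lie on the \emph{middle} line. So the four $3$-cycles on the $\Cc$'s do not come from the half-period shift by $3$ on the outer boundary; they come from the shift by $2$ on the middle line (three distinct $\Cc$'s at alternating positions, cycled by a step of $2$ modulo $6$), the same shift that fixes the single $\D_i$ sitting at the remaining three positions. Correspondingly, the transpositions $(\A_i\,\B_j)$ arise from the shift by $3$ on the outer boundaries, where three $\A$'s and three $\B$'s alternate; each boundary yields three transpositions, and across the four double cylinders these agree to give the four transpositions listed. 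Your sentence ``three pairs \ldots\ which explains the four transpositions'' reflects this slip. Once you correct the placement of the $\Cc$'s, the rest of your bookkeeping goes through exactly as you describe.
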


\begin{proof}
This can be directly read off 
from Figure~\ref{xhorizontal} and Figure~\ref{xvertical}.
The short calculation $(T\cdot L^{-1})^3 = -I$ shows the claim about the 
derivative of $f_{-I}$.
\end{proof}

\begin{lemma}\label{actionx}
  The action of $\Gaffzwei$ on the set of vertices of $X$
  has the following properties:
  \begin{enumerate}
  \item[i)]
    $\Gaffzwei$ is the full pointwise stabiliser in the affine group $\emaff(X)$ 
    of the set 
    \[\mathcal{S}_{\mbox{\em \fs sing}} = \{\A_i, \B_i, \D_i|\, i \in \{1,\ldots, 4\}\}.\]
  \item[ii)]
    If we denote the vertex $\Cc_i$ also by $\Cc_{A_i}$ with 
    $A_i \in \empslzwei(\ZZ/6\ZZ)$ defined as in Remark~\ref{matrices},
    then $\Gaffzwei$ acts on the set $\{\Cc_i|\, i \in \{1,\ldots, 12\}\}$ by:
    $$f_1 =  f_{T^2}: \Cc_{A_i} \mapsto \Cc_{A_{i}\cdot y^{-1}},\quad 
    f_2 = f_{L^2}:  \Cc_{A_i} \mapsto \Cc_{A_{i}\cdot y^{-1}x} $$
        and
    $$f_3= f_{-I}: \Cc_{A_i} \mapsto \Cc_{A_i}.$$
	Here, the  
        images of $x$ and $y$ in  $\empslzwei(\ZZ/6\ZZ)$ are also denoted $x$ and $y$ ( by a slight abuse of notation).
        We may identify $R_{A_i}$ with the coset ${A_i}^{-1}\cdot\PGammasechs$ and then obtain an action of
        $\PGammazwei$ from the left.
      \item[iii)]
	The derivative map gives an isomorphism from $\Gaffzwei$ to
	$\Gammazwei$.
  \end{enumerate}
\end{lemma}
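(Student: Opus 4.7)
The plan is to prove the three assertions in the order (iii), (i), (ii), since they are tightly interrelated. For (iii), Lemma~\ref{actiononvertices} records $D(f_{T^2}) = T^2$, $D(f_{L^2}) = L^2$ and $D(f_{-I}) = -I$; since $T^2$, $L^2$ and $-I$ generate $\Gammazwei$, this gives $D(\Gaffzwei) = \Gammazwei$. The kernel of $D: \aff(X) \to \slzwei(\ZZ)$ is $\trans(X)$, of order $12$ by Proposition~\ref{propx}~(i), and any element of $\Gaffzwei$ with trivial derivative is a translation that fixes the singular point $\A_1$ (each of $f_{T^2}$, $f_{L^2}$ and $f_{-I}$ fixes $\A_1$ by Lemma~\ref{actiononvertices}), hence must be the identity; this proves injectivity.

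For (i), the inclusion $\Gaffzwei \subseteq \stab(\mathcal{S}_{\text{sing}})$ is immediate from the explicit cycle decompositions of $f_T$, $f_L$ and $f_{-I}$ in Lemma~\ref{actiononvertices}: squaring the $\A/\B$-cycles of $f_T$ and the $\B/\D$-cycles of $f_L$ yields the identity on $\mathcal{S}_{\text{sing}}$, while the remaining singular points not appearing in those cycles are already fixed by $f_T$ or $f_L$. For the reverse inclusion, let $f \in \aff(X)$ fix $\mathcal{S}_{\text{sing}}$ pointwise. Since the development vectors of saddle connections on $X$ span $\ZZ^2$ (see the paragraph preceding Proposition~\ref{propx}), $D(f) \in \slzwei(\ZZ)$, and $f$ descends via $p$ to an affine homeomorphism $\bar f$ of $\Ezwei$. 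The covering $p$ maps each $\A_i$ to $\A$, each $\B_i$ to $\B$ and each $\D_i$ to $\D$, so $\bar f$ fixes $\A$, $\B$ and $\D$ and hence lies in $\aff_1(\Ezwei) \cong \Gammazwei$. By (iii), choose $g \in \Gaffzwei$ with $D(g) = D(f)$; then $f \circ g^{-1}$ is a translation fixing $\A_1$ and therefore equal to the identity, giving $f = g \in \Gaffzwei$.

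For (ii), Lemma~\ref{actiononvertices} provides the cycle decompositions of $f_T$ and $f_L$ on $\{\Cc_1, \ldots, \Cc_{12}\}$, so those of $f_{T^2}$ and $f_{L^2}$ are obtained by squaring, and $f_{-I}$ acts as the identity on the $\Cc_i$. The twelve identities claimed for $f_{T^2}$ are then verified directly: for each $i \in \{1, \ldots, 12\}$, one reads off the image index $j$ with $f_{T^2}(\Cc_i) = \Cc_j$ from the squared cycles and checks the coset identity $A_i \cdot y^{-1} \equiv A_j \pmod{\PGammasechs}$ by reducing the matrix product $A_i \cdot y^{-1}$ modulo $3$ and matching it with the list of reductions of $A_1, \ldots, A_{12}$ from Remark~\ref{matrices} (which parametrize the quotient $\PGammasechs \backslash \PGammazwei \cong \pslzwei(\ZZ/3\ZZ)$ by the sequence~\eqref{quotient}); the twelve analogous checks are made for $f_{L^2}$ against $A_i \cdot y^{-1}x$. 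The final assertion of (ii) is then formal: passing from the representative $A_i$ to $A_i^{-1}$ converts right multiplication by $g$ into left multiplication by $g^{-1}$, so the right actions by $y^{-1}$ and $y^{-1}x$ translate into the left actions by $y$ and $x^{-1}y$ on the cosets $A_i^{-1} \cdot \PGammasechs$, compatibly with a genuine left action of $\PGammazwei$.

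I expect the main obstacle to be the bookkeeping in (ii): the structural identity $D(\Gaffzwei) = \Gammazwei$ from (iii) only determines the permutations on $\{\Cc_1, \ldots, \Cc_{12}\}$ up to composition with deck transformations of $p$, so pinning down the precise coset formulas $A_i \cdot y^{-1}$ and $A_i \cdot y^{-1}x$ genuinely requires the twelve-fold matrix verification. The remaining steps are short structural arguments resting on the ramification pattern of $p$ and the fact, established in the paragraph before Proposition~\ref{propx}, that every affine homeomorphism of $X$ descends through $p$ to $\Ezwei$.
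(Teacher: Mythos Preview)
Your overall strategy matches the paper's, but there is a genuine gap in the injectivity step you use in both (iii) and (i). You assert that a translation of $X$ fixing the singular point $\A_1$ must be the identity. This is false: since $p: X \to E[2]$ is a normal covering of degree $12$ and $\A$ has exactly four preimages $\A_1,\ldots,\A_4$ (each of cone angle $6\pi$, hence ramification index $3$), the inertia subgroup of $\A_1$ in $\trans(X) \cong \pslzwei(\ZZ/3\ZZ)$ has order $3$. So there are nontrivial translations fixing $\A_1$, and ``fixes $\A_1$, hence identity'' does not follow.

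The paper closes this gap by observing that the translation in question actually fixes all of $\mathcal{S}_{\text{sing}}$ (both $f$ and the chosen $f_0 \in \Gaffzwei$ do), and then using a saddle-connection rigidity argument: the lower edge of square $(A_1,1)$ is the unique saddle connection with development vector ${1 \choose 0}$ from $\A_1$ to $\B_1$, so a translation fixing both $\A_1$ and $\B_1$ must fix this edge and is therefore the identity. In your argument the information that the translation also fixes $\B_1$ is available (the generators of $\Gaffzwei$ fix all of $\mathcal{S}_{\text{sing}}$, as you note for (i)), but you never invoke it, and even once you do you still need an argument such as the saddle-connection one to conclude. Your treatment of (ii) by squaring the cycles from Lemma~\ref{actiononvertices} and checking the coset identities modulo $3$ is correct and is just a more computational rendering of what the paper reads off from the Cayley graph.
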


\begin{proof}
It directly follows from Lemma~\ref{actiononvertices} that
the generators $f_1$, $f_2$ and $f_3$ of 
$\Gaffzwei$ act trivially 
on $\mathcal{S}_{\mbox{\fs sing}}$. We have to further show that $\Gaffzwei$ is the full pointwise
stabiliser. Suppose that an affine homeomorphism $f \in \aff(X)$
acts trivially on $\mathcal{S}_{\mbox{\fs sing}}$. 
Let $\bar{f}$ be its descend to $E[2]$ via $p$.
Since $f$ fixes the points in $\mathcal{S}_{\mbox{\fs sing}}$
pointwise, $\bar{f}$ has to fix their images ${0\choose 0} + (2\ZZ)^2$,
${1\choose 0} + (2\ZZ)^2$, ${1\choose 1} + (2\ZZ)^2$ as well.
Hence the derivative $D(f)$ of $f$ is in the main congruence group
$\Gammazwei$. It follows that there
is some $f_0$ in $\Gaffzwei$ with the same derivative. Thus $f = \tau\circ f_0$
for some translation $\tau$. Since $f_0$ also preserves $\mathcal{S}_{\mbox{\fs sing}}$
pointwise, $\tau$ has to do it as well. The lower edge of the square
$(A_1, 1)$ is a saddle connection with developing
vector ${1\choose 0}$ which starts in $\A_1$ and ends in $\B_1$. It must
be mapped by $\tau$ to a saddle connection with the same vector
and the same starting and end point. But this edge is the only 
segment with this property. Hence it is fixed, $\tau$ is the identity
and $f$ is in $\Gaffzwei$. This shows {\bf i)}. 
The last argument in particular shows that $\Gaffzwei$ contains
only the trivial translation. Since the kernel of the derivative map $D$
consist  precisely of the translations, we obtain 
an isomorphism $D:\Gaffzwei \to \Gammazwei$.
Hence {\bf iii)} holds.
The statement in {\bf ii)} can be 
directly read off from the Cayley graph in Figure~\ref{cayleygraph} and
Lemma~\ref{actiononvertices}.
\end{proof}

\subsection{The origami \boldmath{$Y$}}
\label{step2}

We now achieve the second step, that is, to define the origami $Y$ and show
that it has the desired properties.

\begin{definition}\label{defy}
  Define the origami $Y$ as follows
  \begin{itemize}
  \item 
    Take $2k$ copies of the origami $X$. Label their squares by
    the elements $(A, h, j)$ where $(A,h)$ is the label it had in $X$ and
    $j \in \{1, \ldots, 2k\}$ is the number of the copy. 
  \item
    Slit them along the eight 
    vertical edges which are highlighted by black arrows in
    Figure~\ref{xhorizontal}.
  \item
    Reglue the slits as follows: If square $(A,h,j)$ is the square on the
    left of a slit marked by an arrow pointing upwards, 
    then glue it to $(r(A,h), j+1)$. If it is the
    square on the left of a slit marked by an arrow pointing downside, 
    then glue it to $(r(A,h),j-1)$.
    Here we denote by $r(A,h)$ the right neighbour of $(A,h)$
    in the original origami $X$.
  \end{itemize}
\end{definition}

One easily checks that $Y$ is the origami shown in Figure~\ref{yhorizontal} and 
in Figure~\ref{yvertical}.

\newcommand{\ips}{j+1)}
\renewcommand{\aaa}{(A_6,2,j)}\renewcommand{\bbb}{(A_1,1,j)}
\renewcommand{\ccc}{(A_1,2,j)}\renewcommand{\ddd}{(A_5,1,j)}
\renewcommand{\eee}{(A_5,2,}\renewcommand{\fff}{(A_6,1,}
\renewcommand{\gggg}{(A_6,4,}\renewcommand{\hhh}{(A_5,3,}
\renewcommand{\iii}{(A_5,4,j)}\renewcommand{\jjj}{(A_1,3,j)}
\renewcommand{\kkkk}{(A_1,4,j)}\renewcommand{\llll}{(A_6,3,j)}

\renewcommand{\mmm}{(A_9,2,j)}\renewcommand{\nnn}{(A_{11},1,j)}
\renewcommand{\ooo}{(A_{11},2,j)}\renewcommand{\ppp}{(A_3,1,j)}
\renewcommand{\qqq}{(A_3,2,j)}\renewcommand{\rrr}{(A_9,1,j)}
\renewcommand{\sss}{(A_9,4,j)}\renewcommand{\ttt}{(A_3,3,j)}
\renewcommand{\uuu}{(A_3,4,j)}\renewcommand{\vvv}{(A_{11},3,j)}
\renewcommand{\www}{(A_{11},4,j)}\renewcommand{\xxx}{(A_9,3,j)}

\renewcommand{\bluea}{(A_2,2,j)}\renewcommand{\blueb}{(A_{10},1,j)}
\renewcommand{\bluec}{(A_{10},2,j)}\renewcommand{\blued}{(A_8,1,j)}
\renewcommand{\bluef}{(A_8,2,j)}\renewcommand{\blueg}{(A_2,1,j)}
\renewcommand{\blueh}{(A_2,4,j)}\renewcommand{\bluei}{(A_8,3,j)}
\renewcommand{\bluej}{(A_8,4,j)}\renewcommand{\bluek}{(A_{10},3,j)}
\renewcommand{\bluel}{(A_{10},4,j)}\renewcommand{\bluem}{(A_2,3,j)}

\renewcommand{\reda}{(A_4,2,j)}\renewcommand{\redb}{(A_{12},1,j)}
\renewcommand{\redc}{(A_{12},2,j)}\renewcommand{\rede}{(A_7,1,j)}
\renewcommand{\redf}{(A_7,2,}\renewcommand{\redg}{(A_4,1,}
\renewcommand{\redh}{(A_4,4,}\renewcommand{\redi}{(A_7,3,}
\renewcommand{\redj}{(A_7,4,j)}\renewcommand{\redk}{(A_{12},3,j)}
\renewcommand{\redl}{(A_{12},4,j)}\renewcommand{\redm}{(A_4,3,j)}

\renewcommand{\rh}{\sst (A_8,2,j+1)}\renewcommand{\ri}{\sst(A_9,1,j+1)}
\renewcommand{\rj}{\sst(A_6,2,j)}\renewcommand{\rk}{\sst(A_{10},1,j)}
\renewcommand{\rl}{\sst(A_{11},2,j)}\renewcommand{\rrm}{\sst(A_5,1,j)}
\renewcommand{\ra}{\sst(A_5,4,j)}\renewcommand{\rb}{\sst(A_{11},3,j)}
\renewcommand{\rc}{\sst(A_{10},4,j)}\renewcommand{\re}{\sst(A_6,3,j)}
\renewcommand{\rf}{\sst(A_9,4,j+1)}\renewcommand{\rg}{\sst(A_8,3,j+1)}

\renewcommand{\ba}{\sst(A_1,4,j)}\renewcommand{\bb}{\sst(A_{12},3,j)}
\renewcommand{\bc}{\sst(A_{11},4,j)}\renewcommand{\bd}{\sst(A_5,3,j)}
\renewcommand{\bbf}{\sst(A_4,4,j)}\renewcommand{\bg}{\sst(A_3,3,j)}
\renewcommand{\bh}{\sst(A_3,2,j)}\renewcommand{\bi}{\sst(A_4,1,j)}
\renewcommand{\bj}{\sst(A_5,2,j)}\renewcommand{\bk}{\sst(A_{11},1,j)}
\renewcommand{\bl}{\sst(A_{12},2,j)}\renewcommand{\bm}{\sst(A_1,1,j)}

\renewcommand{\va}{\sst(A_6,4,j)}\renewcommand{\vb}{\sst(A_{10},3,j)}
\renewcommand{\vc}{\sst(A_{12},4,j)}\renewcommand{\ve}{\sst(A_1,3,j)}
\renewcommand{\vf}{\sst(A_2,4,j)}\renewcommand{\vg}{\sst(A_7,3,j)}
\renewcommand{\vh}{\sst(A_7,2,j)}\renewcommand{\vi}{\sst(A_2,1,j)}
\renewcommand{\vj}{\sst(A_1,2,j)}\renewcommand{\vk}{\sst(A_{12},1,j)}
\renewcommand{\vl}{\sst(A_{10},2,j)}\renewcommand{\vm}{\sst(A_6,1,j)}

\renewcommand{\ya}{\sst(A_7,4,j)}\renewcommand{\yb}{\sst(A_2,3,j)}
\renewcommand{\yc}{\sst(A_3,4,j)}\renewcommand{\ye}{\sst(A_4,3,j)}
\renewcommand{\yf}{\sst(A_8,4,j+1)}\renewcommand{\yg}{\sst(A_9,3,j+1)}
\renewcommand{\yh}{\sst(A_9,2,j+1)}\renewcommand{\yi}{\sst(A_8,1,j+1)}
\renewcommand{\yj}{\sst(A_4,2,j)}\renewcommand{\yk}{\sst(A_3,1,j)}
\renewcommand{\yl}{\sst(A_2,2,j)}\renewcommand{\ym}{\sst(A_7,1,j)}

\renewcommand{\Aone}{\A_1^j}\renewcommand{\Atwo}{\A_2^j}
\renewcommand{\Athree}{\A_3^j}\renewcommand{\Afour}{\A_4^j}
\renewcommand{\Bone}{\B_1^j}\renewcommand{\Btwo}{\B_2^j}
\renewcommand{\Bthree}{\B_3^j}\renewcommand{\Bfour}{\B_4^j}
\renewcommand{\Cone}{\Cc_1^j}\renewcommand{\Ctwo}{\Cc_2^j}
\renewcommand{\Cthree}{\Cc_3^j}\renewcommand{\Cfour}{\Cc_4^j}
\renewcommand{\Done}{\D_1^j}\renewcommand{\Dtwo}{\D_2^j}
\renewcommand{\Dthree}{\D_3^j}\renewcommand{\Dfour}{\D_4^j}
\renewcommand{\Cfive}{\Cc_5^{j+1}}\renewcommand{\Csix}{\Cc_6^j}
\renewcommand{\Cseven}{\Cc_7^{j+1}}\renewcommand{\Ceight}{\Cc_8^j}
\renewcommand{\Cnine}{\Cc_9^j}\renewcommand{\Cten}{\Cc_{10}^j}
\renewcommand{\Celeven}{\Cc_{11}^j}\renewcommand{\Ctwelve}{\Cc_{12}^j}

\newcommand{\Bonex}{\B_1^{j-1}}\newcommand{\Afourx}{\A_4^{j-1}}
\newcommand{\Btwox}{\B_2^{j-1}}\newcommand{\Atwox}{\A_2^{j-1}}
\newcommand{\Btwop}{\B_2^{j+1}}\newcommand{\Athreep}{\A_3^{j+1}}
\newcommand{\Bthreep}{\B_3^{j+1}}\newcommand{\Afourp}{\A_4^{j+1}}
\newcommand{\Donex}{\D_1^{j-1}}

\begin{figure}
\scalebox{.8}{\input{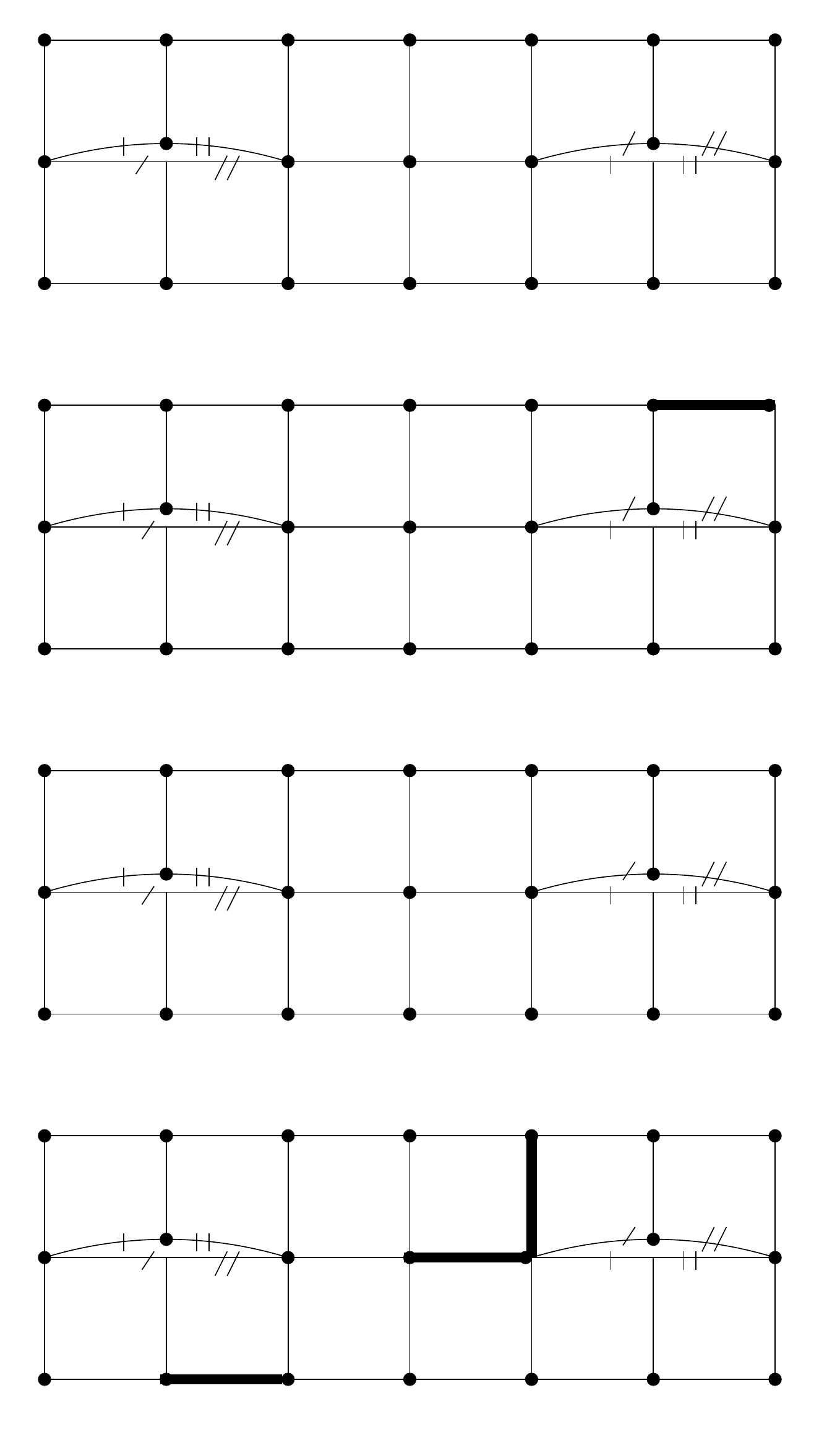_t}}
\caption{
  The $i$-th part of the decomposition into horizontal
  double cylinders of the origami $Y$.
  An edge labelled by 
  $(A_i,h,j)$ is glued to the suitable edge of Square $(A_i,h,j)$. 
  Slits with same labels within a double cylinder
  are glued. Opposite vertical unlabelled edges are glued. 
  The black bars
  indicate selected edges for a later  construction in 
  Section~\ref{proofoftheorem}.
  \label{yhorizontal} \label{ydch} \label{zexample}
}
\end{figure}

\newcommand{\jjp}{j)}
\newcommand{\jplus}{j+1}

\renewcommand{\aaa}{(A_6,2,}\renewcommand{\bbb}{(A_1,1,}
\renewcommand{\ccc}{(A_1,2,}\renewcommand{\ddd}{(A_5,1,}
\renewcommand{\eee}{(A_5,2,}\renewcommand{\fff}{(A_6,1,}
\renewcommand{\gggg}{(A_6,4,}\renewcommand{\hhh}{(A_5,3,}
\renewcommand{\iii}{(A_5,4,}\renewcommand{\jjj}{(A_1,3,}
\renewcommand{\kkkk}{(A_1,4,}\renewcommand{\llll}{(A_6,3,}

\renewcommand{\mmm}{(A_9,2,}\renewcommand{\nnn}{(A_{11},1,}
\renewcommand{\ooo}{(A_{11},2,}\renewcommand{\ppp}{(A_3,1,}
\renewcommand{\qqq}{(A_3,2,}\renewcommand{\rrr}{(A_9,1,}
\renewcommand{\sss}{(A_9,4,}\renewcommand{\ttt}{(A_3,3,}
\renewcommand{\uuu}{(A_3,4,}\renewcommand{\vvv}{(A_{11},3,}
\renewcommand{\www}{(A_{11},4,}\renewcommand{\xxx}{(A_9,3,}

\renewcommand{\bluea}{(A_2,2,}\renewcommand{\blueb}{(A_{10},1,}
\renewcommand{\bluec}{(A_{10},2,}\renewcommand{\blued}{(A_8,1,}
\renewcommand{\bluef}{(A_8,2,}\renewcommand{\blueg}{(A_2,1,}
\renewcommand{\blueh}{(A_2,4,}\renewcommand{\bluei}{(A_8,3,}
\renewcommand{\bluej}{(A_8,4,}\renewcommand{\bluek}{(A_{10},3,}
\renewcommand{\bluel}{(A_{10},4,}\renewcommand{\bluem}{(A_2,3,}

\renewcommand{\reda}{(A_4,2,}\renewcommand{\redb}{(A_{12},1,}
\renewcommand{\redc}{(A_{12},2,}\renewcommand{\rede}{(A_7,1,}
\renewcommand{\redf}{(A_7,2,}\renewcommand{\redg}{(A_4,1,}
\renewcommand{\redh}{(A_4,4,}\renewcommand{\redi}{(A_7,3,}
\renewcommand{\redj}{(A_7,4,}\renewcommand{\redk}{(A_{12},3,}
\renewcommand{\redl}{(A_{12},4,}\renewcommand{\redm}{(A_4,3,}

\renewcommand{\vfa}{(A_{12},3,j)}\renewcommand{\vfb}{(A_4,1,j+1)}
\renewcommand{\vfc}{(A_5,3,j+1)}\renewcommand{\vfd}{(A_1,1,j)}
\renewcommand{\vfe}{(A_3,3,j)}\renewcommand{\vff}{(A_{11},1,j)}
\renewcommand{\vfg}{(A_{11},4,j)}\renewcommand{\vfh}{(A_3,2,j)}
\renewcommand{\vfi}{(A_1,4,j)}\renewcommand{\vfj}{(A_5,2,j+1)}
\renewcommand{\vfk}{(A_4,4,j+1)}\renewcommand{\vfl}{(A_{12},2,j)}

\renewcommand{\vsa}{(A_{11},3,j)}\renewcommand{\vsb}{(A_9,1,j)}
\renewcommand{\vsc}{(A_6,3,j-1)}\renewcommand{\vsd}{(A_5,1,j-1)}
\renewcommand{\vse}{(A_8,3,j)}\renewcommand{\vsf}{(A_{10},1,j)}
\renewcommand{\vsg}{(A_{10},4,j)}\renewcommand{\vsh}{(A_8,2,j)}
\renewcommand{\vsi}{(A_5,4,j-1)}\renewcommand{\vsj}{(A_6,2,j-1)}
\renewcommand{\vsk}{(A_9,4,j)}\renewcommand{\vsl}{(A_{11},2,j)}

\renewcommand{\vta}{(A_{10},3,j)}\renewcommand{\vtb}{(A_2,1,j)}
\renewcommand{\vtc}{(A_1,3,j)}\renewcommand{\vtd}{(A_6,1,j+1)}
\renewcommand{\vte}{(A_7,3,j+1)}\renewcommand{\vtf}{(A_{12},1,j)}
\renewcommand{\vtg}{(A_{12},4,j)}\renewcommand{\vth}{(A_7,2,j+1)}
\renewcommand{\vti}{(A_6,4,j+1)}\renewcommand{\vtj}{(A_1,2,j)}
\renewcommand{\vtk}{(A_2,4,j)}\renewcommand{\vtl}{(A_{10},2,j)}

\renewcommand{\vva}{(A_4,3,j-1)}\renewcommand{\vvb}{(A_7,1,j-1)}
\renewcommand{\vvc}{(A_9,3,j)}\renewcommand{\vvd}{(A_3,1,j)}
\renewcommand{\vve}{(A_2,3,j)}\renewcommand{\vvf}{(A_8,1,j)}
\renewcommand{\vvg}{(A_8,4,j)}\renewcommand{\vvh}{(A_2,2,j)}
\renewcommand{\vvi}{(A_3,4,j)}\renewcommand{\vvj}{(A_9,2,j)}
\renewcommand{\vvk}{(A_7,4,j-1)}\renewcommand{\vvl}{(A_4,2,j-1)}

\renewcommand{\Cseven}{\Cc_7^j}
\newcommand{\Dones}{\D_1^{j+1}}\newcommand{\Dthrees}{\D_3^{j+1}}
\newcommand{\Dfourx}{\D_4^{j-1}}
\newcommand{\Cfiveorig}{\Cc_5^j}

\begin{figure}
  \scalebox{.67}{\input{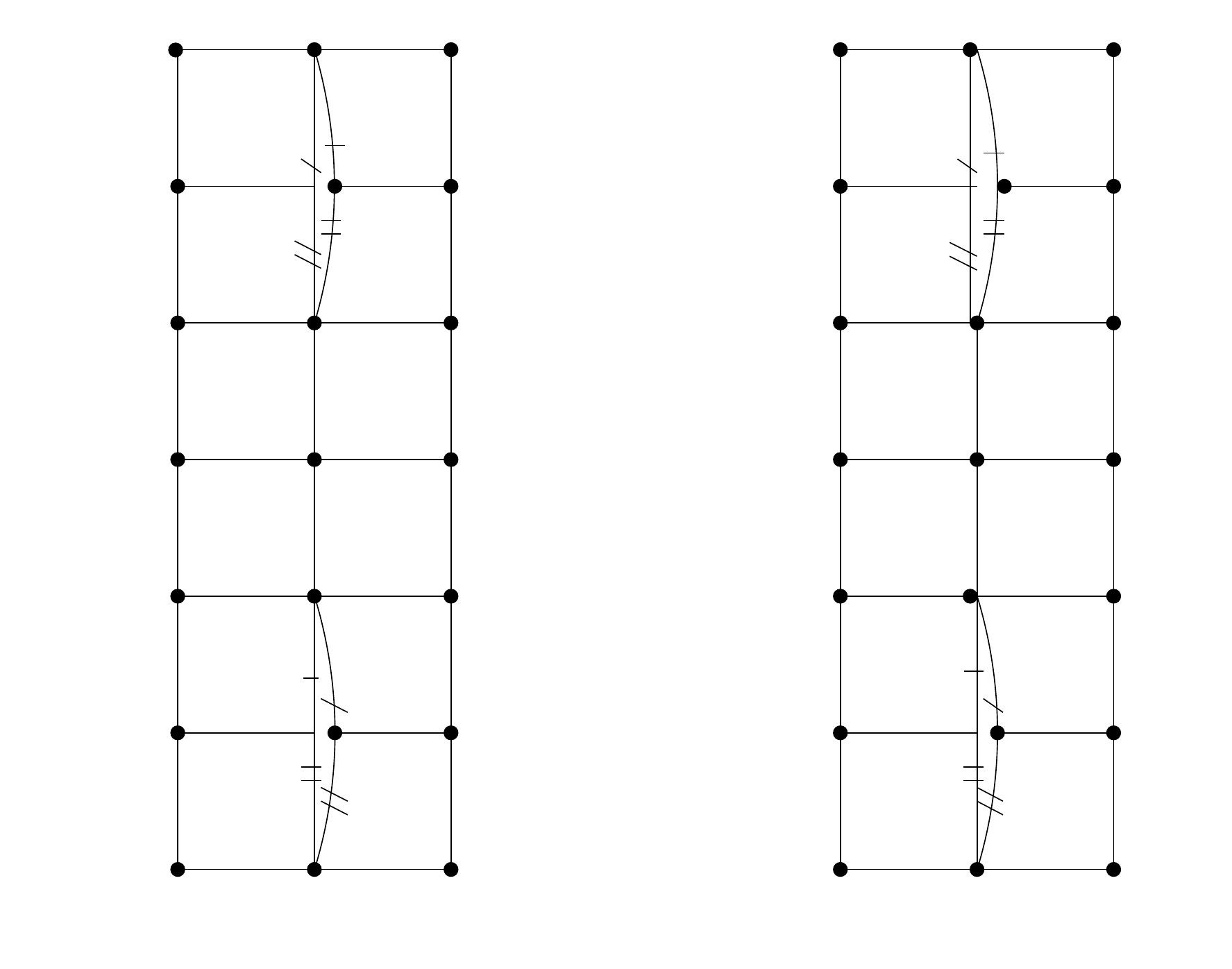_t}}\\[3mm]   
  \scalebox{.67}{\input{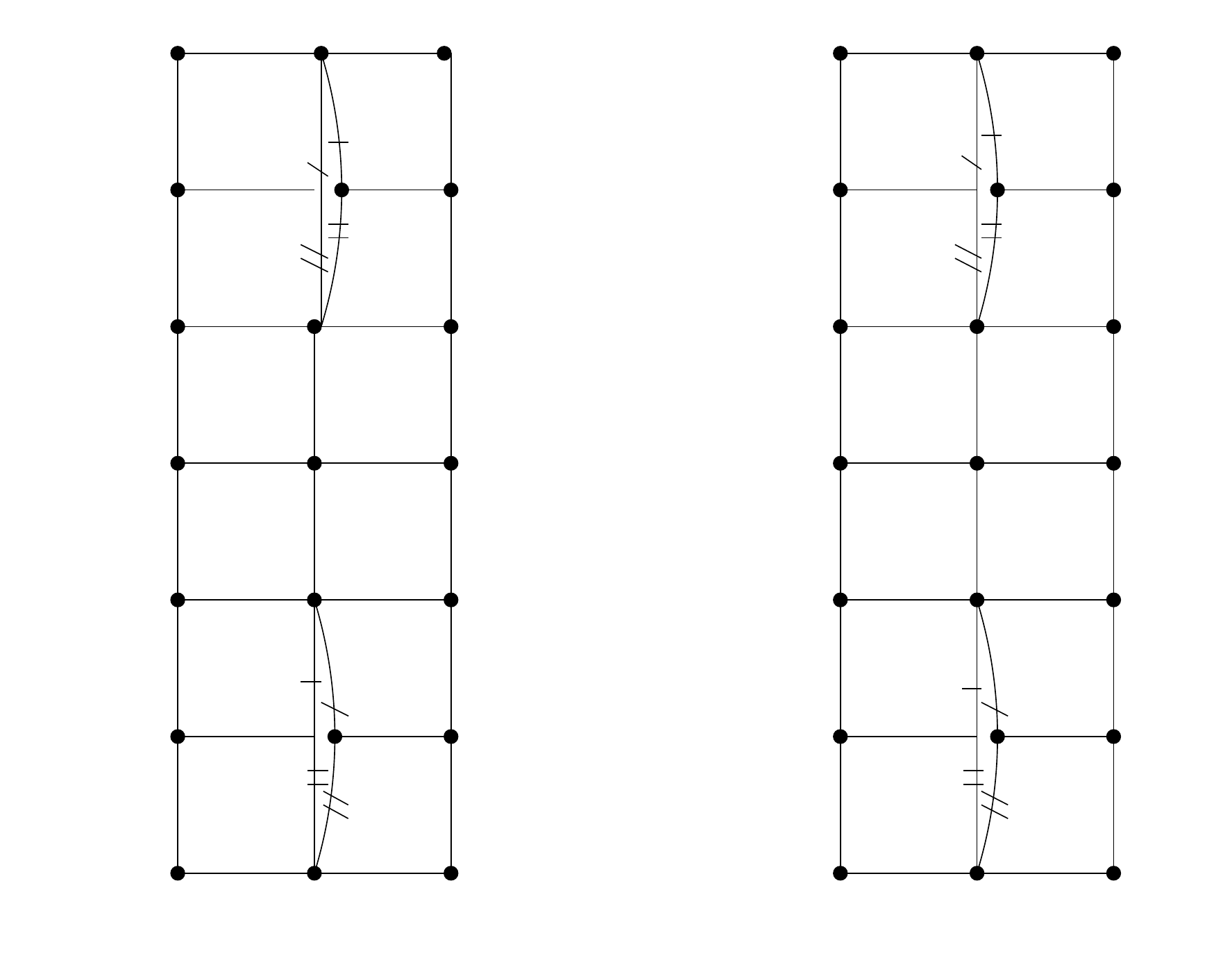_t}}\\[2mm]
\caption{
  The $i$-th part of the decomposition of the origami $Y$ into 
  vertical cylinders. 
  An edge labelled by 
  $(A_i,h,j)$ is glued to the suitable edge of Square $(A_i,h,j)$. 
  Slits with same labels within a double cylinder
  are glued. Opposite horizontal unlabelled edges are glued. 
  \label{yvertical}
}
\end{figure}

 
\begin{lemma}\label{origamiy}
  For the origami $Y$ constructed in Definition~\ref{defy}, we have:
  \begin{enumerate}
  \item[i)]
    The origami $Y$ allows a normal covering $q: Y \to X$ of degree $2k$
    with Galois group $\emdeck(Y/X) \cong \ZZ/(2k\ZZ)$.
  \item[ii)]
    $Y$ decomposes into $4\cdot 2k$ horizontal double cylinders
    which are isometric to those on $X$. The same is true
    for the vertical direction.
  \item[iii)] 
    The covering $q:Y \to X$ is unramified. Thus the point $\Cc$
    has $12\cdot 2k$ preimages 
    under the map $p \circ q$. The points $\A$, $\B$ and $\D$ have $4\cdot 2k$
    preimages, respectively. 
  \item[iv)] 
    The genus of $Y$ is $24\cdot k + 1$.
  \end{enumerate}
\end{lemma}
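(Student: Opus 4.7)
The overall plan is to set up the projection $q: Y \to X$ explicitly on the level of labelled squares, use it to deduce (i) and (iii) directly, then analyse the effect of the slit/reglue on cylinder-structure to get (ii), and finally apply Riemann--Hurwitz for (iv).

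First I would define $q: Y \to X$ by $(A, h, j) \mapsto (A, h)$, which forgets the copy-index. To see this is well-defined I need to verify that each regluing introduced in Definition~\ref{defy} is compatible with the horizontal and vertical gluings inherited from $X$. Since the left neighbour of a ``slit'' square in $Y$ is by construction sent by $q$ to the square that was its left neighbour in $X$, and the new right neighbour $(r(A,h), j \pm 1)$ projects to $r(A,h)$, which \emph{is} the original right neighbour in $X$, all eight slit-gluings descend correctly. The other gluings are the same as in $X$ copy-by-copy. Thus $q$ is a well-defined covering. The map $\tau: Y \to Y$, $(A, h, j) \mapsto (A, h, j+1)$, respects every gluing (horizontal, vertical, and slit, because replacing $j$ by $j+1$ shifts every regluing consistently), so $\tau$ generates a free $\ZZ/(2k\ZZ)$-action whose quotient is precisely $X$. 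This proves (i), and since $\tau$ acts freely, the covering $q$ is unramified.

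For (ii), observe that the horizontal gluings of $Y$ within a fixed copy $j$ agree with those of $X$, except when a slit is crossed, in which case one jumps to copy $j \pm 1$. Consequently, following a horizontal core curve of any of the four double cylinders of $X$ and tracking the net change in $j$, one obtains an integer $\delta \in \ZZ$ after returning to the starting square. The cylinder through $(A, h, j)$ in $Y$ then has length $2k/\gcd(\delta, 2k)$ times the length of the $X$-cylinder, and there are $\gcd(\delta, 2k)$ many such $Y$-cylinders over each $X$-cylinder. The claim that there are exactly $4 \cdot 2k$ horizontal double cylinders, each isometric to its $X$-counterpart, is equivalent to $\delta \equiv 0 \pmod{2k}$ along each of the four double cylinders; by the very choice of slit locations in Figure~\ref{xhorizontal}, each horizontal double cylinder contains equally many upward and downward arrows, so $\delta = 0$. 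The same verification for the eight slits in the vertical picture (Figure~\ref{xvertical}) yields the corresponding statement for vertical cylinders. This checking step, by direct inspection of the figures, is the main concrete obstacle.

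For (iii), since $q$ is unramified of degree $2k$, every fibre of $q$ has exactly $2k$ points; combining with the fact from Proposition~\ref{propx} that $\Cc$ has 12 preimages under $p$ and each of $\A$, $\B$, $\D$ has 4 preimages, the counts $12 \cdot 2k$ and $4 \cdot 2k$ follow. For (iv), Riemann--Hurwitz for the unramified cover $q$ gives
\[
2g(Y) - 2 \;=\; 2k \cdot (2g(X) - 2) \;=\; 2k \cdot (2 \cdot 13 - 2) \;=\; 48k,
\]
hence $g(Y) = 24k + 1$, as claimed.
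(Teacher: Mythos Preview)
Your argument is correct and follows essentially the same approach as the paper: the map $q:(A,h,j)\mapsto(A,h)$ is the paper's covering, your shift $\tau$ realises the cyclic deck group, and the genus comes from Riemann--Hurwitz exactly as in the paper. Your drift analysis for part (ii) is a more explicit version of what the paper leaves to direct inspection of Figures~\ref{yhorizontal} and~\ref{yvertical}.

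One small remark on the vertical direction: there are no slits affecting the vertical gluings (the eight slits in Definition~\ref{defy} are all along vertical edges, hence modify only horizontal neighbours). So the vertical cylinders of $X$ lift trivially to $2k$ isometric copies in $Y$, and no drift computation is needed there; your phrase ``the eight slits in the vertical picture'' is a slip, though the conclusion is unaffected. Likewise, your deduction of unramifiedness from the freeness of the $\tau$-action is a clean alternative to the paper's check that the monodromy around each vertex vanishes.
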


\begin{proof}
  For {\bf i)} define $q$ by mapping Square $(A,h,j)$ of $Y$ to Square $(A,h)$
  of $X$. {\bf ii)} can be read off from Figure~\ref{xhorizontal}. The horizontal
  double cylinders are shown
  in Figure~\ref{yhorizontal} and the vertical ones in Figure~\ref{yvertical}.  
  It can be furthermore directly read off from Figure~\ref{xhorizontal} 
  that the monodromy 
  around each vertex is 0. Thus the covering is unramified and
  {\bf iii)} holds. Finally, the Riemann Hurwitz formula
  then gives us $2g_Y - 2 = (2g_X -2)\cdot 2k$, where $g_Y$ and $g_X$ are the genus
  of $Y$ and $X$, respectively. The claim follows, since $g_X = 13$ by 
  Proposition~\ref{propx}.
\end{proof}

\begin{remark}\label{notationcvertices}
We label the $12\cdot 2k$ preimages of $\Cc$ on the surface $Y$ by  
 $\Cij$ ($i \in \{1, \ldots,12\}$, $j \in \{1, \ldots, 2k\}$)
where $\Cij$ is the left lower vertex of the square $(A_i,3,j)$.
Recall the identification from Remark~\ref{identify} of
the cosets of the group
$G_6(2k) = \gamma(\PG{2k})$ with 
$\mathcal{C}(\PGammazwei:\PGammasechs)\times \ZZ/(2k\ZZ)$.
This assigns to $(A_i,j)$ a coset $G_6(2k)\cdot g_{i,j}$ with $g_{i,j} \in \PGammazwei$. We will
denote the vertex $\Cij$ also by $\Cc_{G_6(2k){g_{i,j}}}$.
Finally, we label the preimages of $\A_i$, $\B_i$ and $\D_i$
by $\Aij$, $\Bij$ and $\Dij$ ($i \in \{1, \ldots, 4\}$ and $j \in \{1,\ldots, 2k\}$), respectively, 
as shown in Figure~\ref{yhorizontal} and Figure~\ref{yvertical}.
Note, in particular, that $q(\Aij) = \A_i$
and similarly for the $\B$'s, $\Cc$'s and $\D$'s.
\end{remark}

Observe from Figure~\ref{yhorizontal} that the affine
homeomorphisms $f_T^2$ and $f_L^2$ on $X$ lift via $q$ to 
affine maps $\hat{f}^2_T$ and $\hat{f}^2_L$ on $Y$  which 
again fix the boundaries of
the horizontal double cylinders, respectively the boundaries
of the vertical double cylinders,  pointwise. Furthermore
you can read off from Figure~\ref{yhorizontal} that the rotation
of angle $\pi$ around one of the points $\Cim$ is well-defined. 
This defines an affine
homeomorphism $\hat{f}_{-I}$ with derivative $-I$
which fixes all points $\Cc^m_i$.

\begin{lemma}\label{actiony}
  The action of the group $\Gaffzweihat  = \; <\hat{f}_T^2,$, $\hat{f}_L^2, \hat{f}_{-I}>$
  has the following properties:
  \begin{enumerate}
    \item[i)] $\Gaffzweihat$ acts trivially on the set 
      $$\widehat{\mathcal{S}_{\mbox{\em \fs sing}}} = 
      \{\Aij, \Bij, \Dij|i \in\{1,\ldots, 4\}, j \in \{1, \ldots, 2k\}\}.$$
      \item[ii)] $\Gaffzweihat$
	is the full pointwise stabiliser \emstab($\widehat{\mathcal{S}_{\mbox{\fs {\em sing}}}}$) 
	in $\emaff(Y)$ of 
	the set $\widehat{\mathcal{S}_{\mbox{\em \fs sing}}}$. 
	Furthermore, $\Gaffzweihat$ is isomorphic to $\Gamma_2$.
    \item[iii)] 
      Recalling the notation from Remark~\ref{notationcvertices},
      we have, for all $g \in \PGammazwei$:
      \[
         \begin{array}{lcl}
	   \hat{f}_{T}^2(\Cc_{G_6(2k)\cdot g}) &=&  \Cc_{G_6(2k)\cdot gy^{-1}}, \\
	   \hat{f}_{L}^2(\Cc_{G_6(2k)\cdot g}) &=& \Cc_{G_6(2k)\cdot g y^{-1}x}, \\
	   \hat{f}_{-I}(\Cc_{G_6(2k)\cdot g}) &=& \Cc_{G_6(2k)\cdot g},  \mbox{ i.e. $\hat{f}_{-I}$ 
             fixes all vertices $\Cc_{G_6(2k)\cdot g}$}.
	 \end{array}
      \]

      \item[iv)] Let $\varphi: \Gaffzweihat \to \PGammazwei$ be the homomorphism
	defined by $\varphi(\hat{f}_T^2) = y$, $\varphi(\hat{f}_L^2) = x^{-1}y$
	and $\varphi(\hat{f}_{-I}) = $ {\em id}. Then we have for $\hat{f} \in \Gaffzweihat$
        and $g \in \PGammazwei$: 
	\[
	\begin{array}{l}
	  \hat{f}(\Cc_{G_6(2k)\cdot g}) =  \Cc_{G_6(2k)\cdot g \varphi(f)^{-1}}, 
	\end{array}
	\]
	Observe (for later use) that $\varphi = \gamma \circ D$, where 
        $D: \Gaffzweihat \to \PGammazwei \cong F_2$ is the derivative map and 
        $\gamma$ is the automorphism $x \mapsto y$, $y\mapsto x^{-1}y$
        defined in Lemma~\ref{change}.
        This in particular ensures that $\varphi$ is well-defined. 
  \end{enumerate}
\end{lemma}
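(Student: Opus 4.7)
The plan is to leverage Lemma~\ref{actionx} and Proposition~\ref{propx}, together with the fact that $q:Y\to X$ is a normal covering and the three generators of $\Gaffzweihat$ were constructed as explicit lifts of the corresponding generators of $\Gaffzwei$. For part~(i), $\hat{f}_T^2$ and $\hat{f}_L^2$ were built so as to fix the boundaries of the horizontal, respectively vertical, double cylinders of $Y$ pointwise, and the preimages $\Aij,\Bij,\Dij$ lie on these boundaries; hence they fix $\widehat{\mathcal{S}_{\mbox{\fs sing}}}$ pointwise. The map $\hat{f}_{-I}$ is by construction a $\pi$-rotation about a chosen $\Cim$ and descends under $q$ to $f_{-I}$ on $X$, which by Lemma~\ref{actiononvertices} acts trivially on $\mathcal{S}$; inspection of Figure~\ref{yhorizontal} shows the rotation extends consistently across the slits, so it fixes each $\Aij,\Bij,\Dij$ individually rather than merely preserving the fibers of $q$ over them.

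For part~(ii), the inclusion $\Gaffzweihat\subseteq \stab(\widehat{\mathcal{S}_{\mbox{\fs sing}}})$ is part~(i). Conversely, given $f\in\stab(\widehat{\mathcal{S}_{\mbox{\fs sing}}})$, I would push it down via $q$ to $\bar f\in\aff(X)$ (possible because the derivative vectors of saddle connections on $Y$ span $\ZZ^2$ and $q$ is a translation covering). Since $q(\Aij)=\A_i$ and similarly for the $\B$'s and $\D$'s, $\bar f$ fixes $\mathcal{S}_{\mbox{\fs sing}}$ pointwise, and Lemma~\ref{actionx}~(i) yields $\bar f\in\Gaffzwei$ and $D(f)=D(\bar f)\in\Gammazwei$. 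Choosing $f_0\in\Gaffzweihat$ with the same derivative (the derivatives of the three generators $\hat{f}_T^2,\hat{f}_L^2,\hat{f}_{-I}$ generate $\Gammazwei$), the product $f\cdot f_0^{-1}$ is a translation of $Y$ that fixes a singular point, hence is the identity. This also shows $D$ is injective on $\Gaffzweihat$, so $D:\Gaffzweihat\to\Gammazwei$ is the claimed isomorphism.

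The main obstacle is part~(iii), which I would establish by chasing each generator on the $\Cc$-vertices directly from Figures~\ref{yhorizontal} and~\ref{yvertical}. For $\hat{f}_T^2$, Lemma~\ref{actionx}~(ii) gives the action on $X$ as $\Cc_{A_i}\mapsto \Cc_{A_iy^{-1}}$; the new content on $Y$ is to identify the cyclic index $j'\in\ZZ/(2k\ZZ)$ in which the image sits. Unpacking the identification of Remark~\ref{identify}, the verification reduces to showing that the index of the image of $\Cij$ under $\hat{f}_T^2$ equals precisely the drift $\delta_{g_{i,j}y^{-1}}$ attached to the coset $G_6(2k)\cdot g_{i,j}y^{-1}$. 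The delicate point is to keep track of how drifts are updated whenever the action crosses the curve $c_2$ of Figure~\ref{cuttwo}, and this is exactly the combinatorial bookkeeping already recorded in Proposition~\ref{actiononcosets}. The argument for $\hat{f}_L^2$ is analogous using the vertical cylinder decomposition of Figure~\ref{yvertical}, and for $\hat{f}_{-I}$ the formula is immediate since the $\pi$-rotation fixes each $\Cim$.

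Finally, part~(iv) is formal once (iii) is known. The map $\varphi=\gamma\circ D$ is the composition of a group homomorphism and a group automorphism, hence a well-defined group homomorphism; on generators its values match the derivative formulas $D(\hat{f}_T^2)=x$, $D(\hat{f}_L^2)=y$, $D(\hat{f}_{-I})=\mathrm{id}$, so that $\varphi(\hat{f}_T^2)=y$, $\varphi(\hat{f}_L^2)=x^{-1}y$ and $\varphi(\hat{f}_{-I})=\mathrm{id}$ as claimed. The orbit formula $\hat f(\Cc_{G_6(2k)\cdot g})=\Cc_{G_6(2k)\cdot g\varphi(f)^{-1}}$ then agrees with part~(iii) on the three generators; because both sides depend homomorphically on $\hat f$ (using the right $\PGammazwei$-action on $\mathcal{C}(\PGammazwei:G_6(2k))$ by right multiplication by the inverse), the formula extends to all of $\Gaffzweihat$.
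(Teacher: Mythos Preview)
Your treatment of parts (i), (iii), and (iv) matches the paper's argument: for (i) the generators are Dehn twists fixing the double-cylinder boundaries pointwise (and $\hat f_{-I}$ is read off from Figure~\ref{yhorizontal}); for (iii) one reads the permutation of the $R_i^j$ off Figures~\ref{yhorizontal}--\ref{yvertical} and matches it against Proposition~\ref{actiononcosets}; and (iv) is the formal induction on generators.

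The gap is in (ii), in two places. First, you assert that $f\in\stab(\widehat{\mathcal{S}_{\mathrm{sing}}})$ descends to $X$ via $q$ ``because the derivative vectors of saddle connections on $Y$ span $\ZZ^2$ and $q$ is a translation covering.'' That lattice condition only guarantees descent to $E$ (and hence to $E[2]$); descent through the intermediate cover $q:Y\to X$ would require $f$ to normalise $\mathrm{Deck}(Y/X)\cong\ZZ/2k\ZZ$, which you have not checked. The paper sidesteps this entirely: it descends $f$ to $E[2]$, notes that the image fixes $P,Q,S$ (since $f$ fixes their preimages pointwise), and concludes $D(f)\in\Gamma_2$ directly, with no passage through $X$.

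Second, ``a translation of $Y$ that fixes a singular point is the identity'' is false in general: the sheet-swap on a double cover of a torus branched over two points is a nontrivial translation fixing both cone points. What is true, and what the paper uses, is that your $\tau=f\cdot f_0^{-1}$ actually fixes \emph{all} of $\widehat{\mathcal{S}_{\mathrm{sing}}}$ (both $f$ and $f_0$ do, the latter by part~(i)); in particular it fixes $P_1^1$ and $Q_1^1$, hence the unique horizontal length-$1$ saddle connection between them, forcing $\tau=\mathrm{id}$. With these two corrections your argument for (ii) coincides with the paper's.
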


\begin{proof}
  The claims on $\hat{f}_{-I}$ can be directly read off
  from Figure~\ref{yhorizontal}.
  Recall further that $\hat{f}_T^2$ and $\hat{f}_L^2$ act as horizontal (respectively vertical)
  multi partial Dehn twists on the horizontal (resp. vertical) double cylinders shown
  in Figure~\ref{yhorizontal} (resp. Figure~\ref{yvertical}) which fixes the boundaries and
  shifts by 2 on the middle lines. This immediately gives {\bf i)}. 
   For the proof of {\bf ii)} 
  we proceed similarly as in Lemma~\ref{actionx}: the only
  translation on $Y$ that fixes  
  $\widehat{\mathcal{S}_{\mbox{\fs sing}}}$
  pointwise is the identity, since such 
  a translation has to fix with $\A^1_1$ and $\B^1_1$
  also the horizontal edge between them. With the same arguments
  as in Lemma~\ref{actionx}, {\bf ii)} follows.
  Thus the map 
  $D:\stab(\widehat{\mathcal{S}_{\mbox{\fs sing}}}) \to \Gamma_2$
  is injective. It is surjective, since $T^2$, $L^2$ and $-I$
  generate $\Gamma_2$. 
  
  To prove {\bf iii)}, we read off from Figure~\ref{ydch}
  that $\hat{f}_T^2$ acts on the $\Cij$'s by the following
  permutations:
  \[
    \hat{f}_T^2: 
    (\Cc^j_1,  \Cc^{j+1}_5, \Cc^{j}_6)(\Cc^j_{11}, \Cc^j_{3}, \Cc^j_9)
    (\Cc^j_{10}, \Cc^j_8, \Cc^{j}_2)(\Cc^j_{12}, \Cc^{j+1}_7, \Cc^j_4),
  \]
  where  $j$ runs through  $\{1, \ldots, 2k\}$.
  By Proposition~\ref{actiononcosets},  this
  is precisely what multiplication with $y^{-1}$ does. In the same way,
  one obtains from Figure~\ref{yvertical}:
  \[
  \hat{f}_L^2:
  (\Cc^j_1, \Cc^j_{4}, \Cc^j_{11})(\Cc^j_5, \Cc^j_9, \Cc^j_{10})
  (\Cc^j_6, \Cc^j_2, \Cc^j_{12})(\Cc^j_3, \Cc^j_{7},\Cc^j_8)
  \]
  and again reads off from Proposition~\ref{actiononcosets} that this 
  corresponds to multiplication by $y^{-1}x$.
  
  Finally, {\bf iv)} follows from {\bf ii)}
  and {\bf iii)} by induction on the generators of $\Gaffzweihat$.
\end{proof}

We in particular
obtain the following corollary, which will be helpful 
in Section~\ref{step3}.

\begin{corollary} (to Lemma~\ref{actiony})\label{cor-util}\\
Let $\hat{f}$ be an affine homeomorphism of $Y$ such that 
$D(\hat{f}) \in \Gamma_2$. If we have for some $j \in \{1, \ldots, 2k\}$
that $\hat{f}(\A^j_1) = \A^j_1$ and $\hat{f}(\B^{j}_1) = \B^{j'}_1$
with some $j' \in \{1, \ldots, 2k\}$, then $\hat{f}$ is in $\Gaffzweihat$.
\end{corollary}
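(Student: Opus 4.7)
The plan is to show that $\hat{f}$ differs from a suitable element of $\Gaffzweihat$ by a translation of $Y$, and then to argue that this translation has to be trivial because it fixes a cone singularity.

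By Lemma~\ref{actiony}(ii) the derivative map $D\colon \Gaffzweihat \to \Gamma_2$ is an isomorphism, so since the hypothesis gives $D(\hat{f}) \in \Gamma_2$ we may choose the unique $\hat{g} \in \Gaffzweihat$ with $D(\hat{g}) = D(\hat{f})$. Setting $\tau := \hat{g}^{-1} \circ \hat{f}$, we obtain an element $\tau \in \aff(Y)$ with $D(\tau) = I$, i.e.\ a translation of the translation surface $Y$. This algebraic reduction is the first main step; it uses only the derivative hypothesis and Lemma~\ref{actiony}(ii).

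Next I would use the point-fixing hypothesis. By Lemma~\ref{actiony}(i), $\hat{g}$ fixes every point of $\widehat{\mathcal{S}_{\mbox{\fs sing}}}$ pointwise, and in particular $\hat{g}(\A_1^j) = \A_1^j$. Combined with the assumption $\hat{f}(\A_1^j) = \A_1^j$, this yields $\tau(\A_1^j) = \A_1^j$.

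The final step is to conclude $\tau = \textrm{id}_Y$ from the fact that $\tau$ is a translation of $Y$ fixing a cone point of angle $6\pi$. This is a standard feature of translation surfaces: in any regular flat chart $\tau$ has the form $z \mapsto z + c$, and the translation vector $c$ is globally well-defined because the chart transitions on a translation surface are themselves translations; a single fixed point (even a cone singularity, after passing to the canonical local ramified cover that resolves it) then forces $c = 0$, so $\tau = \textrm{id}_Y$ on the connected surface $Y$. Consequently $\hat{f} = \hat{g} \in \Gaffzweihat$. I do not foresee a serious obstacle: the two steps above are essentially bookkeeping on the structure already supplied by Lemma~\ref{actiony}. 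Note that the hypothesis $\hat{f}(\B_1^j) = \B_1^{j'}$ is not strictly used in this line of argument; a posteriori it is automatic from $\hat{f} \in \Gaffzweihat$ together with Lemma~\ref{actiony}(i), which in fact forces $j' = j$.
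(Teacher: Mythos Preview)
Your reduction to a translation $\tau = \hat{g}^{-1}\circ\hat{f}$ is correct and matches the paper's approach. The gap is in the final step: the assertion that a translation of a translation surface fixing a single point --- here the cone point $\A_1^j$ --- must be the identity is \emph{false} in general. Your argument that ``the translation vector $c$ is globally well-defined'' only yields $\tau=\text{id}$ on a neighbourhood of a \emph{regular} fixed point, because there one may use the same flat chart at the point and at its image. At a cone point of angle $2\pi n$ there is no single flat chart, and a translation fixing it may act locally as a nontrivial deck transformation of the $n$-fold branched model (a ``rotation by $2\pi$'') while still having derivative $I$. A concrete counterexample: the deck involution of a double cover of a torus branched over two points is a nontrivial translation fixing both cone points. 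You give no reason why this phenomenon is excluded on $Y$; note that already on $X$ the translation group $\trans(X)\cong\pslzwei(\ZZ/3\ZZ)$ has order $12$ but permutes only four points $\A_1,\dots,\A_4$, so nontrivial translations fixing a given $\A_i$ are to be expected.

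This is precisely why the paper uses the second hypothesis $\hat{f}(\B_1^{j})=\B_1^{j'}$, which you dismissed as unnecessary. Knowing both $\tau(\A_1^j)=\A_1^j$ and $\tau(\B_1^{j})=\B_1^{j'}$, the paper observes that the horizontal saddle connection of length $1$ from $\A_1^j$ to $\B_1^j$ must be sent by the translation $\tau$ to a horizontal segment of the same length and direction starting at $\A_1^j$ and ending at some $\B_1^{j'}$; an inspection of Figure~\ref{ydch} shows the only such segment is the original one, forcing $j'=j$ and hence $\tau=\text{id}$ (since $\tau$ now fixes a whole saddle connection, in particular regular points). Fixing a saddle connection is exactly what eliminates the rotational ambiguity at the cone point that your argument overlooks.
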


\begin{proof}
By Lemma~\ref{actiony} ii) there is an element $\hat{f}'$  in $\Gaffzweihat$
with $D(\hat{f}') = D(\hat{f})$. From $\hat{f}' \in \Gaffzweihat$ it follows
that $\hat{f}'(\A^j_1) = \A^j_1$ and $\hat{f}'(\B^{j'}_1) = \B^{j'}_1$. Hence 
$\tau = (\hat{f}')^{-1}\circ \hat{f}$ fixes $\A^j_1$ and we have 
$\tau(\B^{j}_1) = \B^{j'}_1$.
Since $\tau$ is a translation, the horizontal segment of length 1 between 
$\A^j_1$ and $\B^j_1$ must be mapped to a segment of same direction 
and same length. We read off from Figure~\ref{ydch} that $j' = j$.
Thus the translation $\tau$ fixes the horizontal segment from 
$\A^j_1$ to $\B^{j}_1$ and is therefore
the identity. This shows the claim.
\end{proof}

It turns out that we cannot only lift $(f_{T})^2$ and $(f_L)^2$
from $X$ to $Y$ but also $f_T$ and $f_L$, which needs a bit more care.
We do not need this for the proof of Theorem~\ref{t.A}. But since
it is an interesting statement on its own, we include the proof
in the following proposition. 

\begin{proposition}\label{vgY}
The Veech group of $Y$ is SL$(2,\ZZ)$. 
\end{proposition}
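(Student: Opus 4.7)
My plan is to establish both inclusions $\Gamma(Y)\subseteq \slzwei(\ZZ)$ and $\Gamma(Y)\supseteq \slzwei(\ZZ)$. The first is essentially free: since $Y$ is an origami whose saddle connection development vectors span $\ZZ^2$ (each edge of a square has integer vector $(1,0)$ or $(0,1)$), the Veech group $\Gamma(Y)$ is automatically contained in $\slzwei(\ZZ)$, exactly as for $X$ (compare \cite[Lemma 2.3]{HL} and the discussion after Definition~\ref{defy}).

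For the reverse inclusion, I already have $\Gaffzweihat \subset \aff(Y)$ with derivative image $\Gammazwei$ (Lemma~\ref{actiony} ii)), so $\Gammazwei \subseteq \Gamma(Y)$. Since $T$ and $L$ generate $\slzwei(\ZZ)$ and $T^2, L^2, -I \in \Gamma(Y)$, it suffices to produce two explicit affine homeomorphisms $\hat f_T, \hat f_L \in \aff(Y)$ with derivatives $T$ and $L$. Concretely, I will define $\hat f_T$ on the squares of $Y$ by a formula of the form $\hat f_T(A,h,j) = (f_T(A,h),\;\sigma_T(j))$, where $\sigma_T : \{1,\dots,2k\} \to \{1,\dots,2k\}$ is a cyclic shift to be determined, and analogously for $\hat f_L$. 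On the eight horizontal double cylinders of $Y$ corresponding to a fixed $j$, $\hat f_T$ then acts exactly as $f_T$ acted on the four horizontal double cylinders of $X$ (using the multi Dehn-twist description from the proof of Proposition~\ref{propx} ii)), just with a prescribed jump between copies. Once the shifts $\sigma_T, \sigma_L$ are correct, the horizontal, vertical, and unslit gluings are inherited from $X$ and need no separate verification.

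The real content is to choose $\sigma_T$ and $\sigma_L$ so that the regluings along the eight distinguished vertical slits of Definition~\ref{defy} are respected. This is equivalent to the cohomological condition that the monodromy homomorphism $m_Y : \pi_1(X) \to \ZZ/(2k)$ defining the cyclic cover $q: Y\to X$ satisfies $m_Y \circ (f_T)_* \in (\ZZ/(2k))^\ast \cdot m_Y$, and likewise for $f_L$. Equivalently, if $c\subset X$ denotes the multicurve given by the eight slits, one has to check that $f_T(c)$ and $f_L(c)$ are homologous to $c$ up to sign and boundaries (in the relevant relative $H_1$). In terms of the description of $\PGammasechs$ in Section~\ref{s.CB}, this amounts to checking that the automorphism of $\PGammazwei$ induced by conjugation with $T$, respectively $L$, together with the identification from Lemma~\ref{actionx} ii) sends the kernel of $m_2: \PGammasechs \to \ZZ/(2k\ZZ)$ to itself; I expect this to hold because the homomorphism $m_2$ (as well as its twin $m$) was designed so that its kernel is invariant under the full action of $\slzwei(\ZZ)$ on $\PGammasechs$ by outer automorphisms, not merely under the $\Gammazwei$-action that we have used so far.

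The main obstacle is precisely this last verification. A clean route is to represent $m_2$ geometrically as the oriented intersection with $c_2$ (as was already observed after Definition~\ref{defpg-1}) and then to track the image of $c_2$, or of the eight slits of Figure~\ref{xhorizontal}, under the homeomorphisms of $X$ induced by $T$ and $L$. A shorter but less transparent route is the direct combinatorial one: assign candidate shifts $c_T, c_L \in \ZZ/(2k\ZZ)$ by reading off the permutation of slits from Figures~\ref{xhorizontal} and~\ref{xvertical}, and verify that the resulting $\hat f_T, \hat f_L$ respect all gluings by a finite check. Either way, once the shifts are correct, $\hat f_T$ and $\hat f_L$ are well-defined affine homeomorphisms of $Y$ with derivatives $T$ and $L$, which completes the proof.
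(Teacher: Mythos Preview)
Your overall strategy---show $\Gamma(Y)\subseteq\slzwei(\ZZ)$ from the lattice of saddle connections, then exhibit lifts $\hat f_T,\hat f_L\in\aff(Y)$ of $f_T,f_L$---is exactly the paper's. But your explicit ansatz is wrong and would not succeed.

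You propose $\hat f_T(A,h,j)=(f_T(A,h),\sigma_T(j))$ with a \emph{single} cyclic shift $\sigma_T$. Neither feature survives. In the paper's construction the lift acts on the sheet index by \emph{reflections} $j\mapsto c-j$, not translations, and the constant $c$ depends on which horizontal double cylinder the square sits in: on the $4\cdot 2k$ double cylinders $\hat Z_{a,j}$ the paper sets
\[
\hat f_T:\ \hat Z_{1,j}\mapsto\hat Z_{1,\tau(j)},\quad \hat Z_{4,j}\mapsto\hat Z_{4,\tau(j)},\quad \hat Z_{2,j}\mapsto\hat Z_{2,\rho(j)},\quad \hat Z_{3,j}\mapsto\hat Z_{3,\rho(j)},
\]
where $\tau(j)=2-j$ and $\rho(j)=3-j$ in $\ZZ/(2k\ZZ)$ (written in the paper as products of transpositions). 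The consistency conditions one must verify across the slits then reduce to the two identities $\tau(j)=\rho(j+1)$ and $\tau(j)+1=\rho(j)$, which $\tau,\rho$ satisfy. The story for $\hat f_L$ is analogous, with $\tau$ on $\hat C_{1,j},\hat C_{3,j}$ and $\rho$ on $\hat C_{2,j},\hat C_{4,j}$.

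In your cohomological language this means $m_Y\circ(f_T)_*=-\,m_Y$, i.e.\ the unit in $(\ZZ/(2k\ZZ))^*$ is $-1$, not $1$. So the kernel is indeed preserved, and your criterion is correctly stated; but the lift then acts on fibres by an affine involution rather than a translation, which is why no single ``cyclic shift'' can work and why the constant must vary with the cylinder (the labeling of sheets by $j$ is only a local trivialisation). Your expectation that $m_2$ was ``designed'' to be $\slzwei(\ZZ)$-invariant is not something established earlier in the paper, so it still requires the explicit check---which is precisely the finite verification the paper carries out.
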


\begin{proof}
  We define lifts of $f_T$ and $f_L$ as follows:\\
  We denote the $4\cdot(2k)$ horizontal double cylinders by
  $\hat{Z}_{1,1}$, \ldots, $\hat{Z}_{4,2k}$ (see Figure~\ref{ydch}).
  Consider the two permutations 
  \[
  \begin{array}{lcl}
    \tau &=& (2,\ 2k)(3,\ 2k-1)\ldots(k,\ k+2) \; \mbox{ and }\\
    \rho &=& (1,\ 2)(3,\ 2k)(4,\ 2k-1)\ldots(k+1,\ k+2)\\
    
  \end{array}
  \]
  Define the map $\hat{f}_T$ as lift of $f_T$ that permutes
  the double cylinders $\hat{Z}_{1,j}$ and $\hat{Z}_{4,j}$  
  ($j\in \{1, \ldots, 2k\}$)
  by the permutation $\tau$, 
  and the $\hat{Z}_{2,j}$'s and $\hat{Z}_{3,j}$'s by 
  the permutation $\rho$, respectively, 
  i.e.$\hat{f}_T$ maps  $\hat{Z}_{1,j}$ to $\hat{Z}_{1,\tau(j)}$,  
  $\hat{Z}_{2,j}$ to $\hat{Z}_{2,\rho(j)}$,  
  $\hat{Z}_{3,j}$ to $\hat{Z}_{3,\rho(j)}$ and finally $\hat{Z}_{4,j}$ to $\hat{Z}_{4,\tau(j)}$. 
  We have to check
  that this is well-defined with respect to the gluings of the double cylinders. 
  This can be read
  off from Figure~\ref{ydch} as follows: 
  Recall that $f_T$ (resp. $f_L$) acts on the boundary of the horizontal (resp. vertical)
  cylinders as shift by 3.
  Thus e.g. the top edge 
  of square $(A_6,4,j+1)$, which lies in cylinder $\hat{Z}_{1,j}$, 
  is mapped by $\hat{f}_T^2$ to the top edge 
  of square $(A_1,3,\tau(j))$ in cylinder $\hat{Z}_{1,\tau(j)}$. 
  Square $(A_6,4,j+1)$
  is glued to Square $(A_9,2,j+1)$ in cylinder $\hat{Z}_{2,j+1}$. 
  The lower edge of Square $(A_9,2,j+1)$ 
  is mapped to the lower edge of Square 
  $(A_3,1,\rho(j+1))$. Thus we need that $\tau(j)= \rho(j+1)$. 
  Similarly the other
  five top edges of the double cylinder $\hat{Z}_{1,j}$ lead to the conditions:
  \[
  \begin{array}{l}
    \tau(j) = \rho(j+1),\; \tau(j) = \tau(j), \;
    \tau(j)+1 = \rho(j),\\ \tau(j) + 1 = \rho(j), 
    \mbox{ and } \tau(j)= \tau(j).
    \end{array}
  \]
  Furthermore, doing the same calculations for the 
  other three cylinders, we obtain equivalent relations. Thus
  $\hat{f}_T$ is well-defined, since $\tau$ and $\sigma$ 
  satisfy these conditions.\\
  The definition of $\hat{f}_L$ works in a similar way. Consider
  the double cylinders $\hat{C}_{1,j}$, \ldots, $\hat{C}_{4,j}$ (see Figure~\ref{yvertical}).
  Define the map $\hat{f}_L$ as lift of $f_L$ that permutes
  the double cylinders $\hat{C}_{1,j}$ and 
  $\hat{C}_{3,j}$  ($j\in \{1, \ldots, 2k\}$)
  by the permutation $\tau$, and the $\hat{C}_{2,j}$'s and 
  $\hat{C}_{4,j}$'s by 
  the permutation $\rho$. Similarly as above
  one obtains for each edge that lies on 
  the boundary of the vertical double cylinders an
  equation. One easily sees  that all non-trivial
  equations are equivalent to:
  \[\forall j:\, \rho(j) = \tau(j)+1 \mbox{ and } \rho(j+1)= \tau(j)\]
  Again $\rho$ and $\tau$ satisfy these conditions.
\end{proof}

\subsection{A criterion for the desired origamis}\label{step3}

In this section we present a sufficiency condition for coverings
$Z$ of the origami $Y$ from Section~\ref{step2}, that the
Veech group of $Z$ is contained in $\thegroup$. 
In Section~\ref{proofoftheorem}, we will present some explicit examples for 
such origamis $Z$.\\

Let $h: X_1 \to X_2$ be a finite covering of Riemann surfaces.
Recall for the following that the {\em ramification data}
of a point $P \in X_2$ consists of 
the ramification indices of all preimages
of $P$ counted with multiplicity. We denote this multiple set
by $\rmd(P,h)$. It will be a crucial point in the proof of 
Proposition~\ref{mainprop} that if $h$ is a translation covering
of finite translation surfaces (recall that we allow by definition
singular points on the surface) and $\hat{f}$ is an affine homeomorphism
of $X_1$ which descends to $f$ on $X_2$, then we have for all points $P \in X_2$:
$\rmd(f(P),h) = \rmd(P,h)$, i.e. a descend preserves ramification data.

\begin{proposition}\label{mainprop}
  Let $Y$ be the origami from Definition~\ref{defy}.
  Recall that from the construction we had coverings $q: Y \to X$
  and $p:X \to E[2]$ of degree $2k$ and $12$, respectively.
  Suppose that $r:Z \to Y$ is a finite covering
  such that 
  \begin{enumerate}    
    \item[A)]
      $\emrmd(\A, p \circ q \circ r)$, $\emrmd(\B, p \circ q \circ r)$, 
      $\emrmd(\Cc, p \circ q \circ r)$ and $\emrmd(\D, p \circ q \circ r)$
      are pairwise distinct.
    \item[B)]
      $\emrmd(\A_1,q \circ r) \neq \emrmd(\A_i, q\circ r)$ and 
      $\emrmd(\B_1,q \circ r) \neq \emrmd(\B_i, q\circ r)$ for $i \in \{2,3,4\}$.

    \item[C)]
      For some $j \in \{1, \ldots, 2k\}$ we have:
      $\emrmd(\A^j_1,r) \neq \emrmd(\A^{j'}_1,r)$ for all $j' \in \{1, \ldots, 2k\}$,
      $j' \neq j$.
    \item[D)]
      $\emrmd(\Cc^1_1,r) \neq \emrmd(\Cc^j_i,r)$ 
      for $j \in \{1,\ldots, 2k\}$, $i \in \{1,\ldots,12\}$ and $(i,j) \neq (1,1)$.
  \end{enumerate}
Then, the Veech group of $Z$ is a subgroup of $\thegroup$. 
\end{proposition}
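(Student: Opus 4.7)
The plan is to let $f\in\aff(Z)$ and then use the four ramification conditions A)--D) as a progressive filter, each one forcing $f$ to descend one more step along the tower $Z\xrightarrow{r} Y\xrightarrow{q} X\xrightarrow{p} E[2]$ and each one pinning its action on a progressively smaller distinguished subset of vertices.  At the end, an element of $\Gaffzweihat$ fixing the specific vertex $\Cc^1_1$ will translate, via Lemma~\ref{actiony}(iv) and Lemma~\ref{change}, into the desired membership $D(f)\in\thegroup$.

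Concretely, I would first use that $Z$ is an origami factoring through $E[2]$, so $f$ descends to $\bar f\in\aff(E[2])$ by \cite[Prop.~2.6]{Sc2}; since descents preserve ramification data, condition A) forces $\bar f$ to fix each of $\A,\B,\Cc,\D$ individually, which already gives $D(f)\in\Gammazwei$.  A parallel diagram-chase using the factorisation $p\circ q\circ r=p\circ(q\circ r)$ produces a further descent $\tilde f\in\aff(X)$ satisfying $p\circ\tilde f=\bar f\circ p$, and $\tilde f$ then permutes the fibres $\{\A_1,\dots,\A_4\}$ and $\{\B_1,\dots,\B_4\}$; by condition B), the points $\A_1,\B_1$ are singled out by their ramification multisets under $q\circ r$, so $\tilde f(\A_1)=\A_1$ and $\tilde f(\B_1)=\B_1$.

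The hard part will be to push the descent one step further, from $X$ to $Y$, because $q:Y\to X$ is only a cyclic $\ZZ/(2k\ZZ)$-cover and a priori a lift of $\tilde f$ to $Y$ need not exist---or could exist as $2k$ incompatible lifts differing by deck transformations.  My approach here is to invoke condition C): among the $2k$ points of the fibre $q^{-1}(\A_1)=\{\A^{j'}_1\}$, on which the deck group of $q$ acts freely and transitively, the specific point $\A^j_1$ is the only one whose fibre $r^{-1}(\A^j_1)$ has the prescribed ramification multiset.  Preservation of ramification data by $f$ then forces $f$ to carry $r^{-1}(\A^j_1)$ to itself setwise, and this simultaneously secures the existence of a descent $\hat f\in\aff(Y)$ and selects the unique lift of $\tilde f$ satisfying $\hat f(\A^j_1)=\A^j_1$.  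This is the technical heart of the argument---the step flagged in the paper's outline as requiring the cyclic structure of $\PG{2k}$---and I expect it to demand the most careful bookkeeping, in particular because the existence of a lift is equivalent to $\tilde f_*$ preserving $\ker(m_2)\subset\pi_1(X^*)$.

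With $\hat f\in\aff(Y)$ in hand, the rest is clean: $D(\hat f)=D(f)\in\Gammazwei$, $\hat f(\A^j_1)=\A^j_1$ by construction, and $\hat f(\B^j_1)=\B^{j'}_1$ for some $j'$ (since $\tilde f(\B_1)=\B_1$ and $q\circ\hat f=\tilde f\circ q$), so Corollary~\ref{cor-util} yields $\hat f\in\Gaffzweihat$.  Finally, condition D) uniquely characterises $\Cc^1_1$ by its ramification multiset under $r$, so $\hat f(\Cc^1_1)=\Cc^1_1$; identifying $\Cc^1_1$ with the identity coset of $G_6(2k)$ via Remark~\ref{notationcvertices} and applying Lemma~\ref{actiony}(iv) gives $G_6(2k)\cdot\varphi(\hat f)^{-1}=G_6(2k)$, i.e.\ $\varphi(\hat f)\in G_6(2k)$.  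Since $\varphi=\gamma\circ D$ and $G_6(2k)=\gamma(\PG{2k})$ by Lemma~\ref{change}, this yields $D(f)=D(\hat f)\in\PG{2k}$, that is $D(f)\in\thegroup$.
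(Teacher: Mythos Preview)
Your overall architecture---use A) to land in $\Gammazwei$, use B) to fix $\A_1,\B_1$, use C) to pick out $\A^j_1$, use D) to fix $\Cc^1_1$, then finish via Corollary~\ref{cor-util} and Lemma~\ref{actiony}(iv)---matches the paper's, and your endgame (the last paragraph) is correct and identical to the paper's. The problem lies in the two descent steps.

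The phrase ``a parallel diagram-chase \dots\ produces a further descent $\tilde f\in\aff(X)$'' is not a proof. An affine homeomorphism of $Z$ that descends to $E[2]$ has no a priori reason to descend to an intermediate cover $X$: that would require $f$ to respect the fibres of $q\circ r$, and nothing in condition A) gives you this. The paper handles both descents by first \emph{lifting} $f$ to $\tilde f$ on the universal cover $\HH$ (always possible) and then invoking Lemma~\ref{descendlemma}. The mechanism there is Lemma~\ref{basic}: because $p:X^*\to E[2]^*$ is \emph{normal} and because we already know (from the group $\Gaffzwei$ built in Lemma~\ref{actionx}) that $\bar f$ admits \emph{some} lift to $X^*$, every lift of $\bar f$ to $\HH$---in particular the one coming through $Z^*$---descends to $X^*$. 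The same logic, one level up and using the normality of $q$ together with the group $\Gaffzweihat$ from Lemma~\ref{actiony}, gives the descent to $Y^*$. Your proposal never invokes normality or the prebuilt lifts $\Gaffzwei$, $\Gaffzweihat$, and without them the descent is a genuine gap.

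You have also misplaced the role of condition C). In the paper, the descent to $Y$ is secured by A) and B) alone (these put $\tilde f$ in $\HHH_2$, and Lemma~\ref{descendlemma}~ii) does the rest). Condition C) is used only \emph{after} $\hat f\in\aff(Y)$ exists, to force $\hat f(\A^j_1)=\A^j_1$. Your claim that preserving the single fibre $r^{-1}(\A^j_1)$ ``simultaneously secures the existence of a descent $\hat f\in\aff(Y)$'' is not correct: preserving one fibre of $r$ is neither necessary nor sufficient for $f$ to descend along $r$, and it says nothing about whether $\tilde f_*$ preserves $\pi_1(Y^*)\subset\pi_1(X^*)$, which is the criterion you yourself identify.
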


It will be a main point of the proof of Proposition~\ref{mainprop} 
that all affine homeomorphisms of $Z$ descend to $Y$. This
is enforced by the ramification behaviour of 
the maps $r$, $q$ and $p$. 
A central step for showing this will be to
work with the universal 
covering $\univZstar$ of the punctured
surface $Z^*$ (defined below), look at affine
homeomorphisms there
and find criterions
ensuring that they descend to the finite surface $Y^*$. We do this
in a sequence of lemmata which prepare the proof of 
Proposition~\ref{mainprop}.

Consider in the following the sequence:
\begin{equation}\label{covs}
   \univZstar 
   \stackrel{u}{\to} Z^* 
   \stackrel{r}{\to} Y^*
   \stackrel{q}{\to} X^*
   \stackrel{p}{\to} E[2]^*
   \stackrel{\cdot [2]}{\to} E^*
\end{equation}
Here $E^* = E\backslash\{\infty\}$ ($E$ and $\infty$ 
defined as in the beginning of Section~\ref{s.EMcRSch} and $X^*$, $Y^*$ and $Z^*$  are $X$,
$Y$ and $Z$, respectively, with the preimages of $\infty$ under the corresponding
covering map removed. Hence $r$, $q$ and $p$ in the sequence in (\ref{covs})
are unramified coverings. Let furthermore $u: \univZstar \to Z^*$
be a universal covering. We lift the translation structure
on $Z^*$ to $\univZstar$ and call it $\tilde{\mu}$. Since all
translation structures that we consider were obtained as lifts from 
the structure $\mu$ on $E$, all coverings in (\ref{covs}) are translation coverings. 
Furthermore, by the
uniformisation theorem $(\univZstar, \tilde{\mu})$
is as Riemann surface biholomorphic to the Poincar\'e upper half plane $\HH$.
The group of deck transformations
\[\begin{array}{l}
  \deck(\univZstar/Z^*) \subseteq \deck(\univZstar/Y^*) \subseteq 
   \deck(\univZstar/X^*)\\[2mm]
   \quad \subseteq \deck(\univZstar/E[2]^*) \subseteq
   \deck(\univZstar/E^*) \cong \pi_1(E^*) 
  \end{array}
\]
of the coverings $u$, $r \circ u$, $q \circ r \circ u$, 
$p \circ q \circ r \circ u$ and $(\cdot [2])\circ p \circ q \circ r \circ u$
act as Fuchsian groups on $\HH$. They have all finite
index in 
$\deck(\univZstar/E^*)$, since they arise from finite coverings to $E^*$.
Thus the set of cusps (i.e. fixed points on the boundary of $\HH$
of parabolic elements) of these five groups coincide. We 
denote it by $\cusps$.
The covering $u$ may be extended to a continuous 
map from $\overline{\HH} = \HH \cup \cusps$ (endowed with the horocycle 
topology) and we obtain the chain of continuous maps:
\begin{equation*}
   \overline{\HH} 
   \stackrel{u}{\to} Z 
   \stackrel{r}{\to} Y
   \stackrel{q}{\to} X
   \stackrel{p}{\to} E[2]
   \stackrel{\cdot [2]}{\to} E
\end{equation*}

\begin{definition}
Let $\mathcal{S}$ be a set, let $\mathcal{B} = \{b_1, \ldots, b_l\}$
be a partition of $\mathcal{S}$ and let $f$ act by a permutation on $\mathcal{S}$. We say
that {\em $f$ fixes the partition}, if 
$\forall i \in \{1, \ldots, l \}:\; f(b_i) = b_i$.
\end{definition}

For the following lemma recall e.g. from \cite[Prop. 2.1, Prop. 2.6]{Sc2} that any 
affine homeomorphism
of $\univZstar$ descends to $E^*$ via $u \circ r \circ q \circ p \circ \cdot[2]$,
as well as to $E[2]^*$ via $u \circ r \circ q \circ p$.
In particular any affine homeomorphism of $\HH$ 
can thus be continuously extended to $\HH \cup \cusps$. 
 
\begin{lemma}\label{stababcd}
In the above situation consider the partition of the set of cusps 
$\cusps$ induced by the four points $\A$, $\B$, $\Cc$ and $\D$ on $E[2]$,
i.e. the partition that consists of the four classes 
\[
  \begin{array}{l}
  \cusps_{\A}  = (p\circ q\circ r \circ u)^{-1}(\A),\;\;
  \cusps_{\B} = (p\circ q\circ r \circ u)^{-1}(\B), \\
  \cusps_{\Cc} = (p\circ q\circ r \circ u)^{-1}(\Cc) \mbox{ \em  and }
  \cusps_{\D} = (p\circ q\circ r \circ u)^{-1}(\D).
  \end{array}
\]
Let $\mathcal{H} = \{ f \in \emaff(\univZstar, \tilde{\mu})| 
       \mbox{$f$ fixes the partition }
       \cusps_{\A} \sqcup \cusps_{\B} \sqcup \cusps_{\Cc} \sqcup \cusps_{\D}\}$.
Observe that here (and in the following) we use 
the extension of $f$ to  $\overline{\HH}$. In this sense $f$,
acts on $\cusps$.
We then have: 
\[ 
\begin{array}{lcl}
  h \in \mathcal{H}
  &\Leftrightarrow& \mbox{ the descend of $h$ to $E[2]$ fixes the four points $\A$, $\B$, $\Cc$ and $\D$}\\
  &\Leftrightarrow& D(h) \in \Gamma_2
\end{array}
\]
\end{lemma}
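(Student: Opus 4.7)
The plan is to prove the two stated equivalences separately. Writing $\pi=p\circ q\circ r\circ u:\univZstar\to E[2]$ for the total composition, the cited descent result furnishes, for any $h\in\aff(\univZstar,\tilde\mu)$, a unique affine homeomorphism $\bar h\in\aff(E[2])$ satisfying $\pi\circ h=\bar h\circ\pi$. Extending $\pi$ continuously to the cusps identifies $\cusps_X=\pi^{-1}(X)\cap\cusps$ for each $X\in\{\A,\B,\Cc,\D\}$, which reduces the defining property of $\mathcal{H}$ to a statement about the action of $\bar h$ on those four points.

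For the first equivalence, the relation $\pi\circ h=\bar h\circ\pi$ immediately yields $h(\cusps_X)\subseteq\cusps_{\bar h(X)}$ for each $X$. Since the four classes partition $\cusps$ via $\pi$, it follows that $h$ preserves the partition if and only if $\bar h(X)=X$ for every $X\in\{\A,\B,\Cc,\D\}$.

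For the second equivalence, write $\bar h(z)=A\,z+b$ with $A=D(h)\in\slzwei(\ZZ)$ and $b\in\CC/(2\ZZ)^2$, using the identifications $\A=(0,0)$, $\B=(1,0)$, $\D=(0,1)$, $\Cc=(1,1)$ modulo $(2\ZZ)^2$. The condition $\bar h(\A)=\A$ forces $b=0$ in $\CC/(2\ZZ)^2$; then $\bar h(\B)=\B$ and $\bar h(\D)=\D$ force the two columns of $A$ to be congruent to $(1,0)^T$ and $(0,1)^T$ modulo $2$, which is precisely the condition $A\in\Gamma_2$, after which $\bar h(\Cc)=\Cc$ follows automatically. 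For the converse direction, one uses that $h$ descends further through $\cdot 2:E[2]\to E$ to $\bar{\bar h}\in\aff(E^*)$, which must fix the puncture $\infty$ and so is linear with zero translation part; this pins down the particular lift $\bar h$ to have $b=0$ as well, and the above computation then shows that $\bar h$ fixes each of the four $2$-torsion points as soon as $D(h)\in\Gamma_2$.

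The main obstacle is the translation-part argument in the converse of the second equivalence, namely that $D(h)\in\Gamma_2$ really forces $b=0$ in the descent $\bar h$. The subtlety is that an element of $\aff(E^*)\cong\slzwei(\ZZ)$ a priori admits four lifts to $\aff(E[2]^*)$, differing by the deck group $(\ZZ/2\ZZ)^2$ of $\cdot 2:E[2]^*\to E^*$, and one must verify that the particular lift determined by the ambient affine homeomorphism of $\univZstar$ is the canonical one with trivial translation. This is handled by tracking how $h$ moves a chosen basepoint of $\univZstar$ sitting above $\infty\in E$, together with the fact that every cusp of $\univZstar$ projects to $\infty$, so that the canonical lift is singled out by the position of any one cusp relative to its descent through $\pi$.
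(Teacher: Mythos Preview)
Your treatment of the first equivalence and of the forward direction of the second equivalence is correct and essentially matches the paper's brief argument.

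The converse of the second equivalence, however, contains a gap that you rightly flag as the ``main obstacle'' but do not actually resolve---and in fact this converse direction is false as stated. Let $\tau$ be translation by $(1,0)$ on $E[2]^*$ and let $\tilde\tau\in\aff(\univZstar,\tilde\mu)$ be any lift of $\tau$; such a lift exists because $\univZstar$ is also the universal cover of $E[2]^*$, and it is affine because $\tau$ is. Then $D(\tilde\tau)=I\in\Gamma_2$, yet the descent of $\tilde\tau$ to $E[2]$ is $\tau$, which swaps $\A\leftrightarrow\B$ and $\Cc\leftrightarrow\D$ and hence does not fix the four points. Your proposed fix---tracking a basepoint of $\univZstar$ over $\infty$---cannot work: $\infty$ is a puncture, only cusps lie over it, and nothing about a single cusp distinguishes the lift with $b=0$ from the other three lifts differing by $\deck(E[2]^*/E^*)\cong(\ZZ/2\ZZ)^2$.

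The paper's one-line proof glosses over the same point: it asserts that the action of the descent on $\{\A,\B,\Cc,\D\}$ coincides with the linear action of $D(h)\bmod 2$ on $(\ZZ/2\ZZ)^2$, which tacitly assumes the translation part of the descent vanishes. This does no harm to the paper, since only the implication $h\in\mathcal{H}\Rightarrow D(h)\in\Gamma_2$ is used afterwards, and that direction is unaffected.
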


\begin{proof}
The first equivalence directly follows from the definition
of the partition. For the second equivalence observe that 
the action of an affine homeomorphism $f$ of $E[2]$ on 
$\A$, $\B$, $\Cc$ and $\D$ is equivalent to the action of 
the image of $D(f)$ in $\slzwei(\ZZ/2\ZZ)$ on 
$0\choose 0$, $1\choose 0$, $0\choose 1$ and $1\choose 1$
in $\ZZ/2\ZZ$.
\end{proof}

The following simple argument will be crucial later,
therefore we provide a proof of it.

\begin{lemma}\label{basic}
Let $p:A^* \to B^*$ be an unramified normal covering 
and let $u: \widetilde{A^*} \to A^*$ be
a universal covering. Let $f$ be a homeomorphism of $B^*$ and $\tilde{f}$
a lift of $f$ to $\widetilde{A^*}$ via $p \circ u$. Suppose that 
$f$ can be lifted to $A^*$. Then $\tilde{f}$ descends to 
$A^*$.
\end{lemma}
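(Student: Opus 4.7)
The plan is to use the standard fact that a self-homeomorphism of $\widetilde{A^*}$ descends through $u$ precisely when it normalises the deck group $G_A = \deck(\widetilde{A^*}/A^*)$, together with the normality of $p$ which guarantees $G_A \triangleleft G_B$ where $G_B = \deck(\widetilde{A^*}/B^*)$.

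First I would pick any lift $\hat{f}: A^* \to A^*$ of $f$ (which exists by hypothesis) and lift it further through the universal cover to a homeomorphism $\widetilde{\hat{f}}: \widetilde{A^*} \to \widetilde{A^*}$ with $u \circ \widetilde{\hat{f}} = \hat{f} \circ u$. Composing with $p$ on the left gives $(p \circ u) \circ \widetilde{\hat{f}} = f \circ (p \circ u)$, so $\widetilde{\hat{f}}$ is also a lift of $f$ via $p \circ u$.

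Next, since both $\tilde{f}$ and $\widetilde{\hat{f}}$ are lifts of the same map $f$ through the covering $p \circ u$, they differ by a deck transformation of $p \circ u$, that is, $\tilde{f} = \gamma \circ \widetilde{\hat{f}}$ for some $\gamma \in G_B$. By construction $\widetilde{\hat{f}}$ descends through $u$, hence it normalises $G_A$, i.e.\ $\widetilde{\hat{f}} \, G_A \, \widetilde{\hat{f}}^{\,-1} = G_A$. Because $p$ is assumed to be a \emph{normal} covering, $G_A$ is a normal subgroup of $G_B$, so conjugation by $\gamma \in G_B$ preserves $G_A$ as well. Combining these two facts, $\tilde{f}\, G_A\, \tilde{f}^{\,-1} = \gamma\,\widetilde{\hat{f}}\, G_A\, \widetilde{\hat{f}}^{\,-1}\,\gamma^{-1} = \gamma\, G_A\,\gamma^{-1} = G_A$, so $\tilde{f}$ normalises $G_A$ and therefore descends to a well-defined homeomorphism of $A^*$.

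The argument is essentially a one-line covering-space manipulation, so there is no serious obstacle; the only point that requires care is invoking the normality of $p$ at exactly the right spot (to get $\gamma\, G_A\,\gamma^{-1} = G_A$), which is the ingredient that makes the hypothesis "$f$ lifts to $A^*$" propagate from one particular lift to all of them.
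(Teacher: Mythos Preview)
Your proof is correct and is essentially the same argument as the paper's: both take a lift $\hat f$ of $f$ to $A^*$, push it up to the universal cover, compare it with $\tilde f$ via an element of $\deck(\widetilde{A^*}/B^*)$, and then invoke normality of $\deck(\widetilde{A^*}/A^*)$ in $\deck(\widetilde{A^*}/B^*)$ to conclude that $\tilde f$ descends. The only difference is cosmetic: you spell out the normaliser criterion for descent explicitly, whereas the paper phrases the final step as ``$\tilde f'\circ\tilde f^{-1}$ descends to $A^*$, hence so does $\tilde f$.''
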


\begin{proof}
This follows directly from the fact
that $p$ is normal, since if there is a lift $\hat{f}$ of $f$
to $A^*$, then we may lift $\hat{f}$ to some $\tilde{f}'$
on $\widetilde{A^*}$. Then $\tilde{f}' \circ \tilde{f}^{-1}$
is in $\deck(\widetilde{A^*}/B^*)$. But $\deck(\widetilde{A^*}/A^*)$
is normal in this group, thus $\tilde{f}' \circ \tilde{f}^{-1}$
descends to $A^*$ and thus also  $\tilde{f}$ does.
\end{proof}

\begin{lemma}\label{descendlemma}
  Suppose we are in the situation of Lemma~\ref{stababcd} and 
  let $X^*$ and $Y^*$ be the punctured surfaces from above. 
  \begin{enumerate}
  \item[i)]
    All elements of $\HHH$ descend to $X^*$.
  \item[ii)]
    Define $\cusps_{\A_i} := (q \circ r \circ u)^{-1}(\A_i)$, 
    $\cusps_{\B_i} := (q \circ r \circ u)^{-1}(\B_i)$ and
    $$
       \HHH_2 = \{f \in \HHH| 
       f(\cusps_{\A_1}) =\cusps_{\A_1} \mbox{ and } f(\cusps_{\B_1}) =\cusps_{\B_1} \}.
    $$
    Then all elements of \,$\HHH_2$ descend to $Y^*$.
  \end{enumerate}
\end{lemma}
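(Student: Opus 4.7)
The plan is to apply Lemma~\ref{basic} twice---first to the normal unramified covering $p: X^* \to E[2]^*$ (Proposition~\ref{propx}) for part i), and then to $q: Y^* \to X^*$ (Lemma~\ref{origamiy} i)) for part ii). In each case I would reduce the problem to showing that the descent of $f$ at one level actually admits a lift to the next level up the tower; once that is established, Lemma~\ref{basic} delivers the descent of $f$ through the universal cover.

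For part i), start with the automatic descent $\bar f$ of $f$ to $E[2]^*$ (noted in the paragraph preceding Lemma~\ref{stababcd}). By Lemma~\ref{stababcd}, $D(\bar f)\in\Gammazwei$ and $\bar f$ fixes all four of $\A,\B,\Cc,\D$, so $\bar f \in \aff_1(E[2])$. The derivative map $D:\Gaffzwei\to\Gammazwei$ is an isomorphism (Lemma~\ref{actionx} iii)), giving a unique $\hat h\in\Gaffzwei$ with $D(\hat h)=D(\bar f)$. Since $\hat h$ fixes each $\A_i,\B_i,\D_i$ pointwise, its descent to $E[2]$ is again an element of $\aff_1(E[2])$ with the same derivative, hence coincides with $\bar f$. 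Thus $\bar f$ lifts to $X$, and Lemma~\ref{basic} produces the descent of $f$ to $X^*$.

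For part ii), let $\hat f$ be the descent of $f$ to $X^*$ from part i); the $\HHH_2$ conditions give $\hat f(\A_1)=\A_1$ and $\hat f(\B_1)=\B_1$. I would first show $\hat f\in\Gaffzwei$: choose $\hat h\in\Gaffzwei$ with $D(\hat h)=D(\hat f)$, note that $\tau=\hat f\circ\hat h^{-1}$ is a translation fixing $\A_1$ and $\B_1$, and conclude $\tau=\id$ from the uniqueness of the length-one horizontal saddle connection between these two points (the lower edge of $(A_1,1)$). Next, using Lemma~\ref{actiony} ii) (which gives $D:\Gaffzweihat\to\Gammazwei$ an isomorphism), pick $\tilde f\in\Gaffzweihat$ with $D(\tilde f)=D(\hat f)$; its descent to $X$ lies in $\Gaffzwei$ (it fixes $\widehat{\mathcal{S}_{\mbox{\fs sing}}}$ pointwise) and shares the derivative with $\hat f$, hence equals $\hat f$. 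So $\hat f$ lifts to $Y$, and a second application of Lemma~\ref{basic} yields the descent of $f$ to $Y^*$.

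The main obstacle is not computational but rather careful bookkeeping across the four-level tower: at each stage one must check that the relevant descent lands in the intended subgroup and identify the appropriate lift using the explicit derivative-map isomorphisms $D:\Gaffzwei\to\Gammazwei$ and $D:\Gaffzweihat\to\Gammazwei$. The role of the extra $\HHH_2$ hypothesis is precisely to rule out a non-trivial permutation of the $\A_i$ and $\B_i$ by the descent to $X$, reducing that descent to an element of $\Gaffzwei$ via the translation-rigidity argument above.
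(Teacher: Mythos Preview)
Your proposal is correct and follows essentially the same strategy as the paper: apply Lemma~\ref{basic} to the normal coverings $p$ and $q$ by showing that the descent at each level coincides with the image of an element of $\Gaffzwei$ (resp.\ $\Gaffzweihat$) having the same derivative. The only cosmetic difference is that in part~ii) you first verify $\hat f\in\Gaffzwei$ and then identify via injectivity of $D$ on $\Gaffzwei$, whereas the paper compares the descent of the candidate lift with $\bar f$ directly via the same translation-rigidity argument; the two are equivalent.
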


\begin{proof}
{\bf i)}
Let $\tilde{f}$ be in $\HHH$.
By Lemma~\ref{stababcd} $\tilde{f}$
descends to some  affine homeomorphism $\bar{f}$ on 
$E[2]$ which fixes $\A$, $\B$, $\Cc$ and
$\D$ pointwise and whose derivative $A = D(f)$
is in $\Gamma_2$. By Lemma~\ref{actionx}
we have an affine homeomorphism of $X^*$ with derivative $A$
which fixes the points $\A_i$, $\B_i$, $\D_i$ and permutes
the $\Cc_i$. Thus its descend $\bar{f}'$ to $E[2]$ 
fixes the points $\A$, $\B$, $\Cc$, $\D$ and also has derivative $A$.
This determines it uniquely, thus $\bar{f} = \bar{f}'$.
By Lemma~\ref{basic} it follows that $\tilde{f}$ descends to $X^*$.

{\bf ii)} Let now $\tilde{f}$ be in $\HHH_2$.
We may use similar arguments as in {\bf i)} replacing
$E[2]$ and $X^*$ by $X^*$ and $Y^*$ (resp.): by {\bf i)} we have
that $\tilde{f}$ descends to some $\bar{f}$ on $X^*$. By assumption it fixes the
point $\A_1$ and $\B_1$. By Lemma~\ref{actiony} there
is an affine homeomorphism of $Y^*$ 
with same derivative, which fixes $\A^1_1$ and $\B^1_1$ and
descends to some $\bar{f}'$  on $X^*$
which fixes $\A_1$ and $\B_1$. Thus $\bar{f} \circ \bar{f}^{-1}$
is a translation of $X^*$ which fixes $\A_1$ and $\B_1$. In the proof
of Lemma~\ref{actionx} we have seen that this implies that
$\bar{f} \circ \bar{f}^{-1}$ is the identity and hence $\bar{f} = \bar{f}'$.
We may again conclude using Lemma~\ref{basic} and the fact
that the covering $q$ is normal.
\end{proof}

\begin{proof}[Proof of Proposition~\ref{mainprop}]
Let $\fhathat$ be in $\aff(Z)$. Again we use that
since the derived vectors
of the saddle connections of $Z$ span the lattice $\ZZ^2$, 
$\fhathat$ preserves the vertices of the squares
that form $Z$. In particular $\fhathat$ restricts
to an affine homeomorphism of $Z^*$.
 Let $\tilde{f}$ be a lift
of it to the universal covering $\HH$. It follows from A) that
$\tilde{f} \in \HHH$ and thus in particular
its derivative is in $\Gamma_2$. It furthermore follows
from  A) and B) that $\tilde{f}$ is even in $\HHH_2$.
Thus by Lemma~\ref{descendlemma} $\tilde{f}$ descends to
$\hat{f}$ on $Y^*$. From C)
we obtain that $\hat{f}$ fixes $\A^j_1$. Furthermore we obtain from
A) and B) that $\B^j_1$ is mapped to $\B^{j'}_{1}$ for some $j' \in \{1, \ldots, 2k\}$. 
It follows from Corollary~\ref{cor-util} that $\hat{f}$ is in $\Gaffzweihat$. 
From D) we obtain that $\hat{f}$ also fixes $\Cc^1_1$.
Recall from Lemma~\ref{actiony} that 
$\hat{f}(\Cc_{G_6(2k)\cdot g}) = \Cc_{G_6(2k)\cdot g\varphi(f)^{-1}}$
with $\varphi = \gamma\circ D$.
Since we have $\Cc^1_1 = \Cc_{\mbox {\fs id}}$, this implies 
$G_6(2k) = G_6(2k)\cdot \varphi(f)^{-1} $ and thus
$\varphi(f) = \gamma(D(f))$ is in $G_6(2k) = \gamma(\PGammasechs(2k))$.
Hence we have $D(f)$ is in $\PGammasechs(2k)$.
\end{proof}

\section{Proof of Theorem~\ref{t.A}}\label{proofoftheorem}

We finally complete the 
proof of Theorem~\ref{t.A} by giving a specific family $Z_k$ of examples
which satisfies 
the conditions in Proposition~\ref{mainprop}.  
For a given $k$, the origami $Z = Z_k$ is a covering of degree 2 of the origami $Y$ 
from Section~\ref{step2} and constructed as follows:
We take two copies of $Y$ and slit them along the following three edges: 
the upper edge of Square $(A_2,3,2k)$ and  of $(A_5,1,1)$
and the right edge of $(A_1,3,1)$ (see Figure~\ref{zexample}). 
Each slit is reglued
with the corresponding slit of the other copy. In the following
definition we give a formal description of this origami
by its permutations. Recall for this that the $96k$ squares of the origami $Y$ 
correspond to the elements
in the set $ M_Y = \PGammazwei/\PGammasechs \times \{1,2,3,4\} \times \ZZ/(2k\ZZ)$.

\begin{definition}\label{origamiz}

Let $\sigma^Y_a$ and $\sigma^Y_b$ in $S_{M_Y}$ be the permutations which define
the horizontal and the vertical gluings of $Y$. Furthermore, let 
$h: M_Y \to \ZZ/2\ZZ$ and $v: M_Y \to \ZZ/2\ZZ$ be the two maps defined by 
\[
  \begin{array}{lcl}
    h((A,i,j)) = 1 &\Leftrightarrow& (A,i,j) = (A_1,3,1),\\
    v((A,i,j)) = 1 &\Leftrightarrow& (A,i,j) \in \{(A_2,3,2k), (A_5,1,1) \}.
  \end{array}
\]
Now, define $Z = Z_k$ to be the origami
which consists of $192k$ squares labelled by the elements of 
$M_Z = \PGammazwei/\PGammasechs \times \{1,2,3,4\} \times \ZZ/(2k\ZZ)\times \ZZ/(2\ZZ)$
with the following gluing rules: 
\[
   \begin{array}{l}
     \sigma^Z_a: (A,i,j,l) \mapsto (\sigma^Y_a(A,i,j),l + h(A,i,j)) \mbox{ defines the horizontal gluings}\\ 
     \sigma^Z_b: (A,i,j,l) \mapsto (\sigma^Y_b(A,i,j),l + v(A,i,j)) \mbox{ defines  the vertical gluings.}
   \end{array}
\]
\end{definition}

\begin{proposition}
The origami $Z$ from Definition~\ref{origamiz} has the following properties:
\begin{itemize}
\item[i)] 
  It allows a degree 2 covering $r$ to the origami $Y$.
\item[ii)] 
  The covering $r$ defined in i) is ramified precisely
  over the four points $\Cc^1_1$, $\A^{2k}_1$, $\B^1_1$ and $\B^{2k}_1$. The genus
of $Z$ is $48k+3$. 
\item[iii)]
  Its Veech group is contained in $\thegroup$. 
\end{itemize}
\end{proposition}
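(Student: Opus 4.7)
For (i), the map $\pi\colon M_Z \to M_Y$, $(A,i,j,l)\mapsto (A,i,j)$, is $2$-to-$1$, and by construction one has $\pi \circ \sigma^Z_a = \sigma^Y_a \circ \pi$ and $\pi\circ \sigma^Z_b = \sigma^Y_b\circ \pi$. Hence $\pi$ descends to a translation covering $r\colon Z \to Y$ of degree $2$.

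For (ii), the covering $r$ is the connected regular $\ZZ/2\ZZ$-cover associated to the cocycle $(h,v)$, whose support consists of the three slits (upper edges of $(A_2,3,2k)$ and $(A_5,1,1)$, right edge of $(A_1,3,1)$). The monodromy of $r$ around a vertex $P$ of $Y$ is therefore the parity of the number of slit-ends incident to $P$. I would inspect Figures~\ref{yhorizontal} and~\ref{yvertical} at the three slit edges and track their six endpoints under the gluings $\sigma^Y_a,\sigma^Y_b$, expecting exactly four vertices of odd incidence, namely $\Cc^1_1$, $\A^{2k}_1$, $\B^1_1$ and $\B^{2k}_1$, the remaining two slit-ends coinciding at a single further vertex of (even) incidence~$2$. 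Each ramification being of index $2$, Riemann--Hurwitz combined with $g_Y=24k+1$ (Lemma~\ref{origamiy}(iv)) yields $2g_Z-2 = 2\cdot 48k + 4$, i.e.\ $g_Z = 48k+3$.

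For (iii), I apply Proposition~\ref{mainprop} and verify its four conditions. Using that $p$ has local degree $3$ at each $\A_i,\B_i,\D_i$ and $1$ at each $\Cc_i$, that $q$ is unramified (Lemma~\ref{origamiy}(iii)), and the ramification of $r$ from (ii), the ramification data under $p\circ q\circ r$ are: above $\A$ one preimage of index $6$ together with many of index $3$; above $\B$ two preimages of index $6$ together with many of index $3$; above $\Cc$ one preimage of index $2$ together with many of index $1$; above $\D$ only preimages of index $3$. These four multisets are pairwise distinct, giving A). For B), the only $r$-ramified vertices above $\A_i$ (resp.\ $\B_i$) lie over $\A_1$ (resp.\ $\B_1$), so $\rmd(\A_1,q\circ r)$ and $\rmd(\B_1,q\circ r)$ each contain an index-$2$ entry while $\rmd(\A_i,q\circ r)$ and $\rmd(\B_i,q\circ r)$ consist only of index-$1$ entries for $i\in\{2,3,4\}$. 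Condition C) is immediate with $j=2k$ since $\A^{2k}_1$ is the unique $r$-ramified point among $\{\A^{j'}_1\}_{j'}$, and D) is immediate because $\Cc^1_1$ is the unique $r$-ramified point in the fibre $(p\circ q)^{-1}(\Cc)$. Proposition~\ref{mainprop} then gives $\Gamma(Z)\subseteq \pm\Gamma_6(2k)$.

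The principal obstacle is the bookkeeping in (ii): one must trace the six endpoints of the three prescribed slits through the horizontal and vertical gluings of $Y$ in Figures~\ref{yhorizontal}--\ref{yvertical}, identify them with the named vertices $\A^j_i,\B^j_i,\Cc^j_i$, and in particular confirm that exactly one pair of endpoints coincides at a single vertex (producing even incidence there) so that the set of ramification points is precisely the four claimed. Once this combinatorial verification is in hand, the rest of the proof is a direct application of Riemann--Hurwitz and of Proposition~\ref{mainprop}.
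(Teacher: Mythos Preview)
Your proposal is correct and follows essentially the same route as the paper: the same projection map for (i), the same parity-of-slit-crossings argument plus Riemann--Hurwitz for (ii), and the same verification of conditions A)--D) of Proposition~\ref{mainprop} for (iii), with the ramification multisets computed exactly as you describe. One small caveat: your expectation that the two leftover slit-ends coincide at a \emph{fifth} vertex of incidence~$2$ is not the only combinatorial possibility consistent with four odd-incidence vertices (one of the four named vertices could instead have incidence~$3$); the paper simply reads the answer off Figure~\ref{zexample} without spelling out which scenario occurs, and either way the argument goes through unchanged.
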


\begin{proof}
The map $(A,i,j,l) \mapsto (A,i,j)$ defines a covering
of degree 2. Hence we have {\bf i)}. 
For {\bf ii)}, you may further read off from Figure~\ref{zexample}
that a small loop on $Y$ around a vertex meets an odd number of cuts
if and only if the vertex is $\A^{2k}_1$, $\B^{2k}_1$, $\Cc^1_1$ or $\B^1_1$.
Thus these are the ramification points.
Applying the Riemann-Hurwitz formula gives the genus, since by 
Proposition~\ref{origamiy} the genus of $Y$ is $24k+1$.\\
Recall from Proposition~\ref{propx} that the ramification index of $p: X \to E[2]$ 
at $\A_i$, $\B_i$, $\D_i$ ($i \in \{1,2,3,4\}$) is 3, respectively,
and $p$ is unramified at all $\Cc_j$ ($j \in \{1, \ldots, 12\}$). Furthermore 
$q:Y \to X$ is unramified by 
Lemma~\ref{origamiy}. Thus we obtain from ii) for the ramification data:
\[ 
   \begin{array}{l}
     \rmd(\A,p\circ q \circ r) = \{6, \underbrace {3, \ldots, 3}_{2\cdot(8k-1)}\},\;\;
     \rmd(\B,p\circ q \circ r) = \{6,6,\underbrace {3, \ldots, 3}_{2\cdot(8k-2)}\}\\[8mm]
     \rmd(\Cc,p\circ q \circ r) = \{2, \underbrace {1, \ldots, 1}_{2\cdot(24k-1)}\},\;\;
     \rmd(\D,p\circ q \circ r) = \{\underbrace {3, \ldots, 3}_{16\cdot k}\}
   \end{array}
\]
In particular, these four ramification data are different and 
Condition A) in Proposition~\ref{mainprop} is satisfied.
Furthermore, we have for $i \neq 1$: 
\[
\begin{array}{lcl}
  \rmd(\A_1,q\circ r) = \{2, \underbrace {1,\ldots, 1}_{2(2k-1)}\}
  &\neq& \rmd(\A_i,q\circ r) = \{\underbrace {1,\ldots, 1}_{4k}\} \mbox{ and }\\
  \rmd(\B_1,q\circ r) = \{2,2, \underbrace {1,\ldots, 1}_{2(2k-2)}\}
  &\neq& \rmd(\B_i,q\circ r) = \{\underbrace {1,\ldots, 1}_{4k}\}.
\end{array}
\]
Thus Condition B) holds. Finally, the ramification behaviour of $r$
shown in ii) ensures that Condition C) and D) are satisfied.
Thus the claim follows from Proposition~\ref{mainprop}.
\end{proof}

In particular, this proposition combined with Proposition~\ref{p.A} says that the Teichm\"uller curve of the origami $Z_k$ has complementary series for every $k\geq 3$. In particular, our ``smallest'' example of Teichm\"uller curve with complementary series corresponds to $Z_3$. The following corollary gives a description of $Z_3$ in terms of pairs of permutations:

\begin{corollary}\label{c.origamiZ3}
The Teichm\"uller curve of the following origami
(see Figure~\ref{picture} in Appendix B)  allows a 
complementary series:
\begin{longtable}{lcl}
  $\sigma_a$ &=& 
  $(1, 13, 193, 207, 243, 253)
  (2, 14, 194, 208, 244, 254)$\\
  &&
  $(3, 15, 195, 209, 245, 255)
  (4, 16, 196, 210, 246, 256)$\\
  &&
  $(5, 17, 197, 211, 247, 257)
  (6, 18, 198, 212, 248, 258)$\\
  &&
  $(7, 19, 199, 213, 249, 259)
  (8, 20, 200, 214, 250, 260)$\\
  &&
  $(9, 21, 201, 215, 251, 261)
  (10, 22, 202, 216, 252, 262)$\\
  &&
  $(11, 23, 203, 205, 241, 263)
  (12, 24, 204, 206, 242, 264)$\\
  &&
  $(25, 38, 266, 280, 220, 230, 26, 37, 265, 279, 219, 229)$\\
  &&
  $(27, 39, 267, 281, 221, 231)
  (28, 40, 268, 282, 222, 232)$\\
  &&
  $(29, 41, 269, 283, 223, 233)
  (30, 42, 270, 284, 224, 234)$\\
  &&
  $(31, 43, 271, 285, 225, 235)
  (32, 44, 272, 286, 226, 236)$\\
  &&
  $(33, 45, 273, 287, 227, 237)
  (34, 46, 274, 288, 228, 238)$\\ 
  &&
  $(35, 47, 275, 277, 217, 239)
  (36, 48, 276, 278, 218, 240)$\\
  &&
  $(49, 61, 433, 445, 337, 349)
  (50, 62, 434, 446, 338, 350)$\\
  &&
  $(51, 63, 435, 447, 339, 351)
  (52, 64, 436, 448, 340, 352)$\\
  &&
  $(53, 65, 437, 449, 341, 353)
  (54, 66, 438, 450, 342, 354)$\\
  &&
  $(55, 67, 439, 451, 343, 355)
  (56, 68, 440, 452, 344, 356)$\\
  &&
  $(57, 69, 441, 453, 345, 357)
  (58, 70, 442, 454, 346, 358)$\\
  &&
  $(59, 71, 443, 455, 347, 359)
  (60, 72, 444, 456, 348, 360)$\\
  &&
  $(73, 85, 361, 373, 457, 469)
  (74, 86, 362, 374, 458, 470)$\\
  &&
  $(75, 87, 363, 375, 459, 471)
  (76, 88, 364, 376, 460, 472)$\\
  &&
  $(77, 89, 365, 377, 461, 473)
  (78, 90, 366, 378, 462, 474)$\\
  &&
  $(79, 91, 367, 379, 463, 475)
  (80, 92, 368, 380, 464, 476)$\\
  &&
  $(81, 93, 369, 381, 465, 477)
  (82, 94, 370, 382, 466, 478)$\\
  &&
  $(83, 95, 371, 383, 467, 479)
  (84, 96, 372, 384, 468, 480)$\\
  &&
  $(97, 109, 385, 397, 481, 493)
  (98, 110, 386, 398, 482, 494)$\\
  &&
  $(99, 111, 387, 399, 483, 495)
  (100, 112, 388, 400, 484, 496)$\\
  &&
  $(101, 113, 389, 401, 485, 497)
  (102, 114, 390, 402, 486, 498)$\\
  &&
  $(103, 115, 391, 403, 487, 499)
  (104, 116, 392, 404, 488, 500)$\\
  &&
  $(105, 117, 393, 405, 489, 501)
  (106, 118, 394, 406, 490, 502)$\\
  &&
  $(107, 119, 395, 407, 491, 503)
  (108, 120, 396, 408, 492, 504)$\\
  &&
  $(121, 133, 505, 517, 409, 421)
  (122, 134, 506, 518, 410, 422)$\\
  &&
  $(123, 135, 507, 519, 411, 423)
  (124, 136, 508, 520, 412, 424)$\\
  &&
  $(125, 137, 509, 521, 413, 425)
  (126, 138, 510, 522, 414, 426)$\\
  &&
  $(127, 139, 511, 523, 415, 427)
  (128, 140, 512, 524, 416, 428)$\\
  &&
  $(129, 141, 513, 525, 417, 429)
  (130, 142, 514, 526, 418, 430)$\\
  &&
  $(131, 143, 515, 527, 419, 431)
  (132, 144, 516, 528, 420, 432)$\\
  &&
  $(145, 167, 539, 551, 299, 301)
  (146, 168, 540, 552, 300, 302)$\\
  &&
  $(147, 157, 529, 541, 289, 303)
  (148, 158, 530, 542, 290, 304)$\\
  &&
  $(149, 159, 531, 543, 291, 305)
  (150, 160, 532, 544, 292, 306)$\\
  &&
  $(151, 161, 533, 545, 293, 307)
  (152, 162, 534, 546, 294, 308)$\\
  &&
  $(153, 163, 535, 547, 295, 309)
  (154, 164, 536, 548, 296, 310)$\\
  &&
  $(155, 165, 537, 549, 297, 311)
  (156, 166, 538, 550, 298, 312)$\\
  &&
  $(169, 183, 315, 325, 553, 565)
  (170, 184, 316, 326, 554, 566)$\\
  &&
  $(171, 185, 317, 327, 555, 567)
  (172, 186, 318, 328, 556, 568)$\\
  &&
  $(173, 187, 319, 329, 557, 569)
  (174, 188, 320, 330, 558, 570)$\\
  &&
  $(175, 189, 321, 331, 559, 571)
  (176, 190, 322, 332, 560, 572)$\\
  &&
  $(177, 191, 323, 333, 561, 573)
  (178, 192, 324, 334, 562, 574)$\\
  &&
  $(179, 181, 313, 335, 563, 575)
  (180, 182, 314, 336, 564, 576)$,\\
  $\sigma_b$ &=&
  $(1, 265, 289, 553, 433, 73)
  (2, 266, 290, 554, 434, 74)$\\
  &&
  $(3, 267, 291, 555, 435, 75)
  (4, 268, 292, 556, 436, 76)$\\
  &&
  $(5, 269, 293, 557, 437, 77)
  (6, 270, 294, 558, 438, 78)$\\
  &&
  $(7, 271, 295, 559, 439, 79)
  (8, 272, 296, 560, 440, 80)$\\
  &&
  $(9, 273, 297, 561, 441, 81)
  (10, 274, 298, 562, 442, 82)$\\
  &&
  $(11, 275, 299, 563, 443, 83, 12, 276, 300, 564, 444, 84)$\\
  &&
  $(13, 229, 157, 565, 493, 133)
  (14, 230, 158, 566, 494, 134)$\\
  &&
  $(15, 231, 159, 567, 495, 135)
  (16, 232, 160, 568, 496, 136)$\\
  &&
  $(17, 233, 161, 569, 497, 137)
  (18, 234, 162, 570, 498, 138)$\\
  &&
  $(19, 235, 163, 571, 499, 139)
  (20, 236, 164, 572, 500, 140)$\\
  &&
  $(21, 237, 165, 573, 501, 141)
  (22, 238, 166, 574, 502, 142)$\\
  &&
  $(23, 239, 167, 575, 503, 143)
  (24, 240, 168, 576, 504, 144)$\\
  &&
  $(25, 97, 505, 529, 169, 193, 26, 98, 506, 530, 170, 194)$\\
  &&
  $(27, 99, 507, 531, 171, 195)
  (28, 100, 508, 532, 172, 196)$\\
  &&
  $(29, 101, 509, 533, 173, 197)
  (30, 102, 510, 534, 174, 198)$\\
  &&
  $(31, 103, 511, 535, 175, 199)
  (32, 104, 512, 536, 176, 200)$\\
  &&
  $(33, 105, 513, 537, 177, 201)
  (34, 106, 514, 538, 178, 202)$\\
  &&
  $(35, 107, 515, 539, 179, 203)
  (36, 108, 516, 540, 180, 204)$\\
  &&
  $(37, 61, 469, 541, 325, 253)
  (38, 62, 470, 542, 326, 254)$\\
  &&
  $(39, 63, 471, 543, 327, 255)
  (40, 64, 472, 544, 328, 256)$\\
  &&
  $(41, 65, 473, 545, 329, 257)
  (42, 66, 474, 546, 330, 258)$\\
  &&
  $(43, 67, 475, 547, 331, 259)
  (44, 68, 476, 548, 332, 260)$\\
  &&
  $(45, 69, 477, 549, 333, 261)
  (46, 70, 478, 550, 334, 262)$\\
  &&
  $(47, 71, 479, 551, 335, 263)
  (48, 72, 480, 552, 336, 264)$\\
  &&
  $(49, 361, 145, 313, 385, 121)
  (50, 362, 146, 314, 386, 122)$\\
  &&
  $(51, 363, 147, 315, 387, 123)
  (52, 364, 148, 316, 388, 124)$\\
  &&
  $(53, 365, 149, 317, 389, 125)
  (54, 366, 150, 318, 390, 126)$\\
  &&
  $(55, 367, 151, 319, 391, 127)
  (56, 368, 152, 320, 392, 128)$\\
  &&
  $(57, 369, 153, 321, 393, 129)
  (58, 370, 154, 322, 394, 130)$\\
  &&
  $(59, 371, 155, 323, 395, 131)
  (60, 372, 156, 324, 396, 132)$\\
  &&
  $(85, 109, 421, 301, 181, 349)
  (86, 110, 422, 302, 182, 350)$\\
  &&
  $(87, 111, 423, 303, 183, 351)
  (88, 112, 424, 304, 184, 352)$\\
  &&
  $(89, 113, 425, 305, 185, 353)
  (90, 114, 426, 306, 186, 354)$\\
  &&
  $(91, 115, 427, 307, 187, 355)
  (92, 116, 428, 308, 188, 356)$\\
  &&
  $(93, 117, 429, 309, 189, 357)
  (94, 118, 430, 310, 190, 358)$\\
  &&
  $(95, 119, 431, 311, 191, 359)
  (96, 120, 432, 312, 192, 360)$\\
  &&
  $(205, 277, 397, 517, 445, 373)
  (206, 278, 398, 518, 446, 374)$\\
  &&
  $(207, 279, 399, 519, 447, 375)
  (208, 280, 400, 520, 448, 376)$\\
  &&
  $(209, 281, 401, 521, 449, 377)
  (210, 282, 402, 522, 450, 378)$\\
  &&
  $(211, 283, 403, 523, 451, 379)
  (212, 284, 404, 524, 452, 380)$\\
  &&
  $(213, 285, 405, 525, 453, 381)
  (214, 286, 406, 526, 454, 382)$\\
  &&
  $(215, 287, 407, 527, 455, 383)
  (216, 288, 408, 528, 456, 384)$\\
  &&
  $(217, 337, 457, 481, 409, 241)
  (218, 338, 458, 482, 410, 242)$\\
  &&
  $(219, 339, 459, 483, 411, 243)
  (220, 340, 460, 484, 412, 244)$\\
  &&
  $(221, 341, 461, 485, 413, 245)
  (222, 342, 462, 486, 414, 246)$\\
  &&
  $(223, 343, 463, 487, 415, 247)
  (224, 344, 464, 488, 416, 248)$\\
  &&
  $(225, 345, 465, 489, 417, 249)
  (226, 346, 466, 490, 418, 250)$\\
  &&
  $(227, 347, 467, 491, 419, 251)
  (228, 348, 468, 492, 420, 252)$
\end{longtable}
The genus of the origami is 147.
It consists of 95 horizontal and 94 vertical cylinders and it is in 
the stratum $\mathcal{H}(1,5,5,5,\underbrace{2,\ldots,2}_{138})$.
\end{corollary}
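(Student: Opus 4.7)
The plan is to identify the listed origami with $Z_3$ from Definition~\ref{origamiz} (i.e. the case $k=3$), and then to invoke the machinery already established so that the analytic statement about complementary series reduces to a finite combinatorial check about the permutations $\sigma_a,\sigma_b$.

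First, I would set up an explicit bijection between the $576=192\cdot 3$ squares listed in the corollary and the label set $M_Z = (\PGammazwei/\PGammasechs)\times\{1,2,3,4\}\times\mathbb Z/6\mathbb Z\times\mathbb Z/2\mathbb Z$ used in Definition~\ref{origamiz}. Concretely, I would fix an ordering of the 12 cosets $A_1,\dots,A_{12}$ (as in Remark~\ref{matrices}), of the 4 squares of $E[2]$, of the 6 copies $j\in\{1,\dots,6\}$ and of the 2 sheets $l\in\{0,1\}$, and verify square-by-square that the tabulated cycles of $\sigma_a$ and $\sigma_b$ coincide with the formulas
\[
\sigma^Z_a(A,i,j,l)=(\sigma^Y_a(A,i,j),l+h(A,i,j)),\qquad \sigma^Z_b(A,i,j,l)=(\sigma^Y_b(A,i,j),l+v(A,i,j)),
\]
where $\sigma^Y_a,\sigma^Y_b$ are read off from Figures~\ref{xhorizontal}, \ref{xvertical}, \ref{yhorizontal}, \ref{yvertical}, and $h,v$ are the indicator functions of the three slit edges picked in Definition~\ref{origamiz}. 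The two 6-cycles appearing in $\sigma_a$ (at positions $25,38,\ldots$ and $35,47,\ldots$) and the two 6-cycles in $\sigma_b$ are precisely the loci where a slit of $Z$ forces an $l$-flip, so these exceptional cycles are the signal that the identification is correct. This is a bookkeeping step and carries no conceptual obstacle once the labelling convention is fixed.

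Once $Z=Z_3$ is identified, the existence of complementary series is immediate: the previous proposition of Section~\ref{proofoftheorem} shows that the Veech group of $Z_k$ is contained in $\pm\Gamma_6(2k)$, and Proposition~\ref{p.A} together with Remark~\ref{gap} then gives that the regular $\SL(2,\mathbb R)$-representation on $L^2$ of the Teichmüller curve has complementary series as soon as $k\ge 3$. Applying this to $k=3$ yields the first claim of the corollary.

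The remaining assertions (genus $147$, the counts $95$ and $94$ of horizontal/vertical cylinders, and the stratum $\mathcal H(1,5,5,5,2^{138})$) are a direct combinatorial readout from $\sigma_a,\sigma_b$. I would compute: (i) the cycle structure of $\sigma_a$ (resp. $\sigma_b$) gives the horizontal (resp. vertical) cylinder decomposition, yielding $95$ and $94$ cylinders; (ii) the conjugacy-class type of the commutator $[\sigma_a,\sigma_b]=\sigma_a\sigma_b\sigma_a^{-1}\sigma_b^{-1}$ determines the cone angles at the singularities, since a cycle of length $\ell$ corresponds to a point with cone angle $2\pi\ell$, i.e. order $\ell-1$ in the stratum; and (iii) the genus then follows from $\sum(k_i)=2g-2$, giving $2g-2=1+3\cdot 5+138\cdot 2=292$, hence $g=147$. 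The main nontrivial obstacle is not conceptual but computational: the commutator acts on $576$ points and must have exactly one $2$-cycle (the order-$1$ zero), three $6$-cycles (the three order-$5$ zeros), and $138$ three-cycles (the order-$2$ zeros); checking this by hand is tedious, so in practice one would either carry it out on a computer algebra system, or deduce it from the underlying cover $Z\to Y$ together with the ramification data of $r$ computed in the preceding proposition (which pins down the stratum of $Z_3$ intrinsically via Riemann--Hurwitz applied to $q\circ r$ and $p\circ q\circ r$). I would favour the second route: the ramification formulas
\[
\rmd(\mathcal A,p\circ q\circ r),\ \rmd(\mathcal B,p\circ q\circ r),\ \rmd(\mathcal C,p\circ q\circ r),\ \rmd(\mathcal D,p\circ q\circ r)
\]
listed just after Proposition~\ref{mainprop} determine, for $k=3$, the multiset of cone angles, and the stratum $\mathcal H(1,5,5,5,2^{138})$ with $g=147$ drops out; the cylinder counts then provide only a consistency check on the explicit permutation presentation.
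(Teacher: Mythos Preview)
Your overall plan is correct and matches the paper's (implicit) argument: the corollary is simply the specialisation $k=3$ of the preceding proposition, so once the tabulated $(\sigma_a,\sigma_b)$ is identified with $Z_3$, the complementary-series assertion follows from that proposition combined with Proposition~\ref{p.A}, and the numerical invariants drop out of the ramification data you quote. Your suggested route for the stratum and genus via the ramification multisets $\rmd(P,p\circ q\circ r)$, $\rmd(Q,p\circ q\circ r)$, $\rmd(R,p\circ q\circ r)$, $\rmd(S,p\circ q\circ r)$ is exactly the right one and gives $3$ zeroes of order~$5$, $138$ of order~$2$, and one of order~$1$, whence $2g-2=292$.

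There is, however, a slip in your description of the ``exceptional'' cycles. A slit in a degree-$2$ cover does not leave a $6$-cycle intact; it merges the two $6$-cycles over the two sheets into a single $12$-cycle. In $\sigma_a$ there is \emph{one} such $12$-cycle, namely $(25,38,266,280,220,230,26,37,265,279,219,229)$, coming from the single horizontal slit at $(A_1,3,1)$; the cycle $(35,47,275,277,217,239)$ you flag is an ordinary $6$-cycle. In $\sigma_b$ there are \emph{two} $12$-cycles (at $11,\ldots$ and $25,\ldots$), coming from the two vertical slits. This is also what makes the cylinder counts come out: $576=94\cdot 6+1\cdot 12$ gives $95$ horizontal cylinders, and $576=92\cdot 6+2\cdot 12$ gives $94$ vertical cylinders. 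Once you correct this, your bookkeeping goes through unchanged.
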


\section*{Acknowledgements} 

The authors are grateful to A. Avila, N. Bergeron, P. Hubert and J.-C. Yoccoz for discussions related to this note. 
The second author would like to thank Florian Nisbach for proofreading of some parts of the text. Both authors also 
would like to thank the following institutions for their hospitality during the preparation of this text: HIM - Bonn (in the occasion of the ``Geometry and Dynamics of Teich\-m\"uller Spaces'' trimester, May - August,  2010) and MFO - Oberwolfach (in the occasion of the ``Billiards, Flat Surfaces, and Dynamics on Moduli Spaces'' workshop, May 8 - 14, 2011).

\newpage

\appendix

\section{Existence of arithmetic Teichm\"uller curves with complementary series}\label{a.A}
The facts in this Appendix are certainly well-known among experts, but we include this Appendix for sake of completeness. In the sequel, we will follow some arguments that grew out of conversations of the first author with A. Avila and J.-C. Yoccoz.

Consider the level $2$ principal congruence subgroup $\Gamma_2$ of $SL(2,\mathbb{Z})$. Recall that its image $P\Gamma_2$ in $PSL(2,\mathbb{Z})$ is the free subgroup generated by 
$$x:=\left(\begin{array}{cc}1 & 2 \\ 0 & 1\end{array}\right) \quad \textrm{and} \quad y:=\left(\begin{array}{cc} 1 & 0 \\ 2 & 1\end{array}\right).$$
It follows that we have a natural surjective homomorphism $\rho:P\Gamma_2\to\mathbb{Z}$ obtained by counting the number of occurrences of $x$ into a given word $w=w(x,y)\in P\Gamma_2$. In particular, by taking the reduction modulo $N$, we have a family of natural surjective homomorphisms $\rho_N:\Gamma_2\to
\mathbb{Z}/N\ZZ$. We define 
$$\Gamma_2(N):=\textrm{Ker}(\rho_N).$$

\begin{proposition}The regular representation of $\textrm{SL}(2,\mathbb{R})$ on $L^2(\mathbb{H}/\Gamma_2(N))$ exhibits complementary series for all sufficiently large $N$.
\end{proposition}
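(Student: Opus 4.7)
The plan is to show $\lambda_1(\HH/\Gamma_2(N))\to 0$ as $N\to\infty$, where $\lambda_1$ denotes the bottom of the Laplacian spectrum. Since (as recalled in Section~\ref{s.CB}) the existence of complementary series on $L^2(\HH/\Gamma)$ is equivalent to $\lambda_1(\HH/\Gamma)<1/4$, this is enough.

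The starting point is that $\Gamma_2(N)$ is normal in $\Gamma_2$ with cyclic quotient $\ZZ/N\ZZ$, so $\HH/\Gamma_2(N)\to\HH/\Gamma_2$ is a normal cover of degree $N$ with deck group $\ZZ/N\ZZ$. The deck action is unitary and commutes with the Laplacian, whence $L^2(\HH/\Gamma_2(N))=\bigoplus_{k=0}^{N-1}L^2_k$ splits into isotypic components for the characters $\chi_k(\ell)=e^{2\pi ik\ell/N}$, each preserved by the Laplacian. The component $L^2_k$ is identified with $L^2$-sections of the flat unitary line bundle $\mathcal{L}_{k,N}$ on $\HH/\Gamma_2$ with monodromy $\chi_k\circ\rho_N$. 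For $k=1$ this monodromy sends $x$ to $e^{2\pi i/N}$ and $y$ to $1$, so $\mathcal{L}_{1,N}$ deforms to the trivial bundle as $N\to\infty$; this should translate into a small first eigenvalue for the corresponding twisted Laplacian, bounding $\lambda_1(\HH/\Gamma_2(N))$ from above.

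Concretely, following Selberg, I would construct a test section with small Rayleigh quotient. A fundamental domain for $\Gamma_2(N)$ is made of $N$ cyclically arranged copies $Q_0,\dots,Q_{N-1}$ of one for $\Gamma_2$ (glued across the $\infty$-cusp via powers of $x$, and self-glued via $y$ in each copy). Consider the function on $\HH/\Gamma_2(N)$ which equals $\cos(2\pi i/N)$ on the thick part of $Q_i$ and is smoothed across the interfaces between adjacent copies. Its $L^2$-norm is of order $N$, whereas its hyperbolic gradient is supported on $O(N)$ bounded-length interfaces with amplitude $O(1/N)$, producing a Dirichlet integral of order $1/N$ and a Rayleigh quotient of order $1/N^2$. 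The hard part will be the behaviour at the $\infty$-cusp: since $\rho_N(x)=1$, this cusp of the base lifts to a \emph{single} cusp of $\HH/\Gamma_2(N)$ of horocyclic circumference $2N$, through which the $N$ cells $Q_i$ are stitched cyclically with a unit horizontal shift at each identification. Consequently, a naive cosine test function on this elongated cusp has divergent Dirichlet integral and must be multiplied by a Lipschitz cutoff vanishing sufficiently deep in the cusp; the key elementary calculation, in horocyclic coordinates, is to verify that the cutoff contribution remains dominated by the $1/N^2$ estimate. An alternative way around this technical point is to invoke a Brooks-type comparison between $\lambda_1(\HH/\Gamma_2(N))$ and the spectral gap of the Schreier coset graph of $\Gamma_2/\Gamma_2(N)$ with respect to $\{x,y\}$, which is a cycle of length $N$ with self-loops and thus has first Laplacian eigenvalue $\Theta(1/N^2)$.
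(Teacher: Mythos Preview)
Your route is genuinely different from the paper's. The appendix argues by contradiction through Ratner's correlation decay: taking $U=\mathcal{F}_2\cap\{\mathrm{Im}\,z<1\}$ and $V=x^{\lfloor N/2\rfloor}(U)$, one has $d(U,V)\gtrsim 2\log N$ on $\HH/\Gamma_2(N)$, so $U\cap a(t)V=\emptyset$ for $t$ up to this distance; absence of complementary series would then force, via Ratner, $\mathrm{Area}(U)\le\tilde K\,t_Ne^{-t_N}$, which is impossible since $\mathrm{Area}(U)\asymp 1/N$ while $t_Ne^{-t_N}\asymp(\log N)/N^2$. You instead go for the variational characterisation of $\lambda_1$ directly, which is closer in spirit to what the main text does for $\Gamma_6$ via Buser's inequality.

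You have correctly located the obstruction: the interfaces between adjacent copies $Q_i$ are geodesics joining the two ``big'' cusps of $\HH/\Gamma_2(N)$ (the cusps over $\infty$ and over $1$, whose stabilisers $x$ and $xy^{-1}$ both have $\rho_N$-image $1$, so each lifts to a single cusp of horocyclic width $2N$), hence have infinite hyperbolic length. But your hope that the cutoff contribution stays below $1/N$ is too optimistic. In horocyclic coordinates on the $\infty$-cusp, with $f=\cos(\pi x/N)\,\psi(y)$ and $\psi$ going from $1$ to $0$ on $[Y_1,Y_1+W]$, the Dirichlet integral is
\[
\int_0^{2N}\!\!\int_{Y_1}^{Y_1+W}\Bigl[\frac{\pi^2}{N^2}\sin^2\!\Bigl(\frac{\pi x}{N}\Bigr)\psi(y)^2+\cos^2\!\Bigl(\frac{\pi x}{N}\Bigr)\psi'(y)^2\Bigr]dy\,dx\;\asymp\;\frac{W}{N}+\frac{N}{W}\;\ge\;2,
\]
so each big cusp contributes $\asymp 1$ to the Dirichlet energy regardless of $W$. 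The resulting Rayleigh quotient is therefore $O(1/N)$, not $O(1/N^2)$. This still tends to $0$ and proves the proposition, but the bound is weaker than you announce. This is precisely why the paper passes from $\Gamma_2$ (genus $0$, all interfaces run to cusps) to $\Gamma_6$ (genus $1$, a finite-length non-separating geodesic is available) before invoking Buser in the main text; see the footnote following Definition~\ref{defpg}. As for the Brooks alternative, the standard comparison with the Schreier graph is stated for cocompact lattices and does not apply here without an extra argument handling the cusps.
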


\begin{proof}Denoting by $\mathcal{F}_1:=\{z\in\mathbb{H}: |\textrm{Re}(z)|\leq 1/2, |z|\geq 1\}$ the standard fundamental domain of the action of $SL(2,\mathbb{Z})$ on $\mathbb{H}$, one has that $\mathcal{F}_2:=\bigcup\limits_{l=1}^6\alpha_l^{-1}(\mathcal{F}_1)$, where $\alpha_1=\textrm{Id}$, $\alpha_2 = T = \left(\begin{array}{cc}1&1\\0&1\end{array}\right)$, $\alpha_3=S=\left(\begin{array}{cc}0&-1\\1&0\end{array}\right)$, $\alpha_4=TS$, $\alpha_5=ST$ and $\alpha_6=T^{-1}ST$, is a fundamental domain for the action of $\Gamma_2$ on $\mathbb{H}$. In particular, by definition, $\mathcal{F}_N:=\bigcup\limits_{j=0}^{N-1}x^{j}(\mathcal{F}_2)$ is a fundamental domain for the action of $\Gamma_2(N)$ on $\mathbb{H}$. 

Let $U:=\mathcal{F}_2\cap\{z\in\mathbb{H}:|\textrm{Im}(z)|<1\}$ and $V:=x^{n}(U)$ where $n=\lfloor N/2\rfloor$. Since the hyperbolic distance $\rho(z,w)$ between $z,w\in\mathbb{H}$ verifies $\cosh\rho(z,w)=1+|z-w|^2/2\textrm{Im}(z)\textrm{Im}(w)$, it follows that the hyperbolic distance $d(U,V)$ between $U$ and $V$ satisfies
$$\cosh d(U,V)\geq1+2(n-1)^2$$
because $|\textrm{Im}(z)|, |\textrm{Im}(w)|\leq 1$ and $|z-w|\geq 2(n-1)$ for any $z\in U$, $w\in V=x^n(U)$.
From our choices of $U$ and $V$ (inside the fundamental domain $\mathcal{F}_N$), it follows that $U$ and $V$ are far apart by $\textrm{arccosh}(1+2(n-1)^2)$ (at least) on $\mathbb{H}/\Gamma_2(N)$. 

In other words, for all $0\leq t\leq\textrm{arccosh}(1+2(n-1)^2)$, we have that $U\cap a(t)V=\emptyset$. Here, $a(t)=\textrm{diag}(e^t,e^{-t})$ is the diagonal subgroup of $\textrm{SL}(2,\mathbb{R})$ and we're slightly abusing the notation by identifying $U,V\subset\mathbb{H}/\Gamma_2(N)=SO(2,\mathbb{R})\backslash SL(2,\mathbb{R})/\Gamma_2(N)$ with their lifts to $SL(2,\mathbb{R})/\Gamma_2(N)$. Therefore, by taking $f=\frac{\chi_U}{\sqrt{\textrm{Area}(U)}}$, $g=\frac{\chi_V}{\sqrt{\textrm{Area}(V)}}$ and $u=f-\int f$, $v=g-\int g$, we get
\begin{eqnarray*}
\int u\cdot v\circ a(t)&=&\int\left(f-\int f\right)\cdot\left(g\circ a(t)-\int g\right) \\
&=& \int (f\cdot g\circ a(t))-\int f\int g \\ 
&=& -\int f \int g
\end{eqnarray*}
for every $0\leq t\leq\textrm{arccosh}(1+2(n-1)^2)$. Here $\textrm{Area}$ is the normalised hyperbolic area form of $\mathbb{H}/\Gamma_2(N)$.

Assume that the regular representation of $SL(2,\mathbb{R})$ on $L^2(\mathbb{H}/\Gamma_2(N))$ doesn't exhibit complementary series. By Ratner's work~\cite{Rt}, it follows from the identity above that, for all $0\leq t\leq\textrm{arccosh}(1+2(n-1)^2)$, 
\begin{eqnarray*}
\textrm{Area}(U)&=&\sqrt{\textrm{Area}(U)}\cdot \sqrt{\textrm{Area}(V)}=\int f \int g=\left|\int u\cdot v\circ a(t)\right|\\ 
&\leq& \tilde{K}\|u\|_{L^2}\|v\|_{L^2}te^{-t} \\ 
&=& \tilde{K}\left(\|f\|_{L^2}^2-\|f\|_{L^1}^2\right)^{1/2}\left(\|g\|_{L^2}^2-\|g\|_{L^1}^2\right)^{1/2}te^{-t} \\
&=& \tilde{K}(1-\textrm{Area}(U))te^{-t}\\
&\leq& \tilde{K}te^{-t}
\end{eqnarray*}
where $\tilde{K}>0$ is a universal constant. In our case, since the hyperbolic area of $\mathcal{F}_2-U$ is $2$ and the hyperbolic area of $\mathcal{F}_2$ is $2\pi$ (equal to the area of $\mathbb{H}/\Gamma_2$), we have $\textrm{Area}(U)=(2\pi-2)/2\pi N$.

Thus, in the previous estimate, we see that the absence of complementary series implies 
$$\frac{2\pi-2}{2\pi N}\leq\tilde{K} t_Ne^{-t_N}$$
with $t_N=\textrm{arccosh}(1+2(\lfloor \frac{N}{2}\rfloor-1)^2)$. Since this inequality is false for all sufficiently large $N$, the proof of the proposition is complete.
\end{proof}

\begin{remark}The constant $\tilde{K}$ in Ratner's work~\cite{Rt} can be rendered explicit (by bookkeeping it along Ratner's arguments). By following~\cite{Rt} closely, we found that one can take $\tilde{K}=\frac{(32+\sqrt{2})}{3e^3(1-e^{-4})^2}+(1+2\sqrt{2})e$ in the proof of the previous proposition. In particular, by inserting this into the estimate 
$$\textrm{Area}(U)\leq \tilde{K}(1-\textrm{Area}(U))te^{-t}$$
derived above, one eventually find that complementary series show up as soon as $N\geq 170$.
\end{remark}

\begin{remark}The proof of the previous proposition can be adapted to show that the size of the spectral gap becomes arbitrarily small for a sufficiently large $N$. Indeed, if there was a uniform size $\sigma>0$ for the spectral gap along the family $\mathbb{H}/\Gamma_2(N)$, one could apply Ratner's work~\cite{Rt} to deduce 
$$\textrm{Area}(U)\leq\tilde{K}t_Ne^{-\sigma\cdot t_N}$$
where $U$ and $t_N$ are as in the proof of the previous proposition. Hence, 
$$\frac{2\pi-2}{2\pi N}\leq\tilde{K} t_Ne^{-t_N}$$
with $t_N=\textrm{arccosh}(1+2(\lfloor \frac{N}{2}\rfloor-1)^2)$, a contradiction for all sufficiently large $N$.
\end{remark}

\newpage

\section{Picture of the origami in the case $k=3$}

Figure~\ref{picture} shows the origami $Z$ from 
Corollary~\ref{c.origamiZ3}.\\

\newcommand{\sa}{$157$}
\renewcommand{\sb}{$529$}
\renewcommand{\sc}{$541$}
\newcommand{\sd}{$289$}
\newcommand{\se}{$303$}
\renewcommand{\sf}{$147$}
\newcommand{\sg}{$183$}
\newcommand{\sh}{$315$}
\newcommand{\si}{$325$}
\newcommand{\sj}{$553$}
\newcommand{\sk}{$565$}
\renewcommand{\sl}{$169$}
\newcommand{\ta}{$158$}
\newcommand{\tb}{$530$}
\newcommand{\tc}{$542$}
\newcommand{\td}{$290$}
\newcommand{\te}{$304$}
\newcommand{\tf}{$148$}
\newcommand{\tg}{$184$}
\renewcommand{\th}{$316$}
\newcommand{\ti}{$326$}
\newcommand{\tj}{$554$}
\newcommand{\tk}{$566$}
\newcommand{\tl}{$170$}
\newcommand{\ua}{$61$}
\newcommand{\ub}{$433$}
\newcommand{\uc}{$445$}
\newcommand{\ud}{$337$}
\newcommand{\ue}{$349$}
\newcommand{\uf}{$49$}
\newcommand{\ug}{$85$}
\newcommand{\uh}{$361$}
\newcommand{\ui}{$373$}
\newcommand{\uj}{$457$}
\newcommand{\uk}{$469$}
\newcommand{\ul}{$73$}
\renewcommand{\va}{$62$}
\renewcommand{\vb}{$434$}
\renewcommand{\vc}{$446$}
\newcommand{\vd}{$338$}
\renewcommand{\ve}{$350$}
\renewcommand{\vf}{$50$}
\renewcommand{\vg}{$86$}
\renewcommand{\vh}{$362$}
\renewcommand{\vi}{$374$}
\renewcommand{\vj}{$458$}
\renewcommand{\vk}{$470$}
\renewcommand{\vl}{$74$}

\noindent
\scalebox{.45}{\input{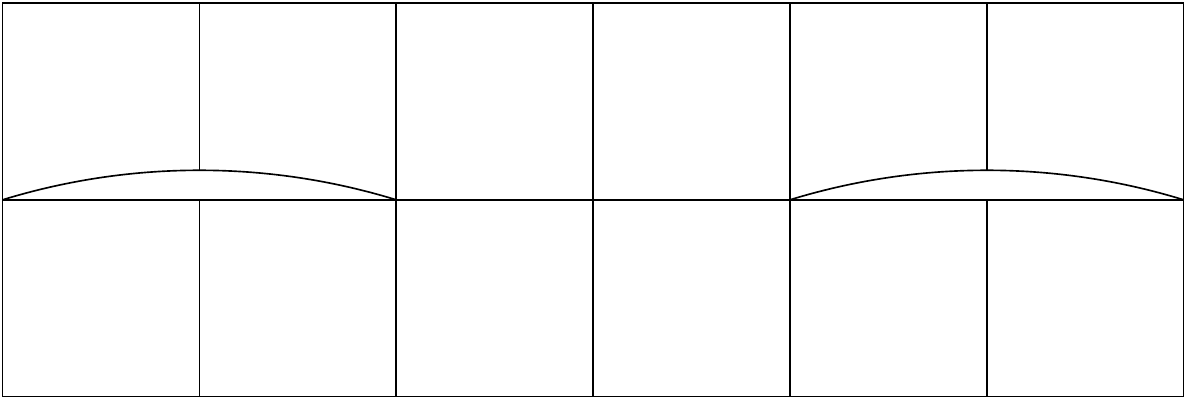_t}}\hspace*{5mm}
\scalebox{.45 }{\input{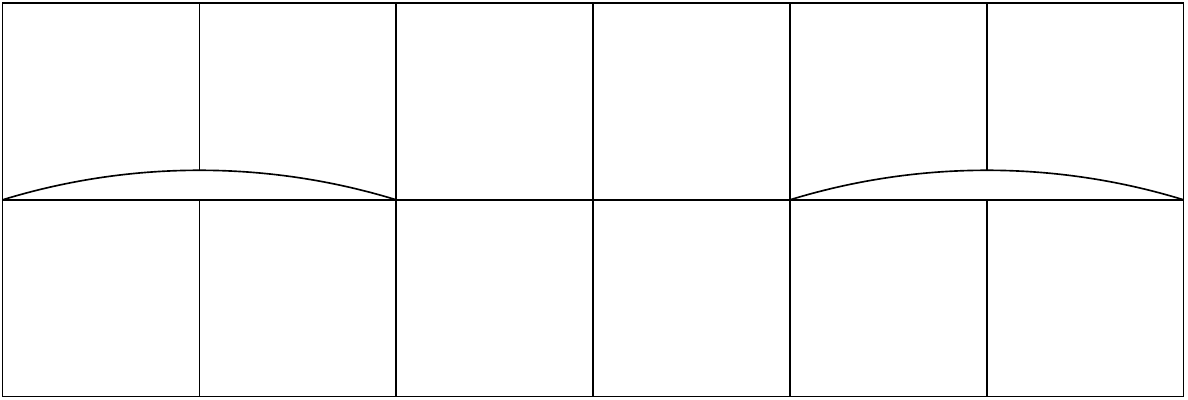_t}}\\[3mm]
\scalebox{.45}{\input{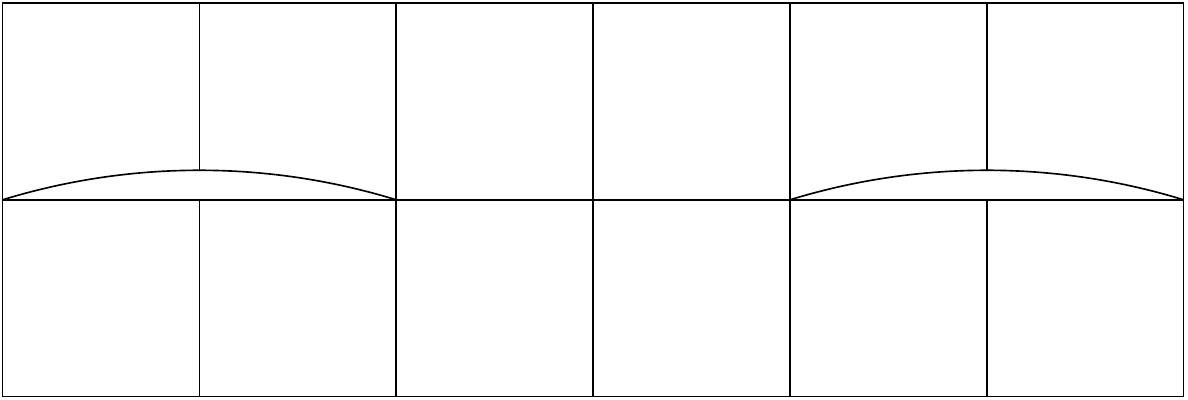_t}}\hspace*{5mm}
\scalebox{.45}{\input{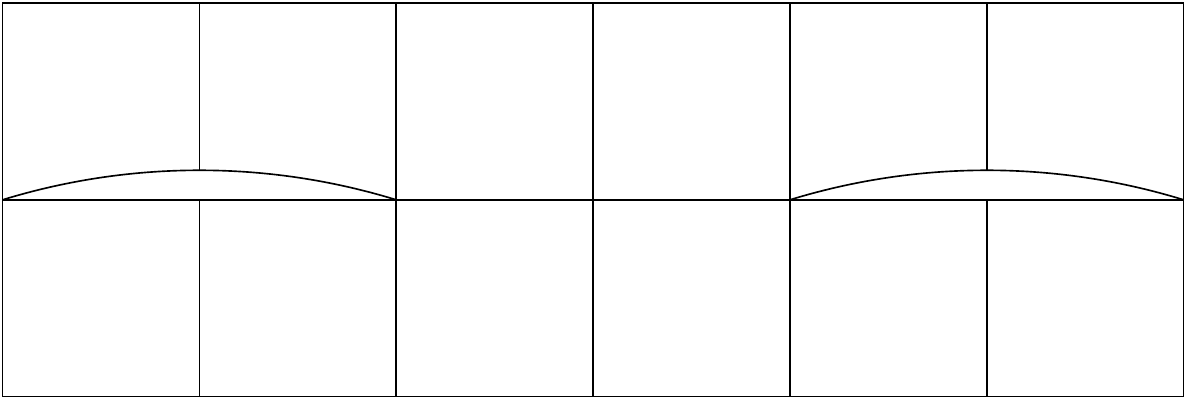_t}}\\[3mm]
\renewcommand{\sa}{$397$}
\renewcommand{\sb}{$481$}
\renewcommand{\sc}{$493$}
\renewcommand{\sd}{$97$}
\renewcommand{\se}{$109$}
\renewcommand{\sf}{$385$}
\renewcommand{\sg}{$421$}
\renewcommand{\sh}{$121$}
\renewcommand{\si}{$133$}
\renewcommand{\sj}{$505$}
\renewcommand{\sk}{$517$}
\renewcommand{\sl}{$409$}
\renewcommand{\ta}{$398$}
\renewcommand{\tb}{$482$}
\renewcommand{\tc}{$494$}
\renewcommand{\td}{$98$}
\renewcommand{\te}{$110$}
\renewcommand{\tf}{$386$}
\renewcommand{\tg}{$422$}
\renewcommand{\th}{$122$}
\renewcommand{\ti}{$134$}
\renewcommand{\tj}{$506$}
\renewcommand{\tk}{$518$}
\renewcommand{\tl}{$410$}
\renewcommand{\ua}{$253$}
\renewcommand{\ub}{$1$}
\renewcommand{\uc}{$13$}
\renewcommand{\ud}{$193$}
\renewcommand{\ue}{$207$}
\renewcommand{\uf}{$243$}
\renewcommand{\ug}{$279$}
\renewcommand{\uh}{$219$}
\renewcommand{\ui}{$229$}
\renewcommand{\uj}{$25$}
\renewcommand{\uk}{$37$}
\renewcommand{\ul}{$265$}
\renewcommand{\va}{$254$}
\renewcommand{\vb}{$2$}
\renewcommand{\vc}{$14$}
\renewcommand{\vd}{$194$}
\renewcommand{\ve}{$208$}
\renewcommand{\vf}{$244$}
\renewcommand{\vg}{$280$}
\renewcommand{\vh}{$220$}
\renewcommand{\vi}{$230$}
\renewcommand{\vj}{$26$}
\renewcommand{\vk}{$38$}
\renewcommand{\vl}{$266$}

\scalebox{.45 }{\input{example4a.pdf_t}}\hspace*{5mm}
\scalebox{.45 }{\input{example4b.pdf_t}}\\[3mm]
\scalebox{.45 }{\input{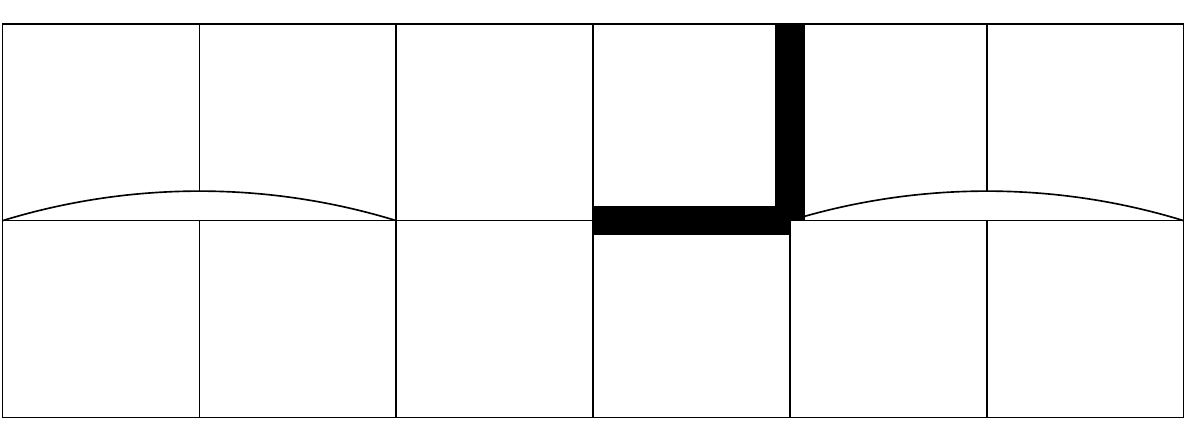_t}}\hspace*{5mm}
\scalebox{.45 }{\input{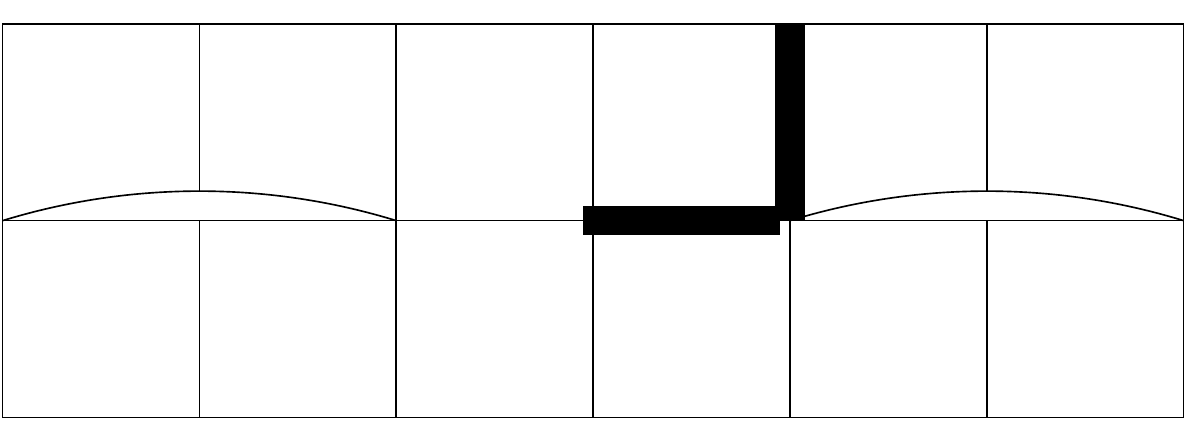_t}}
\hspace* {3mm}{\fs $j = 1$}\\[10mm]
\renewcommand{\sa}{$159$}
\renewcommand{\sb}{$531$}
\renewcommand{\sc}{$543$}
\renewcommand{\sd}{$291$}
\renewcommand{\se}{$305$}
\renewcommand{\sf}{$149$}
\renewcommand{\sg}{$185$}
\renewcommand{\sh}{$317$}
\renewcommand{\si}{$327$}
\renewcommand{\sj}{$555$}
\renewcommand{\sk}{$567$}
\renewcommand{\sl}{$171$}
\renewcommand{\ta}{$160$}
\renewcommand{\tb}{$532$}
\renewcommand{\tc}{$544$}
\renewcommand{\td}{$292$}
\renewcommand{\te}{$306$}
\renewcommand{\tf}{$150$}
\renewcommand{\tg}{$186$}
\renewcommand{\th}{$318$}
\renewcommand{\ti}{$328$}
\renewcommand{\tj}{$556$}
\renewcommand{\tk}{$568$}
\renewcommand{\tl}{$172$}
\renewcommand{\ua}{$63$}
\renewcommand{\ub}{$435$}
\renewcommand{\uc}{$447$}
\renewcommand{\ud}{$339$}
\renewcommand{\ue}{$351$}
\renewcommand{\uf}{$51$}
\renewcommand{\ug}{$87$}
\renewcommand{\uh}{$363$}
\renewcommand{\ui}{$375$}
\renewcommand{\uj}{$459$}
\renewcommand{\uk}{$471$}
\renewcommand{\ul}{$75$}
\renewcommand{\va}{$64$}
\renewcommand{\vb}{$436$}
\renewcommand{\vc}{$448$}
\renewcommand{\vd}{$340$}
\renewcommand{\ve}{$352$}
\renewcommand{\vf}{$52$}
\renewcommand{\vg}{$88$}
\renewcommand{\vh}{$364$}
\renewcommand{\vi}{$376$}
\renewcommand{\vj}{$460$}
\renewcommand{\vk}{$472$}
\renewcommand{\vl}{$76$}

\scalebox{.45 }{\input{example4a.pdf_t}}\hspace*{5mm}
\scalebox{.45 }{\input{example4b.pdf_t}}\\[3mm]
\scalebox{.45 }{\input{example4c.pdf_t}}\hspace*{5mm}
\scalebox{.45 }{\input{example4d.pdf_t}}\\[3mm]
\renewcommand{\ua}{$255$}
\renewcommand{\ub}{$3$}
\renewcommand{\uc}{$15$}
\renewcommand{\ud}{$195$}
\renewcommand{\ue}{$209$}
\renewcommand{\uf}{$245$}
\renewcommand{\ug}{$281$}
\renewcommand{\uh}{$221$}
\renewcommand{\ui}{$231$}
\renewcommand{\uj}{$27$}
\renewcommand{\uk}{$39$}
\renewcommand{\ul}{$267$}
\renewcommand{\ta}{$400$}
\renewcommand{\tb}{$484$}
\renewcommand{\tc}{$496$}
\renewcommand{\td}{$100$}
\renewcommand{\te}{$112$}
\renewcommand{\tf}{$388$}
\renewcommand{\tg}{$424$}
\renewcommand{\th}{$124$}
\renewcommand{\ti}{$136$}
\renewcommand{\tj}{$508$}
\renewcommand{\tk}{$520$}
\renewcommand{\tl}{$412$}
\renewcommand{\sa}{$399$}
\renewcommand{\sb}{$483$}
\renewcommand{\sc}{$495$}
\renewcommand{\sd}{$99$}
\renewcommand{\se}{$111$}
\renewcommand{\sf}{$387$}
\renewcommand{\sg}{$423$}
\renewcommand{\sh}{$123$}
\renewcommand{\si}{$135$}
\renewcommand{\sj}{$507$}
\renewcommand{\sk}{$519$}
\renewcommand{\sl}{$411$}
\renewcommand{\va}{$256$}
\renewcommand{\vb}{$4$}
\renewcommand{\vc}{$16$}
\renewcommand{\vd}{$196$}
\renewcommand{\ve}{$210$}
\renewcommand{\vf}{$246$}
\renewcommand{\vg}{$282$}
\renewcommand{\vh}{$222$}
\renewcommand{\vi}{$232$}
\renewcommand{\vj}{$28$}
\renewcommand{\vk}{$40$}
\renewcommand{\vl}{$268$}

\scalebox{.45 }{\input{example4a.pdf_t}}\hspace*{5mm}
\scalebox{.45 }{\input{example4b.pdf_t}}\\[3mm]
\scalebox{.45 }{\input{example4c.pdf_t}}\hspace*{5mm}
\scalebox{.45 }{\input{example4d.pdf_t}}
\hspace* {3mm}{\fs $j = 2$}
\newpage
\renewcommand{\sa}{$161$}
\renewcommand{\sb}{$533$}
\renewcommand{\sc}{$545$}
\renewcommand{\sd}{$293$}
\renewcommand{\se}{$307$}
\renewcommand{\sf}{$151$}
\renewcommand{\sg}{$187$}
\renewcommand{\sh}{$319$}
\renewcommand{\si}{$329$}
\renewcommand{\sj}{$557$}
\renewcommand{\sk}{$569$}
\renewcommand{\sl}{$173$}
\renewcommand{\ta}{$162$}
\renewcommand{\tb}{$534$}
\renewcommand{\tc}{$546$}
\renewcommand{\td}{$294$}
\renewcommand{\te}{$308$}
\renewcommand{\tf}{$152$}
\renewcommand{\tg}{$188$}
\renewcommand{\th}{$320$}
\renewcommand{\ti}{$330$}
\renewcommand{\tj}{$558$}
\renewcommand{\tk}{$570$}
\renewcommand{\tl}{$174$}
\renewcommand{\va}{$66$}
\renewcommand{\vb}{$438$}
\renewcommand{\vc}{$450$}
\renewcommand{\vd}{$342$}
\renewcommand{\ve}{$354$}
\renewcommand{\vf}{$54$}
\renewcommand{\ug}{$89$}
\renewcommand{\uh}{$365$}
\renewcommand{\ui}{$377$}
\renewcommand{\uj}{$461$}
\renewcommand{\uk}{$473$}
\renewcommand{\ul}{$77$}
\renewcommand{\ua}{$65$}
\renewcommand{\ub}{$437$}
\renewcommand{\uc}{$449$}
\renewcommand{\ud}{$341$}
\renewcommand{\ue}{$353$}
\renewcommand{\uf}{$53$}
\renewcommand{\vg}{$90$}
\renewcommand{\vh}{$366$}
\renewcommand{\vi}{$378$}
\renewcommand{\vj}{$462$}
\renewcommand{\vk}{$474$}
\renewcommand{\vl}{$78$}

\noindent
\scalebox{.45 }{\input{example4a.pdf_t}}\hspace*{5mm}
\scalebox{.45 }{\input{example4b.pdf_t}}\\[3mm]
\scalebox{.45 }{\input{example4c.pdf_t}}\hspace*{5mm}
\scalebox{.45 }{\input{example4d.pdf_t}}\\[3mm]
\renewcommand{\sa}{$401$}
\renewcommand{\sb}{$485$}
\renewcommand{\sc}{$497$}
\renewcommand{\sd}{$101$}
\renewcommand{\se}{$113$}
\renewcommand{\sf}{$389$}
\renewcommand{\sg}{$425$}
\renewcommand{\sh}{$125$}
\renewcommand{\si}{$137$}
\renewcommand{\sj}{$509$}
\renewcommand{\sk}{$521$}
\renewcommand{\sl}{$413$}
\renewcommand{\ta}{$402$}
\renewcommand{\tb}{$486$}
\renewcommand{\tc}{$498$}
\renewcommand{\td}{$102$}
\renewcommand{\te}{$114$}
\renewcommand{\tf}{$390$}
\renewcommand{\tg}{$426$}
\renewcommand{\th}{$126$}
\renewcommand{\ti}{$138$}
\renewcommand{\tj}{$510$}
\renewcommand{\tk}{$522$}
\renewcommand{\tl}{$414$}
\renewcommand{\ua}{$257$}
\renewcommand{\ub}{$5$}
\renewcommand{\uc}{$17$}
\renewcommand{\ud}{$197$}
\renewcommand{\ue}{$211$}
\renewcommand{\uf}{$247$}
\renewcommand{\ug}{$283$}
\renewcommand{\uh}{$223$}
\renewcommand{\ui}{$233$}
\renewcommand{\uj}{$29$}
\renewcommand{\uk}{$41$}
\renewcommand{\ul}{$269$}
\renewcommand{\va}{$258$}
\renewcommand{\vb}{$6$}
\renewcommand{\vc}{$18$}
\renewcommand{\vd}{$198$}
\renewcommand{\ve}{$212$}
\renewcommand{\vf}{$248$}
\renewcommand{\vg}{$284$}
\renewcommand{\vh}{$224$}
\renewcommand{\vi}{$234$}
\renewcommand{\vj}{$30$}
\renewcommand{\vk}{$42$}
\renewcommand{\vl}{$270$}

\scalebox{.45 }{\input{example4a.pdf_t}}\hspace*{5mm}
\scalebox{.45 }{\input{example4b.pdf_t}}\\[3mm]
\scalebox{.45 }{\input{example4c.pdf_t}}\hspace*{5mm}
\scalebox{.45 }{\input{example4d.pdf_t}}
\hspace* {3mm}{\fs $j = 3$}\\[10mm]
\renewcommand{\sa}{$163$}
\renewcommand{\sb}{$535$}
\renewcommand{\sc}{$547$}
\renewcommand{\sd}{$295$}
\renewcommand{\se}{$309$}
\renewcommand{\sf}{$153$}
\renewcommand{\sg}{$189$}
\renewcommand{\sh}{$321$}
\renewcommand{\si}{$331$}
\renewcommand{\sj}{$559$}
\renewcommand{\sk}{$571$}
\renewcommand{\sl}{$175$}
\renewcommand{\ta}{$164$}
\renewcommand{\tb}{$536$}
\renewcommand{\tc}{$548$}
\renewcommand{\td}{$296$}
\renewcommand{\te}{$310$}
\renewcommand{\tf}{$154$}
\renewcommand{\tg}{$190$}
\renewcommand{\th}{$322$}
\renewcommand{\ti}{$332$}
\renewcommand{\tj}{$560$}
\renewcommand{\tk}{$572$}
\renewcommand{\tl}{$176$}
\renewcommand{\ua}{$67$}
\renewcommand{\ub}{$439$}
\renewcommand{\uc}{$451$}
\renewcommand{\ud}{$343$}
\renewcommand{\ue}{$355$}
\renewcommand{\uf}{$55$}
\renewcommand{\ug}{$91$}
\renewcommand{\uh}{$367$}
\renewcommand{\ui}{$379$}
\renewcommand{\uj}{$463$}
\renewcommand{\uk}{$475$}
\renewcommand{\ul}{$79$}
\renewcommand{\va}{$68$}
\renewcommand{\vb}{$440$}
\renewcommand{\vc}{$452$}
\renewcommand{\vd}{$344$}
\renewcommand{\ve}{$356$}
\renewcommand{\vf}{$56$}
\renewcommand{\vg}{$92$}
\renewcommand{\vh}{$368$}
\renewcommand{\vi}{$380$}
\renewcommand{\vj}{$464$}
\renewcommand{\vk}{$476$}
\renewcommand{\vl}{$80$}

\scalebox{.45 }{\input{example4a.pdf_t}}\hspace*{5mm}
\scalebox{.45 }{\input{example4b.pdf_t}}\\[3mm]
\scalebox{.45 }{\input{example4c.pdf_t}}\hspace*{5mm}
\scalebox{.45 }{\input{example4d.pdf_t}}\\[3mm]
\renewcommand{\sa}{$403$}
\renewcommand{\sb}{$487$}
\renewcommand{\sc}{$499$}
\renewcommand{\sd}{$103$}
\renewcommand{\se}{$115$}
\renewcommand{\sf}{$391$}
\renewcommand{\sg}{$427$}
\renewcommand{\sh}{$127$}
\renewcommand{\si}{$139$}
\renewcommand{\sj}{$511$}
\renewcommand{\sk}{$523$}
\renewcommand{\sl}{$415$}
\renewcommand{\ta}{$404$}
\renewcommand{\tb}{$488$}
\renewcommand{\tc}{$500$}
\renewcommand{\td}{$104$}
\renewcommand{\te}{$116$}
\renewcommand{\tf}{$392$}
\renewcommand{\tg}{$428$}
\renewcommand{\th}{$128$}
\renewcommand{\ti}{$140$}
\renewcommand{\tj}{$512$}
\renewcommand{\tk}{$524$}
\renewcommand{\tl}{$416$}
\renewcommand{\ua}{$259$}
\renewcommand{\ub}{$7$}
\renewcommand{\uc}{$19$}
\renewcommand{\ud}{$199$}
\renewcommand{\ue}{$213$}
\renewcommand{\uf}{$249$}
\renewcommand{\ug}{$285$}
\renewcommand{\uh}{$225$}
\renewcommand{\ui}{$235$}
\renewcommand{\uj}{$31$}
\renewcommand{\uk}{$43$}
\renewcommand{\ul}{$271$}
\renewcommand{\va}{$260$}
\renewcommand{\vb}{$8$}
\renewcommand{\vc}{$20$}
\renewcommand{\vd}{$200$}
\renewcommand{\ve}{$214$}
\renewcommand{\vf}{$250$}
\renewcommand{\vg}{$286$}
\renewcommand{\vh}{$226$}
\renewcommand{\vi}{$236$}
\renewcommand{\vj}{$32$}
\renewcommand{\vk}{$44$}
\renewcommand{\vl}{$272$}

\scalebox{.45 }{\input{example4a.pdf_t}}\hspace*{5mm}
\scalebox{.45 }{\input{example4b.pdf_t}}\\[3mm]
\scalebox{.45 }{\input{example4c.pdf_t}}\hspace*{5mm}
\scalebox{.45 }{\input{example4d.pdf_t}}
\hspace* {3mm}{\fs $j = 4$}
\newpage
\enlargethispage{\baselineskip}
\renewcommand{\sa}{$165$}
\renewcommand{\sb}{$537$}
\renewcommand{\sc}{$549$}
\renewcommand{\sd}{$297$}
\renewcommand{\se}{$311$}
\renewcommand{\sf}{$155$}
\renewcommand{\sg}{$191$}
\renewcommand{\sh}{$323$}
\renewcommand{\si}{$333$}
\renewcommand{\sj}{$561$}
\renewcommand{\sk}{$573$}
\renewcommand{\sl}{$177$}
\renewcommand{\ta}{$166$}
\renewcommand{\tb}{$538$}
\renewcommand{\tc}{$550$}
\renewcommand{\td}{$298$}
\renewcommand{\te}{$312$}
\renewcommand{\tf}{$156$}
\renewcommand{\tg}{$192$}
\renewcommand{\th}{$324$}
\renewcommand{\ti}{$334$}
\renewcommand{\tj}{$562$}
\renewcommand{\tk}{$574$}
\renewcommand{\tl}{$178$}
\renewcommand{\ua}{$69$}
\renewcommand{\ub}{$441$}
\renewcommand{\uc}{$453$}
\renewcommand{\ud}{$345$}
\renewcommand{\ue}{$357$}
\renewcommand{\uf}{$57$}
\renewcommand{\ug}{$93$}
\renewcommand{\uh}{$369$}
\renewcommand{\ui}{$381$}
\renewcommand{\uj}{$465$}
\renewcommand{\uk}{$477$}
\renewcommand{\ul}{$81$}
\renewcommand{\va}{$70$}
\renewcommand{\vb}{$442$}
\renewcommand{\vc}{$454$}
\renewcommand{\vd}{$346$}
\renewcommand{\ve}{$358$}
\renewcommand{\vf}{$58$}
\renewcommand{\vg}{$94$}
\renewcommand{\vh}{$370$}
\renewcommand{\vi}{$382$}
\renewcommand{\vj}{$466$}
\renewcommand{\vk}{$478$}
\renewcommand{\vl}{$82$}

\noindent
\scalebox{.45 }{\input{example4a.pdf_t}}\hspace*{5mm}
\scalebox{.45 }{\input{example4b.pdf_t}}\\[3mm]
\scalebox{.45 }{\input{example4c.pdf_t}}\hspace*{5mm}
\scalebox{.45 }{\input{example4d.pdf_t}}\\[3mm]
\renewcommand{\sa}{$405$}
\renewcommand{\sb}{$489$}
\renewcommand{\sc}{$501$}
\renewcommand{\sd}{$105$}
\renewcommand{\se}{$117$}
\renewcommand{\sf}{$393$}
\renewcommand{\sg}{$429$}
\renewcommand{\sh}{$129$}
\renewcommand{\si}{$141$}
\renewcommand{\sj}{$513$}
\renewcommand{\sk}{$525$}
\renewcommand{\sl}{$417$}
\renewcommand{\ta}{$406$}
\renewcommand{\tb}{$490$}
\renewcommand{\tc}{$502$}
\renewcommand{\td}{$106$}
\renewcommand{\te}{$118$}
\renewcommand{\tf}{$394$}
\renewcommand{\tg}{$430$}
\renewcommand{\th}{$130$}
\renewcommand{\ti}{$142$}
\renewcommand{\tj}{$514$}
\renewcommand{\tk}{$526$}
\renewcommand{\tl}{$418$}
\renewcommand{\ua}{$261$}
\renewcommand{\ub}{$9$}
\renewcommand{\uc}{$21$}
\renewcommand{\ud}{$201$}
\renewcommand{\ue}{$215$}
\renewcommand{\uf}{$251$}
\renewcommand{\ug}{$287$}
\renewcommand{\uh}{$227$}
\renewcommand{\ui}{$237$}
\renewcommand{\uj}{$33$}
\renewcommand{\uk}{$45$}
\renewcommand{\ul}{$273$}
\renewcommand{\va}{$262$}
\renewcommand{\vb}{$10$}
\renewcommand{\vc}{$22$}
\renewcommand{\vd}{$202$}
\renewcommand{\ve}{$216$}
\renewcommand{\vf}{$252$}
\renewcommand{\vg}{$288$}
\renewcommand{\vh}{$228$}
\renewcommand{\vi}{$238$}
\renewcommand{\vj}{$34$}
\renewcommand{\vk}{$46$}
\renewcommand{\vl}{$274$}

\scalebox{.45 }{\input{example4a.pdf_t}}\hspace*{5mm}
\scalebox{.45 }{\input{example4b.pdf_t}}\\[3mm]
\scalebox{.45 }{\input{example4c.pdf_t}}\hspace*{5mm}
\scalebox{.45 }{\input{example4d.pdf_t}}
\hspace* {3mm}{\fs $j = 5$}\\[10mm]
\renewcommand{\sa}{$167$}
\renewcommand{\sb}{$539$}
\renewcommand{\sc}{$551$}
\renewcommand{\sd}{$299$}
\renewcommand{\se}{$301$}
\renewcommand{\sf}{$145$}
\renewcommand{\sg}{$181$}
\renewcommand{\sh}{$313$}
\renewcommand{\si}{$335$}
\renewcommand{\sj}{$563$}
\renewcommand{\sk}{$575$}
\renewcommand{\sl}{$179$}
\renewcommand{\ta}{$168$}
\renewcommand{\tb}{$540$}
\renewcommand{\tc}{$552$}
\renewcommand{\td}{$300$}
\renewcommand{\te}{$302$}
\renewcommand{\tf}{$146$}
\renewcommand{\tg}{$182$}
\renewcommand{\th}{$314$}
\renewcommand{\ti}{$336$}
\renewcommand{\tj}{$564$}
\renewcommand{\tk}{$576$}
\renewcommand{\tl}{$180$}
\renewcommand{\ua}{$71$}
\renewcommand{\ub}{$443$}
\renewcommand{\uc}{$455$}
\renewcommand{\ud}{$347$}
\renewcommand{\ue}{$359$}
\renewcommand{\uf}{$59$}
\renewcommand{\ug}{$95$}
\renewcommand{\uh}{$371$}
\renewcommand{\ui}{$383$}
\renewcommand{\uj}{$467$}
\renewcommand{\uk}{$479$}
\renewcommand{\ul}{$83$}
\renewcommand{\va}{$72$}
\renewcommand{\vb}{$444$}
\renewcommand{\vc}{$456$}
\renewcommand{\vd}{$348$}
\renewcommand{\ve}{$360$}
\renewcommand{\vf}{$60$}
\renewcommand{\vg}{$96$}
\renewcommand{\vh}{$372$}
\renewcommand{\vi}{$384$}
\renewcommand{\vj}{$468$}
\renewcommand{\vk}{$480$}
\renewcommand{\vl}{$84$}

\scalebox{.45 }{\input{example4a.pdf_t}}\hspace*{5mm}
\scalebox{.45 }{\input{example4b.pdf_t}}\\[3mm]
\scalebox{.45 }{\input{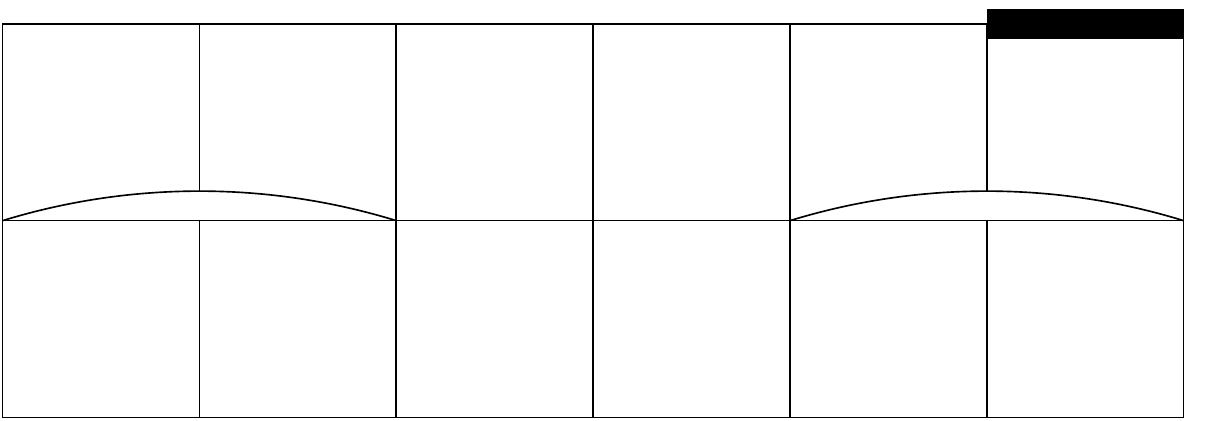_t}}\hspace*{5mm}
\scalebox{.45 }{\input{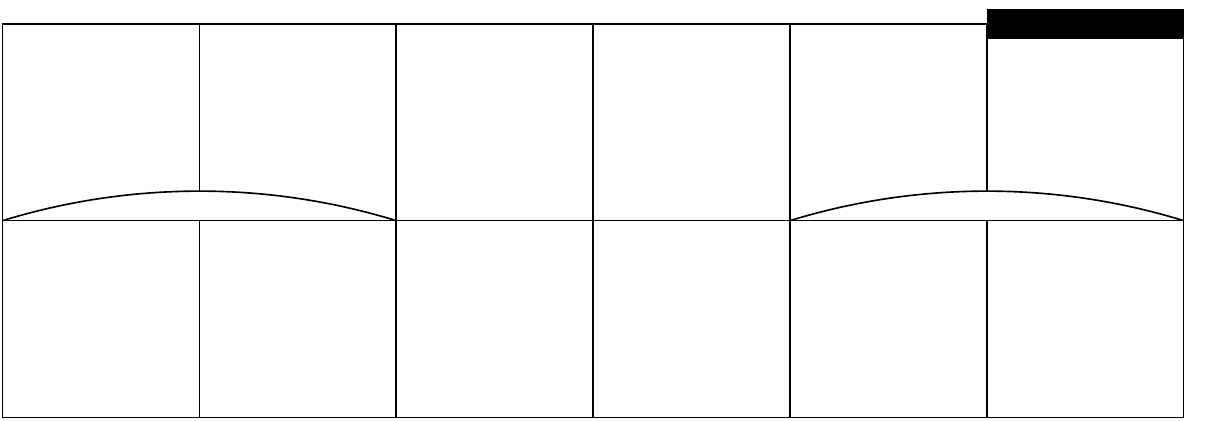_t}}\\[3mm]
\renewcommand{\sa}{$407$}
\renewcommand{\sb}{$491$}
\renewcommand{\sc}{$503$}
\renewcommand{\sd}{$107$}
\renewcommand{\se}{$119$}
\renewcommand{\sf}{$395$}
\renewcommand{\sg}{$431$}
\renewcommand{\sh}{$131$}
\renewcommand{\si}{$143$}
\renewcommand{\sj}{$515$}
\renewcommand{\sk}{$527$}
\renewcommand{\sl}{$419$}
\renewcommand{\ta}{$408$}
\renewcommand{\tb}{$492$}
\renewcommand{\tc}{$504$}
\renewcommand{\td}{$108$}
\renewcommand{\te}{$120$}
\renewcommand{\tf}{$396$}
\renewcommand{\tg}{$432$}
\renewcommand{\th}{$132$}
\renewcommand{\ti}{$144$}
\renewcommand{\tj}{$516$}
\renewcommand{\tk}{$528$}
\renewcommand{\tl}{$420$}
\renewcommand{\ua}{$263$}
\renewcommand{\ub}{$11$}
\renewcommand{\uc}{$23$}
\renewcommand{\ud}{$203$}
\renewcommand{\ue}{$205$}
\renewcommand{\uf}{$241$}
\renewcommand{\ug}{$277$}
\renewcommand{\uh}{$217$}
\renewcommand{\ui}{$239$}
\renewcommand{\uj}{$35$}
\renewcommand{\uk}{$47$}
\renewcommand{\ul}{$275$}
\renewcommand{\va}{$264$}
\renewcommand{\vb}{$12$}
\renewcommand{\vc}{$24$}
\renewcommand{\vd}{$204$}
\renewcommand{\ve}{$206$}
\renewcommand{\vf}{$242$}
\renewcommand{\vg}{$278$}
\renewcommand{\vh}{$218$}
\renewcommand{\vi}{$240$}
\renewcommand{\vj}{$36$}
\renewcommand{\vk}{$48$}
\renewcommand{\vl}{$276$}

\scalebox{.45 }{\input{example4a.pdf_t}}\hspace*{5mm}
\scalebox{.45 }{\input{example4b.pdf_t}}\\[3mm]
\scalebox{.45 }{\input{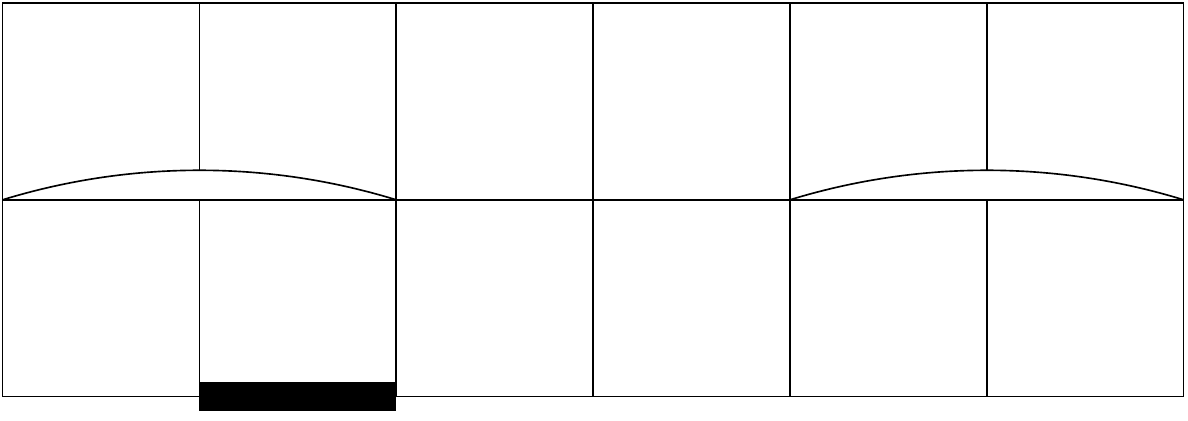_t}}\hspace*{5mm}
\scalebox{.45 }{\input{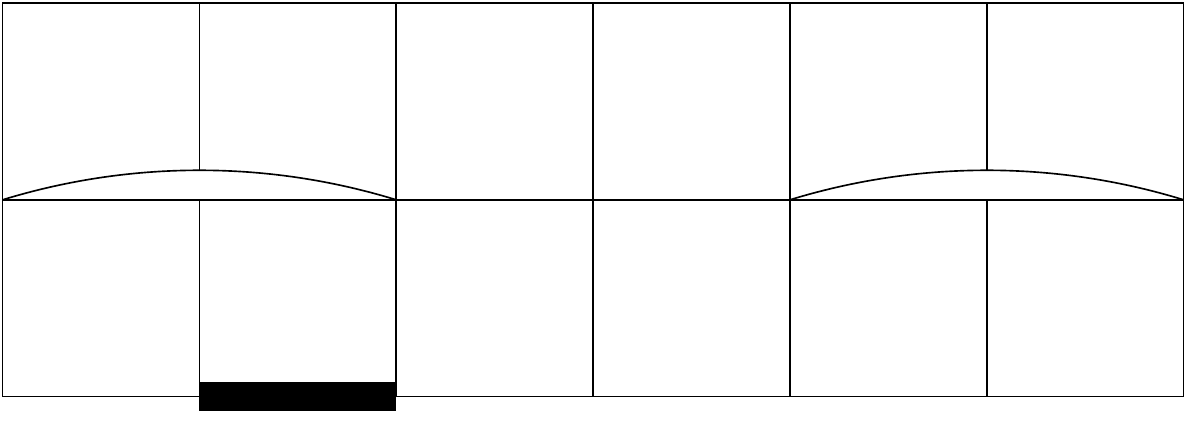_t}}
\hspace* {3mm}{\fs $j = 6$}
\captionof{figure}{The origami $Z$. Change at the black bars
from the left to the right leaf and vice versa. See Corollary~\ref{c.origamiZ3}
and Figure~\ref{yhorizontal} for the gluing rules.
\label{picture}}


\begin{thebibliography}{99}


\bibitem{AHS} J.~Ad\'amek, H.~Herrlich and G.~Strecker,
  \emph{Abstract and concrete categories : the joy of cats},
  Pure and applied mathematics, A Wiley-Interscience publication,
  Wiley New York (1990).

\bibitem{AG} A.~Avila and S.~Gou\"ezel, \emph{Small eigenvalues of the Laplacian for algebraic measures in moduli space, 
and mixing properties of the Teichm\"uller flow}, Preprint 2010 available at http://arxiv.org/abs/1011.5472

\bibitem{AGY} A.~Avila, S.~Gou\"ezel and J.-C.~Yoccoz, \emph{Exponential mixing for the Teichm\"uller flow}, 
Publ. Math. IHES, 143-211, 2006. 



\bibitem{Bergeron} N. ~Bergeron,
\emph{Le spectre de surfaces hyperboliques},  
Savoirs Actuels (Les Ulis). EDP Sciences, Les Ulis; CNRS \'Editions, Paris, 2011. x+338 pp.

\bibitem{B} P. ~Buser,
\emph{A note on the isoperimetric inequality},  
Ann. Sci. \'Ec. Norm. Sup.  15  (1982), no. 2, 213-230.

\bibitem{em} J.~Ellenberg and D.~Mc Reynolds,
\emph{Arithmetic Veech sublattices of} $SL(2,\mathbb{Z})$, Duke Math. J. 161 (2012), 415-429. 

\bibitem{EsM} A.~Eskin and M.~Mirzakhani, \emph{Invariant and stationary measures for the $SL(2,\mathbb{R})$ action on moduli space}, Preprint 2012 available at http://www.math.uchicago.edu/$\sim$eskin/measures.pdf.

\bibitem{GJ} E.~Gutkin and C.~Judge, \emph{Affine mappings of translation surfaces: geometry and arithmetic}, Duke Math. J. 103 (2000), 191-213. 

\bibitem{HL} P.~Hubert and S.~Leli\`evre, 
  \emph{Prime arithmetic Teichm\"uller discs in $\mathcal{H}(2)$},
  Israel Journal of Mathematics 151 (2006), 281-321.

\bibitem{HuSc} P.~Hubert and T.~Schmidt, \emph{An introduction to Veech surfaces}, Handbook of dynamical systems. Vol. 1B, 501-526, Elsevier B. V., Amsterdam, 2006. 

\bibitem{KZ} M.~Kontsevich and A.~Zorich, \emph{Connected components of
the moduli spaces of Abelian differentials with prescribed
singularities}, Invent. Math. 153 (2003), no. 3, 631-678.

\bibitem{L} A.~Lubotzky,
\emph{Discrete Groups, Expanding Graphs and Invariant Measures}, with an appendix by Jonathan D. Rogawski. Progress in Mathematics, 125. Birkh\"user Verlag, Basel, 1994. xii+195 pp. ISBN: 3-7643-5075-X

\bibitem{Masur1} H.~Masur, \emph{Interval exchange transformations and measured foliations}, 
Annals of Math. 115 (1982), 169-200.

\bibitem{Moeller} M.~M\"oller, \emph{Teichmueller curves, Galois actions and GT-relations},
  Math. Nachrichten 278 No. 9 (2005), 1061-1077. 


\bibitem{FlorianDiss} F.~Nisbach,  \emph{The Galois action on Origami curves
  and a special class of Origamis}, Dissertation 2011, Karlsruhe Institute of Technology,
  available at http://digbib.ubka.uni-karlsruhe.de/volltexte/1000025252 

\bibitem{Rt} M.~Ratner, \emph{The rate of mixing for geodesic and horocycle flows},
Ergodic Theory Dynam. Systems 7 (1987), no. 2, 267-288.

\bibitem{Sc1} G. Schmith\"usen, {\em Veech groups of origamis}, Dissertation 2005, Universit\"at Karlsruhe.

\bibitem{Sc2} \bysame, {\em An algorithm for Finding the Veech Group of an Origami},
Experimental Mathematics 13 (2004), no. 4, 459-472.

\bibitem{Sel} A.~Selberg, \emph{On the estimation of Fourier coefficients of modular forms}, 
1965 Proc. Sympos. Pure Math., Vol. VIII pp. 1-15 Amer. Math. Soc., Providence, R.I.

\bibitem{Veech1} W.~Veech, \emph{Gauss measures for transformations on the space of interval exchange maps}, 
Annals of Math. 115 (1982), 201-242.

\bibitem{Veech2} \bysame, \emph{Teichm\"uller Geodesic Flow}, Annals
of Math. 124 (1986), 441-530.

\bibitem{Zorich} A.~Zorich, \emph{Flat surfaces}, Frontiers in number theory, physics, and geometry. I, 437-583, Springer, Berlin, 2006.

\end{thebibliography}
\end{document}